\theoremstyle{definition}
\newtheorem{definition}{Definition}[section]
\newtheorem{example}[definition]{Example}
\newtheorem{remark}[definition]{Remark}
\theoremstyle{plain}
\newtheorem{theorem}[definition]{Theorem}
\newtheorem{proposition}[definition]{Proposition}
\newtheorem{lemma}[definition]{Lemma}
\newtheorem{corollary}[definition]{Corollary}
\numberwithin{equation}{section}
\def \alt96 {`}
\def \RN {\mathds{R}^N}
\def \R {\mathds{R}}
\def \loc {\mathrm{loc}}
\def \meas {\mathrm{meas}}
\def \N {\mathds{N}}
\def \G {\mathbb{G}}
\def \LieG {\mathrm{Lie}(\G)}
\def \dela {{\delta_\lambda}}
\def \LL {{\mathcal{L}}}
\def \Ht {\mathcal{H}}
\def \d {\mathrm{d}}
\def \de {\partial}
\def \meas {\mathrm{meas}}
\def \longto {\longrightarrow}
\def \txt {\textstyle}
\def \LL {\mathcal{L}}
\begin{document}
  \author{Stefano Biagi}
 \address{Stefano Biagi: Dipartimento di Ingegneria Industriale e Scienze Matematiche,
 Università Po\-li\-tec\-ni\-ca delle Marche,  via Brecce Bianche, 12, I-60131 Ancona, Italy.}
 \email{s.biagi@dipmat.univpm.it}
  \author{Andrea Bonfiglioli}
 \address{Andrea Bonfiglioli: Dipartimento di Matematica, Università di Bologna,
 Piazza Porta San Donato, 5, I-40126 Bologna, Italy.}
 \email{andrea.bonfiglioli6@unibo.it}
 \title[Heat kernels for homogeneous H\"ormander PDOs]{Global Heat Kernels for \\ Parabolic Homogeneous H\"ormander Operators} 
 
 \begin{abstract}
 The aim of this paper is to prove the existence and several selected properties
 of a global fundamental Heat kernel $\Gamma$ for the parabolic operators $\Ht=\sum_{j=1}^m
 X_j^2-\de_t$, where $X_1,\ldots,X_m$
  are smooth vector fields on $\R^n$ satisfying H\"ormander's
  rank condition, and enjoying a suitable homogeneity assumption with respect to a family of non-isotropic
  dilations.
  The proof of the existence of $\Gamma$ is based on a (algebraic) global lifting technique, together
  with a re\-pre\-sen\-ta\-tion of  $\Gamma$ in terms of the integral (performed over the lifting variables)
  of the Heat kernel for the Heat operator associated with a suitable sub-Laplacian
  on a homogeneous Carnot group.
  Among the features of $\Gamma$ we prove:
  homogeneity and symmetry properties; summability properties;
  its va\-ni\-shing at infinity; the uniqueness of the bounded solutions of the related Cauchy problem;
  reproduction and density properties; an integral re\-pre\-sen\-ta\-tion
  for the higher-order derivatives. \medskip
  
  \noindent \textbf{Keywords} Parabolic operators;
  Fundamental solution; Heat kernel;
  Lifting technique; Cauchy problem;
  Integral representation of solutions;
  Degenerate-elliptic operators. \medskip
  
 \noindent \textbf{Mathematics Subject Classification (2010)} 
 35K65; 35K08; 35A08; 35C15; 35J70.
\end{abstract}
\maketitle
\section{Introduction}\label{sec:intro}
 Given a certain class of H\"ormander PDOs (Partial Differential Operators, here and throughout),
 the availability of some integral representation formulas
 for an associated global fundamental solution $\Gamma$ and for its derivatives in terms of
 well-behaved kernels defined on richer higher dimensional structures (such as homogeneous Carnot groups)
 can lead to global pointwise estimates of $\Gamma$ and of its derivatives, only via very general results on the geometry of H\"ormander operators;
 see e.g., the recent investigation \cite{BiBoBra}.
 A considerable amount
 of work needs to be accomplished in order to obtain both the existence of a global $\Gamma$
 and of well-behaved representation formulas, as shown in \cite{BiagiBonfiglioliLast}.

 The aim of the present study is to accomplish this work for a class of Heat-type evolution PDOs not contained
 in the stationary case faced in \cite{BiagiBonfiglioliLast}.
 As the approach in the latter paper proved fruitful, we shall try to adapt
 some ideas therein contained to the evolutive case; this programme is
 complicated by the preliminary need for a Gaussian behavior of the lifted Heat kernels
  (see e.g., \cite{JerisonSanchezCalle,KusuokaStroock1,KusuokaStroock2,VaropoulosSaloffCosteCoulhon}).\medskip

 On the other hand, the parabolic setting features interesting problems, such as the study of the initial Cauchy problem,
 and the richer properties of the associated potentials. On the horizon of the present work, we expect to
 investigate Gaussian pointwise estimates of the Heat kernel here constructed.\medskip

 To be more explicit, the aim of this paper is to prove the existence of a well-behaved
  global fundamental solution $\Gamma$ (also referred to as a Heat kernel) for the
  (degenerate) evolution Heat-ty\-pe PDOs  $\Ht$ of the form
\begin{equation}\label{Hiniziale}
  \Ht = \sum_{j = 1}^mX_j^2
  - \frac{\de}{\de t} \qquad \text{on $\R^{1+n} = \R_t\times\R^n_x$},
\end{equation}
  where $X_1,\ldots,X_m$
  are smooth vector fields on $\R^n_x$ satisfying H\"ormander's
  rank condition in space $\R^n_x$, and enjoying a suitable homogeneity assumption w.r.t.\,a family of non-isotropic
  dilations, which we shall describe subsequently.
  Our approach is two-fold: it relies on a (algebraic) global `\emph{lifting}' procedure, and on an integral `\emph{saturation}' technique.
 Roughly put, we construct a lifting operator $\widetilde{\Ht}$ for $\Ht$ of the form
\begin{equation}\label{Hiniziale2}
 \widetilde{\Ht} = \sum_{j = 1}^m \Big(X_j(x)+R_j(x,\xi)\Big)^2
  - \frac{\de}{\de t} \qquad \text{on $\R^{1+n+p} = \R_t\times\R^n_x\times \R^p_\xi$},
\end{equation}
 where $R_1(x,\xi),\ldots,R_m(x,\xi)$ are vector fields operating
 only in the variables $\xi=(\xi_1,\ldots,\xi_p)$ (with coefficients possibly depending on
 $(x,\xi)\in \R^n\times\R^p$), in such a way that
 the existence of a  \emph{global} (i.e., defined throughout $\R^{1+n+p}$) fundamental solution $\widetilde{\Gamma}$
 for $\widetilde{\Ht}$
 be ensured. Then, we want to redeem a fun\-da\-men\-tal solution
 $\Gamma$ for $\Ht$ by integrating $\widetilde{\Gamma}$ over the lifting variables
 $\xi\in \R^p$; to this end, it is necessary to know that
 $\widetilde{\Gamma}$ be globally integrable w.r.t.\,$\xi\in \R^p$, which is one of the crucial points
 of our approach.
 We refer to this integration procedure as a `saturation' argument.

 In the analysis of fundamental solutions for linear PDOs, the idea of passing through a lifting procedure
 and a saturation of the lifting variables is certainly not new, and it traces back to
 Rothschild and Stein's pivotal paper \cite{RothschildStein} (see also Nagel, Stein, Wainger \cite{NSW}); however,
 Rothschild and Stein's lifting is a local tool, whereas, as we stressed, we need a global
 technique since we aim to obtain fundamental solutions defined on the whole space (and vanishing at infinity).
 Global integrability (at infinity) over the saturation variables is a non-trivial fact.
 We shall describe in a moment how we face these problems. Incidentally, we observe that
 in \cite{RothschildStein} only suitable parametrices of a fundamental solution are studied, which again reflects the local/approximation
 nature of the lifting in \cite{RothschildStein}.\medskip

 The basic idea of obtaining
 fundamental solutions for Heat-type o\-pe\-ra\-tors via saturation
 ar\-gu\-ments is very well described in the Euclidean setting.
  Indeed, it is well known that
  a global fundamental solution (with pole at the origin of $\R^{1+n}$) for the classical
  Heat operator $\Ht_n := \Delta_n - \de/\de t$ on $\R^{1+n}$ is given by
  (we use the notation $\chi_A$ for the indicator
  function of a set $A$):
  $$\Gamma_n(t,x) = \chi_{(0,\infty)}(t)\,
  \frac{1}{(4\,\pi\,t)^{n/2}}\,\exp\left(-\frac{\sum_{j=1}^n x_j^2}{4\,t}\right), \qquad
  (t,x)\in\R\times\R^n.$$
  Then, if we consider the Heat operator $\mathcal{H}_{n+p}$  on $\R^{1+n+p}$ and if we integrate
  its fundamental solution $\Gamma_{n+p}$ (with pole at the origin of $\R^{1+n+p}$) with respect to the last
  $p$ variables, we obtain (upon the trivial fact $\int_\R \exp(-\frac{\xi^2}{4t})\,\d\xi=\sqrt{4\pi t}\,$)
  \begin{align*}
   & \int_{\R^p}\Gamma_{n+p}(t,x,\xi)\,\d\xi \\
   & \quad = \chi_{(0,\infty)}(t)\,
  \frac{1}{(4\pi\,t)^{(n+p)/2}}\,
  \exp\left(-\frac{\sum_{j=1}^n x_j^2}{4\,t}\right)\int_{\R^p}
  \exp\left(-\frac{\sum_{j=1}^p \xi_j^2}{4\,t}\right)\,\d \xi \\
  & \quad = \chi_{(0,\infty)}(t)\,
  \frac{1}{(4\,\pi\,t)^{n/2}}\,
  \exp\left(-\frac{\sum_{j=1}^n x_j^2}{4\,t}\right) = \Gamma_n(t,x).
  \end{align*}
  In other words, the Heat kernel $\Gamma_{n}$ of
  $\Ht_{n}$ can be recovered by the Heat kernel $\Gamma_{n+p}$ of $\Ht_{n+p}$
   by a saturation technique:
  $$\Gamma_n(t,x)=  \int_{\R^p}\Gamma_{n+p}(t,x,\xi)\,\d\xi, \qquad
  (t,x)\in\R\times\R^n. $$
  A global lifting/saturation process may likely occur in other interesting cases (for non-elliptic
   o\-pe\-ra\-tors):
  see e.g., Bauer, Furutani, Iwasaki \cite{BauerFurutaniIwasaki};
  Calin, Chang, Furutani, Iwasaki \cite[Sect. 10.3]{CalinChangFurutaniIwasaki};
  Beals, Gaveau, Greiner, Kannai \cite{BealsGaveauGreinerKannai2}.
  Explicit formulas for some Heat kernels on nilpotent Lie
  groups can be found in:
  Agrachev, Boscain, Gauthier, Rossi
  \cite{AgrachevBoscainGauthierRossi};
   Beals, Gaveau, Greiner
  \cite{BealsGaveauGreiner1,BealsGaveauGreiner2};
  Boscain, Gauthier, Rossi
  \cite{BoscainGauthierRossi};
   Cygan \cite{Cygan};
   Furutani \cite{Furutani};
   Gaveau \cite{Gaveau}.\medskip

  The same process was exploited in the paper \cite{BiagiBonfiglioliLast}, which
  provides some general structural assumptions
  showing when lifting/saturation can be suc\-ces\-sful\-ly applied (see Theorem \ref{exTheoremA}).
  We fix once and for all the definition
  of a lifting of a PDO $P$, while postponing
  the precise notion
  of a global fundamental solution $\Gamma$ to Theorem \ref{mainteo};
  for the time being, by $\Gamma$ we mean a function of two variables $(z;\zeta)\in \R^{1+n}\times\R^{1+n}$
  (the first of which is called the `pole')
  such that, for any fixed pole $z$, we have $P(\Gamma(z;\cdot))=-\mathrm{Dir}_z$ in the
  weak sense of distributions ($\mathrm{Dir}_z$ is the Dirac mass at $z$).

  In order to distinguish it from the local Rothschild and Stein's lifting technique,
  we define a simpler notion of the lifting of $P$ as follows: if $P$ is a smooth linear PDO on $\R^{1+n}_z$, we say that
  the PDO $\widetilde{P}$ defined on $\R^{1+n}_z\times\R^p_\xi$  is a lifting of $P$
  (or simply that $\widetilde{P}$ lifts $P$) if:
  \begin{itemize}
    \item $\widetilde{P}$ has smooth coefficients, possibly
    depending on $(z,\xi)\in\R^{1+n}\times\R^p$;
    \item for every fixed $f \in C^\infty(\R_z^{1+n})$, one has
  \begin{equation}\label{sec.one:def_eqLliftok}
    \widetilde{P}(f\circ \pi)(z,\xi) = (P f)(z),
    \quad \text{for every
    $(z,\xi) \in \R^{1+n}\times\R^p$},
  \end{equation}
   where $\pi(z,\xi)=z$ is the canonical
   projection of $\R^{1+n}\times\R^p$ onto $\R^{1+n}$.
   \end{itemize}
 For example, with this definition, $\widetilde{\Ht}$
 in \eqref{Hiniziale2} is a lifting of $\Ht$ in \eqref{Hiniziale}.

 In general, the idea of obtaining a fundamental solution $\Gamma$ for $P$ via a fundamental
 solution $\widetilde{\Gamma}$ for $\widetilde{P}$ by integration over the lifting $\R_\xi^p$-variables
 is natural but subtle, as we now describe. Let us start by writing down the definition of the distributional identity
\begin{equation}\label{distriheur}
 \widetilde{P}\Big\{(\zeta,\eta)\mapsto\widetilde{\Gamma}\Big((z,\xi);(\zeta,\eta)\Big)\Big\}=-\mathrm{Dir}_{(z,\xi)},
\end{equation}
 by first conveniently freezing the variable $\xi$ at $0\in \R^p$: this boils down to the identity (valid for every $\psi\in C_0^\infty(\R^{1+n+p})$ and every $(z,0)\in \R^{1+n+p}$)
\begin{equation}\label{distriheureee.EQ1}
 \int_{\R^{1+n}}\d \zeta \int_{\R^p}\d \eta\,\, \widetilde{\Gamma}\Big((z,0);(\zeta,\eta)\Big)\,
 \widetilde{P}^*\big(\psi(\zeta,\eta)\big)
  = -\psi(z,0).
\end{equation}
 Then, we aim to recover a fundamental solution $\Gamma$ for $P$ starting from identity \eqref{distriheureee.EQ1} in the most direct way, if possible.
 To this end, it seems appropriate to define $\Gamma$ by the inner $\eta$-integral in \eqref{distriheureee.EQ1}, that is
\begin{equation} \label{sec.one:mainThm_defGamma}
    \Gamma(z;\zeta) := \int_{\R^p}\widetilde{\Gamma}\Big((z,0);(\zeta,\eta)\Big)\,\d\eta \qquad (\text{for $z\neq \zeta$ in $\R^{1+n}$}).
\end{equation}
 If in \eqref{distriheureee.EQ1} we were allowed to take as a test function $\psi$ any function of the form $\varphi(z)$ in $C_0^\infty(\R^{1+n})$,
 then \eqref{distriheureee.EQ1} would easily prove that $\Gamma$ is a fundamental solution of $P$,
 in view of the fact that $\widetilde{P}(\varphi\circ \pi)=P\varphi$.
 Unfortunately,
 a test function $\varphi(z)$ on $\R^{1+n}$
 does not become a test function $\psi$ on $\R^{1+n+p}$ by simply con\-si\-de\-ring
 $\psi=\varphi\circ \pi$ (where $\pi$
 is the projection in \eqref{sec.one:def_eqLliftok}).

 A more promising procedure (still based on \eqref{distriheureee.EQ1}) is the ``product-like'' choice
 $$\psi(z,\xi)=\varphi(z)\,\theta_j(\xi),$$
 where $\theta_j\in C_0^\infty(\R_\xi^p)$ is such that
 $\theta_j\to 1$ as $j\to\infty$: indeed, one may formally let $j\to\infty$
  in the following identity (resulting from \eqref{distriheureee.EQ1} with this choice of $\psi$)
\begin{equation}\label{passtothelimit}
 \int_{\R^{1+n}}\d \zeta \int_{\R^p}\d \eta\,\, \widetilde{\Gamma}\Big((z,0);(\zeta,\eta)\Big)\,\widetilde{P}^*\big(\varphi(\zeta)\,\theta_j(\eta)\big)
  = -\varphi(z)\,\theta_j(0),
\end{equation}
  with the hope that, when $j\to\infty$ (by again exploiting the fact that $\widetilde{P}$ lifts $P$), this may lead to
  $$\int_{\R^{1+n}} \bigg(\int_{\R^p}\widetilde{\Gamma}\Big((z,0);(\zeta,\eta)\Big)\,\d \eta\bigg)\,P^* \varphi(\zeta)\,\d \zeta= -\varphi(z).$$
  In the end, the latter identity would produce the fact that the function $\Gamma$ in \eqref{sec.one:mainThm_defGamma}
  is indeed a global fundamental solution for $P$.\medskip

 In order to make this argument more than heuristic, it appears that some a priori
 assumptions must be conveniently made, namely:
\begin{itemize}
  \item we need to know that $\Gamma$ in \eqref{sec.one:mainThm_defGamma} is well posed as a convergent integral;
  we also need to know some summability properties of $\Gamma$ (implicit in the definition of a fundamental solution, see Section \ref{sec.existGamma});

  \item
  some structural and growth assumptions
 on the formal adjoint of the ``remainder'' operator $R:=\widetilde{P}-P$ (which operates on the lifting variables $\xi$ only)
 should be conveniently made to rigorously pass to the limit in
 \eqref{passtothelimit}.
\end{itemize}
 This discussion fully motivates the technical assumptions that we shall make in the saturation Theorem \ref{exTheoremA}, postponed to the next section.\medskip

 It is now time to describe in details the assumptions on the vector fields $X_j$
 in \eqref{Hiniziale}.
  Let $X=\{X_1,\ldots,X_m\}$
  be a set of smooth and linearly independent\footnote{The linear
  independence of $X_1,\ldots,X_m$ is meant in the vector space $\mathcal{X}(\R^n)$
  of the smooth vector fields on $\R^n$, and it must not be confused
  with the linear independence of the \emph{(tangent) vectors} $X_1(x),\ldots,X_m(x)$;
  for example, the Grushin vector fields in $\R^2$ defined by $X_1=\de_{x_1}$ and
  $X_2=x_1\de_{x_2}$ are linearly independent in $\mathcal{X}(\R^2)$, despite
  the vectors of $\R^2$ given by $X_1(x)\equiv (1,0)$ and $X_2(x)\equiv (0,x_1)$
  are dependent when $x_1=0$.}
  vector fields on $\R^n$ satisfying the following
  assumptions:
  \begin{itemize}
    \item[(H.1)]
 there exists a family of
  (non-isotropic) dilations $\{\delta_{\lambda}\}_{\lambda > 0}$
  of the form
 $$\dela:\R^n\longto\R^n, \qquad \dela(x) = (\lambda^{\sigma_1}x_1,\ldots,
 \lambda^{\sigma_n}x_n), $$
 where $1 = \sigma_1\leq \ldots \leq \sigma_n$ are real numbers, such that
   $X_1,\ldots,X_m$ are $\delta_\lambda$-homogeneous of degree
  $1$, i.e.,
  $$X_j(f\circ \dela)=\lambda\,(X_jf)\circ \dela,\quad
  \forall\,\,\lambda>0,\,\,f\in C^\infty(\R^n),\,\, j=1,\ldots,m; $$
     \item[(H.2)]
  the set $X$ satisfies H\"ormander's rank
  condition at $0$, i.e.,
  $$\dim\big\{Y(0) : Y \in \mathrm{Lie}\{X\}\big\} = n.$$
  \end{itemize}
 By $\mathrm{Lie}\{X\}$ we mean the smallest Lie sub-algebra
 of the smooth vector fields $\mathcal{X}(\R^n)$ on $\R^n$ containing $X$.
 Here $\mathcal{X}(\R^n)$ is equipped with its obvious structures of vector space and of Lie algebra.
  \begin{remark} \label{sec.two_1:remHormander}
   It is not difficult to show that, since $X_1,\ldots,X_m$
   are $\dela$-ho\-mo\-ge\-ne\-ous of degree $1$, the validity
   of H\"ormander's rank condition at $0$ implies the validity of the latter at any $x\in\R^n$, and that
   $n\leq \mathrm{dim}(\mathrm{Lie}\{X\})<\infty$.

  Thus, the H\"ormander parabolic operator
    $\Ht$ in \eqref{Hiniziale} is $C^\infty$-hypoelliptic on every open subset
    of $\R^{1+n}$.
    Moreover, $\Ht$ satisfies the Weak Maximum Principle
    on every bounded open subset of $\R^{1+n}$: this follows
    from (H.1)-(H.2), as is proved in \cite[Sect.\,8.4]{BiagiBonfBook}.
  \end{remark}

  The following
  result is relevant for our purposes, and it can be proved starting from 
  \cite[Thm.\,1.4]{BiagiBonfiglioliCCM} and \cite[Thm\,3.1]{BiagiBonfiglioliLast}. We refer to
  \cite[\S 1.4]{BLUlibro}
  for the
  notions of sub-Laplacian and of \emph{homogeneous\footnote{Essentially, this is a triple $(\RN,\star,D_\lambda)$ of a Lie group
  $(\RN,\star)$ and a family of dilations $D_\lambda$ which are group automorphisms.} Carnot group on $\RN$}.
  \begin{theorem}\label{exTheoremB}
   Assume that $X = \{X_1,\ldots,X_m\}$ 
   satisfies the above assumptions \emph{(H.1)} and \emph{(H.2)}.
   Moreover, let $N=\mathrm{dim}(\mathrm{Lie}\{X\})$.
   Then the following facts hold:\medskip

 \emph{1.} If $N=n$, there exists a homogeneous Carnot group
   $\G$ \emph{(}with underlying manifold $\R^n$ and the same dilations $\dela$ as in
   \emph{(H.1)}\emph{)} such that
   $X$ is a system of
   Lie-generators of $\mathrm{Lie}(\G)$; hence
   $\LL:=\sum\nolimits_{j = 1}^m X_j^2$
   is a sub-Laplacian on $\G$.\medskip

 \emph{2.} If $N>n$, there exist a homogeneous Carnot group
   $\G$ \emph{(}with underlying manifold $\R^N$\emph{)} and a system $\{Z_1,\ldots,Z_m\}$ of
   Lie-generators of $\mathrm{Lie}(\G)$ such
   that  $Z_i$ is a lifting of $X_i$ for every $i = 1,\ldots,m$
   \emph{(}in the previously defined sense\emph{)};
   hence the sub-Laplacian $\sum_{j = 1}^mZ_j^2$
   is a lifting of $\LL$.
%
\end{theorem}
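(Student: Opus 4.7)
The theorem follows by first extracting the stratified Lie-algebraic structure of $\g := \Lie\{X\}$ from assumption (H.1), and then dispatching the two cases to the previously established constructions cited in the statement.

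To begin, I would show that $\g$ is a stratified Lie algebra whose first layer is $\Span\{X_1,\ldots,X_m\}$. Since each $X_j$ is $\dela$-homogeneous of degree $1$, any length-$k$ iterated commutator of the $X_j$'s is $\dela$-homogeneous of degree $k$; writing $V_k$ for the span of such length-$k$ brackets, one sees that the $V_k$ lie in pairwise distinct eigenspaces of the pushforward $(\dela)_*$ and therefore sum directly. Combined with the finite-dimensionality furnished by Remark \ref{sec.two_1:remHormander}, this yields a grading $\g = V_1\oplus\cdots\oplus V_s$ with $V_1 = \Span\{X_1,\ldots,X_m\}$, $[V_1,V_k]=V_{k+1}$, and $V_{s+1}=0$; in particular $\g$ is nilpotent of step $s$ and stratified, with $V_1$ generating $\g$.

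For case 1 ($N=n$), the evaluation map $\g \ni Y \mapsto Y(0) \in \R^n$ is surjective by (H.2) and, since $\dim\g=n$, a linear isomorphism. At this point I would invoke \cite[Thm.\,1.4]{BiagiBonfiglioliCCM}: given precisely this data---a finite-dimensional, $\dela$-graded, stratified Lie sub-algebra of $\mathcal{X}(\R^n)$ whose evaluation at $0$ is a bijection onto $\R^n$---that result transports the group law from $\g$ (via the Campbell--Baker--Hausdorff formula and the evaluation isomorphism) to a Lie group law $\star$ on $\R^n$, making $(\R^n,\star,\dela)$ a homogeneous Carnot group whose first-layer left-invariant generators are precisely $X_1,\ldots,X_m$. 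The canonical sub-Laplacian on $\G$ is then exactly $\LL=\sum_j X_j^2$.

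For case 2 ($N>n$), the evaluation at $0$ has an $(N-n)$-dimensional kernel, which blocks any direct application of case 1 on $\R^n$. Instead I would invoke \cite[Thm.\,3.1]{BiagiBonfiglioliLast}, whose purpose is exactly to compensate for this kernel: under (H.1)--(H.2) it constructs $p=N-n$ auxiliary variables $\xi\in\R^p$, smooth vector fields $R_1,\ldots,R_m$ depending on $(x,\xi)$ and differentiating only in $\xi$, and an extension of the dilations $\dela$ to $\R^N$, such that the fields $Z_i:=X_i(x)+R_i(x,\xi)$ are $\dela$-homogeneous of degree $1$, satisfy the lifting identity \eqref{sec.one:def_eqLliftok} with respect to the projection $\pi(x,\xi)=x$, and generate a Lie algebra of dimension exactly $N$ on $\R^N$ whose evaluation at $0$ is bijective. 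Applying case 1 to $\{Z_1,\ldots,Z_m\}$ then yields the homogeneous Carnot group $\G$ on $\R^N$ with $Z_1,\ldots,Z_m$ as first-layer Lie-generators, and $\sum_j Z_j^2$ is a lifting of $\LL$ because each $Z_i$ individually lifts $X_i$. The principal technical obstacle lies entirely in the construction of the $R_j$ with the correct homogeneity, step, and dimension count---this is the substantive content of \cite[Thm.\,3.1]{BiagiBonfiglioliLast}---while everything else is a bookkeeping application of case 1.
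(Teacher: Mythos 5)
Your proposal is correct and follows essentially the same route as the paper, which likewise disposes of case 1 by invoking \cite[Thm.\,1.4]{BiagiBonfiglioliCCM} (plus the observation that homogeneity upgrades the resulting Lie group to a homogeneous Carnot group) and of case 2 by deferring entirely to the global Folland-type lifting of \cite{BiagiBonfiglioliLast}. The one discrepancy is in how you state the input to \cite[Thm.\,1.4]{BiagiBonfiglioliCCM}: the paper's Remark \ref{rem.sulladimenzoinen} verifies analyticity, the H\"ormander condition, and the \emph{completeness} of every $Y\in\mathrm{Lie}\{X\}$ (the genuine dynamical hypothesis of that theorem, itself a consequence of (H.1)), rather than the graded/stratified structure with bijective evaluation at $0$ that you describe--though your stratification argument is correct and is exactly what makes the resulting group a homogeneous Carnot group with dilations $\dela$.
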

 The demonstration of Theorem \ref{exTheoremB} is quite delicate:
  for example, the proof of (2) makes use of the \emph{global} lifting method for homogeneous
  vector fields proved by Folland \cite{Folland2}, a notable refinement
  of the local lifting technique introduced by Rothschild and Stein in \cite{RothschildStein}
  for H\"ormander PDOs: a proof of (2) can be found in \cite{BiagiBonfiglioliLast}.
  As for assertion (1) in Theorem \ref{exTheoremB}, one argues as follows: 
\begin{remark}\label{rem.sulladimenzoinen}
 Consider the following facts: 
\begin{itemize}
  \item $\mathrm{Lie}\{X\}$ is an $n$-dimensional Lie algebra of analytic vector fields in $\R^n$ (analyticity follows from the fact that the $X_j$'s have
  polynomial component functions, due to (H.1));

  \item $X$ is a H\"ormander system, due to (H.1)-(H.2) (see Remark \ref{sec.two_1:remHormander});

  \item any vector field $Y\in \mathrm{Lie}\{X\}$ is complete, i.e., the integral curves of $Y$ are defined on the whole of $\R$
  (this can be proved as a consequence of (H.1)).
\end{itemize}
 Under these three conditions, Theorem 1.4 in \cite{BiagiBonfiglioliCCM} proves that $\mathrm{Lie}\{X\}$ coincides with the Lie algebra
 of a Lie group $\G$ on $\R^n$. As a matter of fact, under assumption (H.1), this Lie group $\G$ turns out to be a homogeneous Carnot group
 with dilations $\dela$ (see e.g., \cite[Chapter 16]{BiagiBonfBook}). Thus (1) follows.
\end{remark}

  All this being said, our aim in this paper is to prove that a saturation/lifting approach
  can be performed for the Heat type operators $\Ht=\sum_{j=1}^m X_j^2-\de_t$, where
  $X_1,\ldots,X_m$ satisfy (H.1) and (H.2). To this end, it is enough to assume that
  $N>n$, since (by Theorem \ref{exTheoremB}-(1)) the case $N=n$ is already known (see Folland, \cite{Folland}).
  When $N>n$ we will obtain the existence of a global fundamental
  solution (also called Heat kernel) $\Gamma$ for $\Ht$ obtained via the saturation formula \eqref{sec.one:mainThm_defGamma},
  taking in this case the following special form
\begin{equation*}
  \Gamma(t,x;s,y) := \int_{\R^p}\Gamma_\G(t,x,0;s,y,\eta)\,\d\eta,
\end{equation*}
 where $\Gamma_\G$ is a fundamental solution
 for the Heat-type operator
 $$\mathcal{H}_\G:=\sum_{j=1}^m Z_j^2-\frac{\de}{\de t}$$
 on the Lie group $\R\times \G$ (here the Carnot group $\G$ and $Z_1,\ldots,Z_m$ are the same as in Theorem \ref{exTheoremB}).
 The existence of $\Gamma_\G$ was proved in \cite{Folland} (see also \cite{BLUpaper}), where
 it was also shown that it takes a group-convolution form; this will lead to the
 even more profitable expression
\begin{equation}\label{defiGammaesplicitinaPRE2}
  \Gamma(t,x;s,y) =\int_{\R^p}\gamma_\G\Big(s-t,(x,0)^{-1}\star(y,\eta)\Big)\,\d\eta,
\end{equation}
   where $\gamma_\G$ is the fundamental solution of  $\mathcal{H}_\G$
  with pole at the origin, and $\star$ is the group law of the Carnot group in Theorem \ref{exTheoremB}-(2).
  In showing that $\Ht$ satisfies the assumptions for the saturation procedure heuristically described above,
  one must also use the global Gaussian estimates of $\gamma_\G$
  (see e.g., Jerison, S\'anchez-Calle \cite{JerisonSanchezCalle};
  Kusuoka, Stroock \cite{KusuokaStroock1,KusuokaStroock2};
  Varopoulos, Saloff-Coste, Coulhon
  \cite{VaropoulosSaloffCosteCoulhon}).\medskip

  Strictly speaking, formula \eqref{defiGammaesplicitinaPRE2}
  does not equip $\Gamma$ with a tran\-sla\-tion\--in\-va\-rian\-ce property, as is shown
  by the Grushin-type example (see e.g., \cite{CalinChangFurutaniIwasaki})
  $$G=\Big(\frac{\de}{\de{x_1}}\Big)^2+\Big(x_1\,\frac{\de}{\de{x_2}}\Big)^2-\frac{\de}{\de t}.$$
  Nonetheless, \eqref{defiGammaesplicitinaPRE2} is a nicely ``hybrid'' expression of
  the fundamental solution of $\Ht$ as an integral of a translation-invariant
  kernel; this expression is indeed worthwhile since we shall derive from it
  plenty of properties of $\Gamma$, as is shown in the following theorem, our main result:
\begin{theorem}[\textbf{Existence and properties of the global Heat-kernels for homogeneous H\"ormander PDOs}]\label{mainteo}
 Let $X$ be a set of smooth vector fields on $\R^n$ satisfying assumptions \emph{(H.1)} and \emph{(H.2)}, and let
 us assume that
 $$N=\mathrm{dim}(\mathrm{Lie}\{X\})>n.$$
  Let $\mathcal{H}$ be the Heat-type operator on $\R^{1+n}$ defined in
  \eqref{Hiniziale}, and let us denote by
  $$\text{$z=(t,x)$ the points of $\R^{1+n}=\R_t\times \R^n_x$}.$$
  Then $\mathcal{H}$ admits a global fundamental solution
  $\Gamma(z;\zeta)$; this means that $\Gamma(z;\zeta)$
  is defined for any $z,\zeta\in \R^{1+n}$ and it satisfies the
  following property: for any $z \in \R^{1+n}$ \emph{(}the pole\emph{)},
  $\Gamma(z;\cdot)$ is in $L^1_{\loc}(\R^{1+n})$ and
\begin{equation*}
    \int_{\R^{1+n}}
    \Gamma(z;\zeta)\,\mathcal{H}^*\varphi(\zeta)\,\d \zeta = -\varphi(z),
    \qquad \text{for every $\varphi\in C_0^{\infty}(\R^{1+n})$},
    \end{equation*}
 where  $\Ht^* = \sum_j X_j^2+\de/\de t$ is the formal adjoint of $\mathcal{H}=\sum_j X_j^2-\de/\de t$.

 More precisely, we take as $\Gamma$ the integral function
\begin{equation}\label{defiGammaesplicitinaPRE1}
  \Gamma(z;\zeta)
  = \Gamma(t,x;s,y) = \int_{\R^p}\gamma_\G\Big(s-t,(x,0)^{-1}\star(y,\eta)\Big)\,\d\eta,
\end{equation}
 where $\gamma_\G$ is the unique
 fundamental solution, with pole at $0$ and vanishing at infinity,
 of the Heat-type operator $\mathcal{H}_\G:=\sum_{j=1}^m Z_j^2-\de/\de t$
 on $\R\times \G$ \emph{(}which is a lifting of $\Ht$\emph{)};
 the Carnot group $\G = (\RN,\star)$
 and the vector fields $Z_1,\ldots,Z_m$ are as in Theorem \ref{exTheoremB}-(2).
 The existence of $\gamma_\G$ is granted by \cite{Folland}.\medskip

 Moreover, $\Gamma$ in \eqref{defiGammaesplicitinaPRE1} also enjoys the following list of properties:
  \begin{itemize}
   \item[{(i)}] $\Gamma\geq 0$ and we have
   $$\text{$\Gamma(t,x;s,y) = 0$ if and only if $s\leq t$}.$$

   \item[{(ii)}]
   We have $\Gamma(t,x;s,y)=\Gamma(-s,x;-t,y)$, and
   $\Gamma$ depends on $t$ and $s$ on\-ly
  through $s-t$:
  $$\Gamma(t,x;s,y) = \Gamma(0,x;s-t,y)=\Gamma(t-s,x;0,y).$$
  Furthermore $\Gamma$ is symmetric in the space
  variables $x$ and $y$, i.e.,
  $$\Gamma(t,x;s,y) = \Gamma(t,y;s,x).$$

   \item[{(iii)}] For every $\lambda > 0$ we have
   $$\Gamma\Big(\lambda^2t,\dela(x);\lambda^2s,\dela(y)\Big) =
   \lambda^{-q}\,\Gamma(t,x;s,y),\quad \text{where $q = \txt\sum_{j = 1}^m \sigma_j$.}$$

   \item[{(iv)}] $\Gamma$ is smooth out of the diagonal
   of $\R^{1+n}\times\R^{1+n}$.

  \item[{(v)}] For every compact set
     $K\subseteq\R^{1+n}$, we have
   $$
    \lim_{\|\zeta\|\to\infty}\bigg(\sup_{z\in K}\Gamma(z;\zeta)\bigg)
    = \lim_{\|\zeta\|\to\infty}\bigg(\sup_{z\in K}\Gamma(\zeta;z)\bigg) = 0.
   $$

   \item[{(vi)}] $\Gamma\in L^1_{\loc}(\R^{1+n}\times\R^{1+n})$
   and, for every fixed $z\in\R^{1+n}$, we have
   $$\Gamma(z;\cdot),\,\,\Gamma(\cdot;z)\in L^1_{\loc}(\R^{1+n}).$$

  \item[{(vii)}] For every
  fixed $(t,x)\in\R^{1+n}$ we have
\begin{equation*}
   \int_{\R^n}\Gamma(t,x;s,y)\,\d y = 1, \quad \text{for every $s > t$}.
\end{equation*}

  \item[{(viii)}] For every fixed
  $\varphi\in C^\infty_0(\R^{1+n})$, the map
   defined by the potential function
  $$
  \R^{1+n}\ni \zeta\mapsto 
  \Lambda_\varphi(\zeta) := \int_{\R^{1+n}}\Gamma(z;\zeta)
  \,\varphi(z)\,\d z$$
  is smooth, it vanishes at infinity and $\Ht(\Lambda_\varphi) = -\varphi$
  on $\R^{1+n}$.

  \item[{(ix)}]
  If $\varphi\in C(\R^n)$ is bounded, then the potential-type function
  $$u(t,x) := \int_{\R^n}\Gamma(0,y;t,x)\,\varphi(y)\,\d y$$
  defined for $(t,x)\in \Omega=(0,\infty)\times \R^n$
  is the unique bounded classical solution of the ho\-mo\-ge\-neo\-us
  Cauchy problem
     \begin{equation*}
    \begin{cases}
     \Ht u = 0 & \text{in $\Omega$} \\
     u(0,x) = \varphi(x) & \text{for $x\in\R^n$.}
    \end{cases}
   \end{equation*}

   \item[{(x)}]
   For every $x,y\in\R^n$ and every $s,t > 0$, we have the reproduction formula
 \begin{equation*}
  \Gamma(0,y;t+s,x) = \int_{\R^n}\Gamma(0,w;t,x)\,\Gamma(0,y;s,w)\,\d w.
 \end{equation*}

  \end{itemize}
  Finally, if we consider the function $\Gamma^*$ defined by
  $$\Gamma^*(t,x;s,y) := \Gamma(s,y;t,x), \quad
  \text{for every $(t,x),(s,y)\in\R^{1+n}$},$$
  then $\Gamma^*$ is a global fundamental solution for
  $\Ht^* =\sum_{j=1}^m X_j^2+\de/\de_t$, satisfying dual
  statements of \emph{(i)}-through-\emph{(x)}.
\end{theorem}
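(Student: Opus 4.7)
The strategy is to verify the hypotheses of the general saturation principle (Theorem \ref{exTheoremA}) for the specific lifting $\widetilde{\Ht}=\Ht_\G$ provided by Theorem \ref{exTheoremB}-(2), and then to read off each listed property of $\Gamma$ from the corresponding property of the group heat kernel $\gamma_\G$, together with the standard Gaussian upper bounds \cite{JerisonSanchezCalle,KusuokaStroock1,VaropoulosSaloffCosteCoulhon}. First, I would invoke Theorem \ref{exTheoremB}-(2) to fix a homogeneous Carnot group $\G=(\R^N,\star)$ and the Lie generators $Z_1,\ldots,Z_m$ lifting $X_1,\ldots,X_m$; writing a generic point of $\G$ as $(x,\xi)\in\R^n\times\R^p$ (with $N=n+p$), the remainder $R_j=Z_j-X_j$ operates only in the $\xi$-variables, with polynomial coefficients that are $\dela$-homogeneous of degree $1$. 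Folland's result \cite{Folland} supplies the global heat kernel $\gamma_\G$ on $\R\times\G$ in group-convolution form, and the Gaussian upper bounds for $\gamma_\G$ show that the integral in \eqref{defiGammaesplicitinaPRE1} converges and that $\Gamma$ is locally integrable on $\R^{1+n}\times\R^{1+n}$, which already yields (vi).

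Next I would verify that $\Gamma$ is a fundamental solution of $\Ht$. Starting from the distributional identity for $\gamma_\G$ with pole at $(0,(x,0))$, tested against the product $\varphi(s,y)\,\theta_j(\eta)$ with $\varphi\in C_0^\infty(\R^{1+n})$ and $\theta_j\in C_0^\infty(\R^p)$ satisfying $\theta_j\nearrow 1$ and $\theta_j(0)=1$, one obtains
\begin{equation*}
  \int_{\R^{1+N}} \gamma_\G\big(s-t,(x,0)^{-1}\star(y,\eta)\big)\,\Ht_\G^*\big(\varphi(s,y)\,\theta_j(\eta)\big)\,\d s\,\d y\,\d\eta = -\varphi(t,x).
\end{equation*}
Since $\Ht_\G$ lifts $\Ht$, the principal contribution of $\Ht_\G^*(\varphi\,\theta_j)$ equals $\theta_j(\eta)\,\Ht^*\varphi(s,y)$ plus a remainder involving derivatives of $\theta_j$. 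The main obstacle of the whole proof is the legitimacy of the passage to the limit $j\to\infty$: the remainder term must be shown to vanish, which hinges on choosing $\theta_j$ with uniformly bounded derivatives supported at increasingly larger $\xi$-scales, and on absorbing the polynomial-in-$\xi$ coefficients of the adjoint of $R_j$ against the Gaussian decay of $\gamma_\G$. This is precisely what the abstract saturation Theorem \ref{exTheoremA} encodes, and it is the technical heart of the argument.

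Once $\Gamma$ is established to be a fundamental solution, most of the listed properties unfold from the integral representation. Property (i) follows from $\gamma_\G\geq 0$ and from $\gamma_\G(\tau,\cdot)\equiv 0$ for $\tau\leq 0$; (ii) uses the well-known symmetries of $\gamma_\G$ on $\G$ combined with the change of variables $\eta\mapsto -\eta$ and its compatibility with $(x,0)^{-1}\star(y,\eta)$; (iii) follows from the $\dela$-homogeneity of $\gamma_\G$ of degree equal to the homogeneous dimension $Q$ of $\G$, the Jacobian of the change of variables $\eta\mapsto\dela(\eta)$ absorbing the exponent $Q-q$ and leaving only the weight $q=\sum_{j=1}^m\sigma_j$ corresponding to the $x$-part; (iv) is the hypoellipticity of $\Ht$ off the diagonal (see Remark \ref{sec.two_1:remHormander}); (v) follows from the Gaussian decay at infinity, locally uniform in the pole; (vii) follows from $\int_\G\gamma_\G(\tau,g)\,\d g=1$ for $\tau>0$ (a known property of the group heat kernel) upon splitting the Haar measure as $\d y\,\d\eta$ and using Fubini.

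Finally, the deeper analytic items follow by standard potential-theoretic arguments, now made rigorous by the previously established regularity of $\Gamma$. Item (viii) is obtained by differentiating under the integral (justified by the Gaussian estimates) and invoking the defining distributional identity. The reproduction formula (x) is inherited from the semigroup identity for $\gamma_\G$, namely $\gamma_\G(\tau_1+\tau_2,g)=\int_\G \gamma_\G(\tau_1,h)\,\gamma_\G(\tau_2,h^{-1}\star g)\,\d h$, after writing $h=(w,\omega)$ and applying Fubini on the $\omega$-variables. Existence in (ix) comes from (i), (vii) and dominated convergence, while uniqueness of bounded classical solutions is the subtlest remaining point: it needs a weak maximum principle on the unbounded strip $(0,T)\times\R^n$ for bounded solutions, which follows from the weak maximum principle on bounded open sets (Remark \ref{sec.two_1:remHormander}) via a standard exhaustion/barrier argument based on the homogeneous norm adapted to $\dela$. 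The dual statements for $\Gamma^*$ follow by applying the whole construction to $\Ht^*$ (which is again a homogeneous H\"ormander operator of the same form, up to the sign of $\de/\de t$) or, equivalently, from the identity $\Gamma^*(z;\zeta)=\Gamma(\zeta;z)$ and the symmetries collected in (ii).
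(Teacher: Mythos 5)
Your overall architecture for the existence part coincides with the paper's: lift via Theorem \ref{exTheoremB}-(2), take Folland's kernel $\gamma_\G$ with its Gaussian bounds, verify (S.1)--(S.2) and the integrability hypotheses of the saturation Theorem \ref{exTheoremA}, and define $\Gamma$ by \eqref{defiGammaesplicitinaPRE1}. One point you pass over too quickly even there: the convergence of the $\eta$-integral is not an immediate consequence of the Gaussian upper bound, because one must first show that $d\big((x,0)^{-1}\star(y,\eta)\big)$ grows in $\eta$ in a quantitative, $x,y$-uniform way. The paper does this through the unit-Jacobian diffeomorphism $\Psi_{x,y}$ of Remark \ref{sec.two_2:remChangeF} and the inequality $h\big((x,0)^{-1}\star(y,\Psi_{x,y}^{-1}(\eta'))\big)\geq N(\eta')$, which converts the Gaussian into the integrable bound $\beta\,N(\eta')^{-Q}$; without some such device the "absorb against the Gaussian" step is not justified.

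The genuine gaps are in the properties. First, the symmetry $\Gamma(t,x;s,y)=\Gamma(t,y;s,x)$ does not follow from the change of variables $\eta\mapsto-\eta$: the two integrands involve $(x,0)^{-1}\star(y,\eta)$ and $(y,0)^{-1}\star(x,\eta)$, and relating them requires either the nontrivial identity $(x,0)^{-1}\star(y,\phi_{x,y}(u))=(x,u)^{-1}\star(y,0)$ for a measure-preserving diffeomorphism $\phi_{x,y}$ (used in the paper only for Theorem \ref{teomainderiv}) combined with $\gamma_\G(\tau,g)=\gamma_\G(\tau,g^{-1})$, or, as the paper actually argues, a proof that $(t,x;s,y)\mapsto\Gamma(t,y;s,x)$ is another fundamental solution of $\Ht$, continuous off the diagonal and vanishing at infinity, followed by the uniqueness statement of Remark \ref{rem.onFSgeneral}-(c). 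Second, for (iv) the hypoellipticity of $\Ht$ acting in $\zeta$ only yields smoothness of $\Gamma(z;\cdot)$ for each fixed pole, not joint smoothness; the paper must introduce the doubled H\"ormander operator $\sum_j X_j^2(x)+\de_t+\sum_j X_j^2(y)-\de_s$ on $\R^{1+n}\times\R^{1+n}$, and this in turn presupposes the continuity of $\Gamma$ off the diagonal and the fact that $\Gamma^*$ is a fundamental solution of $\Ht^*$, both of which require separate (nontrivial) proofs. Third, for the uniqueness in (ix) you propose a barrier/exhaustion argument on the strip, but you never exhibit the required $\Ht$-supersolution growing at infinity; the paper instead lifts a bounded solution $u$ to $v(t,x,\xi):=u(t,x)$, notes $\Ht_\G v=0$, and quotes the known uniqueness theorem for heat operators on Carnot groups. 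Finally, your derivation of (x) from the semigroup identity for $\gamma_\G$ tacitly needs that $\int_{\R^p}\gamma_\G\big(t,(w,\omega)^{-1}\star(x,\eta)\big)\,\d\eta$ is independent of the lifted variable $\omega$, which you do not address; the paper sidesteps this entirely by observing that both sides of the reproduction formula are bounded solutions of the same Cauchy problem with datum $\Gamma(0,y;s,\cdot)$ and invoking (ix).
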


 We observe that there exists at most one fundamental solution $\Gamma$
 of $\Ht$ such that, for any fixed $z\in \R^{1+n}$, it holds that
 $\Gamma(z;\cdot)$ is continuous out of $z$, and $\lim_{\|\zeta\|\to \infty}\Gamma(z;\zeta)=0$ (see Remark \ref{rem.onFSgeneral}-(c)).
 As a consequence (see properties (iv,v) above) the function $\Gamma$ satisfying the properties of Theorem  \ref{mainteo}
 is unique.

\begin{remark}
 Many of the properties (i)-to-(x), albeit not unexpected,
 are based on quite technical arguments made possible
 by the very formula \eqref{defiGammaesplicitinaPRE1}, which therefore
 proves to be fruitful even if it does not furnish
 Gaussian estimates for $\Gamma$ in a simple way.
 From a recent investigation with Marco Bramanti \cite{BiBoBra}, it appears that, in the case of the stationary operator
 $\LL=\sum_{j=1}^m X_j^2$, one can pass from the integral representation analogous to
 \eqref{defiGammaesplicitinaPRE1} to pointwise estimates of the fundamental solution (and of its derivatives)
 in terms of the Carnot-Carath\'eodory
 distance associated with $X_1,\ldots,X_m$: this requires some
 work, also based on results by Nagel, Stein, Wainger \cite{NSW},
 by S\'anchez-Calle \cite{SanchezCalle}, and
 by Bramanti, Brandolini, Manfredini, Pedroni \cite{BBMP}.
 We plan to return to the non-trivial problem of the pointwise Gaussian estimates of the Heat kernel $\Gamma$ in a future investigation.

 We also point out that the techniques of this paper can be used
 in order to obtain \emph{uniform} and global estimates for the
 fundamental solutions of the operators $\sum_{i,j} a_{i,j} X_iX_j-\de/\de t$, as
 the matrix $(a_{i,j})$ ranges over the $m\times m$ symmetric and positive-definite matrices
 satisfying a suitable (uniform) el\-lip\-ti\-ci\-ty condition (see also \cite{BLUpaper}).
 In its turn, we plan to use these uniform estimates to study the
 parametrices for non-constant $a_{i,j}$'s (see also \cite{BLUtrans}).
\end{remark}

 Our integral representation is also sufficiently helpful that it produces analogous
 representations for any higher order derivative, as this theorem shows:
\begin{theorem}[\textbf{Representation of the derivatives of $\Gamma$}]\label{teomainderiv}
 Let the as\-sump\-tions of Theorem \ref{mainteo} hold \emph{(}from which we inherit the notation\emph{)}, and   
 let $\Gamma$ be the fundamental solution of $\Ht$
 in \eqref{defiGammaesplicitinaPRE1}.

  Then, for any
  $\alpha,\beta\in\N\cup\{0\}$, any $h,k\geq 1$ and any
  choice of indexes $i_1,\ldots,i_h,j_1,\ldots,j_k$ in $\{1,\ldots,m\}$, we have the following
  representation formulas \emph{(}holding true for $(t,x)\neq
  (s,y)$ in $\R^{1+n}$\emph{)}, respectively concerning $X$-derivatives in the $y$-variable,
  in the $x$-variable, and in the mixed $(x,y)$-case:
\begin{align}
  & \Big(\frac{\de}{\de s}\Big)^\alpha
  \Big(\frac{\de}{\de t}\Big)^\beta
  X^y_{i_1}\cdots X^y_{i_h}\Gamma(t,x;s,y) \label{eq.derGammays} \\[0.15cm]
  & \quad =
       (-1)^{\beta }\int_{\R^p}
   \bigg(
   \Big(\frac{\de}{\de s}\Big)^\alpha
  \Big(\frac{\de}{\de t}\Big)^\beta {Z}_{i_1}\cdots {Z}_{i_s} \gamma_\G \bigg)\Big(s-t,
   (x,0)^{-1}\star (y,\eta)\Big) \,\d\eta; \nonumber \\[0.3cm]
   & \Big(\frac{\de}{\de s}\Big)^\alpha
  \Big(\frac{\de}{\de t}\Big)^\beta
  X^x_{j_1}\cdots X^x_{j_k}\Gamma(t,x;s,y) \label{eq.derGammatx} \\[0.15cm]
   & \quad =
   (-1)^{\beta }\,\int_{\R^p}
   \bigg(\Big(\frac{\de}{\de s}\Big)^\alpha   \Big(\frac{\de}{\de t}\Big)^\beta
   {Z}_{j_1}\cdots {Z}_{j_k}\gamma_\G \bigg)
   \Big(s-t,(y,0)^{-1}
   \star (x,\eta)\Big)\,
   \d\eta; \nonumber \\[0.3cm]
   & \Big(\frac{\de}{\de s}\Big)^\alpha\,
   \Big(\frac{\de}{\de t}\Big)^\beta\,X^x_{j_1}\cdots X^x_{j_k}X^y_{i_1}\cdots X^y_{i_h}
   \Gamma(t,x;s,y) \label{eq.derGammatutte}  \\[0.15cm]
   &\qquad =
   (-1)^{\beta }\,\int_{\R^p}
   \bigg(\Big(\frac{\de}{\de s}\Big)^\alpha \Big(\frac{\de}{\de t}\Big)^\beta
   {Z}_{j_1}\cdots {Z}_{j_k}
   \Big(\big({Z}_{i_1}
   \cdots {Z}_{i_h}\gamma_\G \big)\circ\widetilde{\iota}\Big)\bigg) \nonumber \\
   & \qquad\qquad\qquad\qquad\qquad\qquad\qquad\qquad\qquad\,\,\quad
   \Big(s-t,(y,0)^{-1}\star (x,\eta)\Big)\,\d\eta\,.\nonumber
\end{align}
 Here $\widetilde{\iota}:\R^{1+N}\to\R^{1+N}$ is the map
 defined by
 $$\widetilde{\iota}(t,(x,\xi)) = (t,(x,\xi)^{-1})\qquad
 (\text{with $t\in\R$, $x\in\R^n$, $\xi\in\R^p$}),$$
 and $(x,\xi)^{-1}$ is the inverse
 of $(x,\xi)$ in the Lie group $\G = (\RN,\star)$; moreover,
 ${Z}_1,\ldots,{Z}_m$ are the lifting vector fields of
 ${X}_1,\ldots,{X}_m$ as in Theorem \ref{exTheoremB}.\bigskip
 \end{theorem}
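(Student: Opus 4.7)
\bigskip

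\noindent \textbf{Proof proposal.} The starting point for all three identities is the representation formula \eqref{defiGammaesplicitinaPRE1}. The plan is to differentiate under the integral sign and to exploit two structural facts: (i) writing $Z_i=X_i+R_i$ for the lifting decomposition (so that $R_i$ operates only in the $\xi$-variables), one has $\int_{\R^p}R_i u(\eta)\,\d\eta=0$ for every sufficiently decaying $u$; this follows by integration by parts in $\eta$, since $R_i$ is divergence-free in the $\eta$-variables, as a consequence of the left-invariance of $Z_i$ on the (nilpotent) group $\G$ combined with the fact that $\div(X_i)\equiv 0$ on $\R^n$ (the latter being automatic for $\dela$-homogeneous vector fields of degree $1$, whose divergence is smooth and $\dela$-homogeneous of degree $-1$, hence identically zero); and (ii) by the left-invariance of $Z_i$ on $\G$, the derivative $X_i^y\bigl[\gamma_\G(\tau,(x,0)^{-1}\star(y,\eta))\bigr]$ equals $(Z_i\gamma_\G)(\tau,(x,0)^{-1}\star(y,\eta))$ minus a remainder $R_i\bigl[\gamma_\G(\tau,(x,0)^{-1}\star(y,\eta))\bigr]$ that disappears upon $\eta$-integration thanks to (i). The passage of $\de_y$ inside the integral is legitimate thanks to the uniform-in-$\eta$ Gaussian estimates for the $Z$-derivatives of all orders of $\gamma_\G$, the same estimates already used in the proof of Theorem \ref{mainteo}. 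As for the time derivatives, since $\gamma_\G$ depends on $(s,t)$ only through $s-t$, one has $\de/\de s\leftrightarrow\de_\tau$ and $\de/\de t\leftrightarrow-\de_\tau$ when acting on the integrand, which is the sole source of the overall factor $(-1)^\beta$ in the three formulas.

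Equipped with (i)--(ii), the proof of \eqref{eq.derGammays} is straightforward: apply $X_i^y$ to both sides of \eqref{defiGammaesplicitinaPRE1}, pass the derivative inside the integral, replace $X_i^y$ by $Z_i$ modulo an $R_i$-remainder via (ii), and eliminate the latter via (i); iterate $h$ times, each iteration being legitimate because $Z_{i_k}\cdots Z_{i_h}\gamma_\G$ still satisfies Gaussian estimates. For \eqref{eq.derGammatx} the situation is symmetric: use the space-symmetry $\Gamma(t,x;s,y)=\Gamma(t,y;s,x)$ from Theorem \ref{mainteo}-(ii) to rewrite $\Gamma(t,x;s,y)=\int_{\R^p}\gamma_\G(s-t,(y,0)^{-1}\star(x,\eta))\,\d\eta$, so that now $x$ occupies the right slot of the $\star$-product, and the argument for \eqref{eq.derGammays} applies \emph{mutatis mutandis}.

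For the mixed formula \eqref{eq.derGammatutte}, the plan is to build on the just-established \eqref{eq.derGammays}. Setting $F:=Z_{i_1}\cdots Z_{i_h}\gamma_\G$, we already know that $X_{i_1}^y\cdots X_{i_h}^y\Gamma(t,x;s,y)=\int_{\R^p}F(s-t,(x,0)^{-1}\star(y,\eta))\,\d\eta$, and we now wish to apply $X_{j_1}^x\cdots X_{j_k}^x$. Since $x$ sits in the left slot we must first move it to the right: the identity $g^{-1}\star h=(h^{-1}\star g)^{-1}$ yields $F(\tau,(x,0)^{-1}\star(y,\eta))=(F\circ\inv)(\tau,(y,\eta)^{-1}\star(x,0))$, after which a polynomial and measure-preserving change of variable $\eta\mapsto\eta'$ on $\R^p$ converts the integral into $\int_{\R^p}(F\circ\inv)(\tau,(y,0)^{-1}\star(x,\eta'))\,\d\eta'$. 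At this point $x$ occupies the right slot and the argument used to derive \eqref{eq.derGammatx}, applied with $F\circ\inv$ in place of $\gamma_\G$, produces exactly \eqref{eq.derGammatutte}. The main obstacle is precisely this change of variable: one has to verify that the two parametrizations $\eta\mapsto(y,\eta)^{-1}\star(x,0)$ and $\eta'\mapsto(y,0)^{-1}\star(x,\eta')$ of the common slice $\{x-y\}\times\R^p\subset\G$ differ by an affine map of $\R^p$ with unit Jacobian. A direct computation via the Baker--Campbell--Hausdorff formula (using that $\eta$ appears linearly in each group product, as it sits in a single factor) reduces this to computing the determinant of an adjoint-type linear map built from $x$ and $y$; in step-two Carnot groups (notably the Heisenberg group) the change of variable simply is $\eta'=-\eta$.
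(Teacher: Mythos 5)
Your proposal follows essentially the same route as the paper's proof: differentiate under the integral sign, write $Z_j^{(y,\eta)}=X_j^y+R_j$ and kill the remainder by integration by parts in $\eta$ (this is exactly Lemma \ref{lem.liftingunderintegral}), then reduce the $x$-derivatives and the mixed case to the $y$-case via the inversion symmetry of $\gamma_\G$ together with a measure-preserving change of variable on the $\R^p$-slice (the map $\phi_{x,y}$ of Steps II--III). Three remarks on where your justifications diverge from the paper's. First, the paper legitimises both the $\eta$-integrability and the passage of derivatives under the integral purely by $F_\lambda$-homogeneity of $Z_{i_1}\cdots Z_{i_h}(\de_s)^\alpha(\de_t)^\beta\gamma_\G$ (Lemma \ref{lemma.derivaaaa}), deliberately avoiding Gaussian bounds for the \emph{derivatives} of $\gamma_\G$, which are not among the estimates stated in Theorem \ref{exTheoremC}; your route imports those derivative estimates from outside the paper, whereas the homogeneity argument is self-contained and gives the explicit degrees needed. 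Second, for the vanishing of $\int_{\R^p}R_j\{\cdots\}\,\d\eta$ the paper uses the sharper structural fact that each coefficient $r_{j,k}$ is $D_\lambda$-homogeneous of degree $\sigma_k^*-1$ and hence \emph{independent of} $\eta_k$, so $r_{j,k}\de_{\eta_k}=\de_{\eta_k}(r_{j,k}\,\cdot)$ termwise and the boundary terms vanish trivially; divergence-freeness alone (which you correctly derive) still leaves you to check that $r_{j,k}\rho\to0$ as $\eta_k\to\pm\infty$ despite the possible polynomial growth of $r_{j,k}$ in $\eta_k$ --- a point your ``sufficiently decaying $u$'' glosses over. Third, the change of variable relating the two parametrizations of the slice is in general a triangular polynomial map with unit Jacobian (by the weighted-homogeneous structure of $\star$), not an affine one for groups of step $\geq 3$; affineness is neither true in general nor needed, and the unit-Jacobian property is what the paper extracts from the explicit construction of $\G$ in \cite{BiagiBonfiglioliLast}. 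Your use of the $x\leftrightarrow y$ symmetry of $\Gamma$ (Theorem \ref{mainteo}-(ii), proved independently in Section \ref{sec.propertGamma}) to obtain the alternative representation \eqref{eq.startingnewGamma} is a legitimate, non-circular shortcut.
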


  The plan of the paper is now in order:
  \begin{enumerate}[-]
   \item in Section \ref{sec.existGamma}
   we use Theorem \ref{exTheoremB} to prove the existence
   of $\Gamma$ as in Theorem \ref{mainteo};

\item
 in Section \ref{sec.DERIVPbHT} we prove
 Theorem \ref{teomainderiv}, furnishing the integral representation
 of the higher order derivatives of $\Gamma$;

 \item in Section \ref{sec.CauchyPbHT} we briefly
  study the existence and the uniqueness of the
  solutions of the Cauchy problem for $\Ht$;


     \item in Section \ref{sec.propertGamma} we prove all the distinguished features
   of $\Gamma$ in Theorem \ref{mainteo}.
  \end{enumerate}

  \section{Existence of a global fundamental solution for $\mathcal{H}$}
   \label{sec.existGamma}
   In the sequel, we tacitly inherit all the notations and assumptions in Theorem \ref{mainteo}.
   In this section
   we shall prove the existence of a global fundamental
   solution for $\mathcal{H}$.
  To begin with, for the sake of clarity, we remind the definition
   of a (global) fundamental solution for a generic
   smooth linear PDO $P$.
   \begin{definition} \label{sec.one:defi_Green}
    On Euclidean space $\RN$, we consider a linear PDO
    $$P = \sum_{|\alpha|\leq d}a_{\alpha}(x)\,D^\alpha_x,$$
    with smooth real-valued coefficients $a_\alpha(x)$ on $\RN$. We say that
    a function
    $$\Gamma:\{(x,y)\in\RN\times\RN: x\neq y\}\longto\R,$$
    is a (global) fundamental solution for $P$ if it satisfies the
    following property: for every $x \in \R^n$, the
     function $\Gamma(x;\cdot)$ is locally integrable
     on $\RN$ and
    \begin{equation} \label{sec.two:eq_GreenGamma}
    \int_{\RN}\Gamma(x;y)\,P^*\varphi(y)\,\d y = -\varphi(x)
    \qquad \text{for every $\varphi\in C_0^{\infty}(\R^N,\R)$},
    \end{equation}
     where $P^*$ denotes the formal adjoint of $P$.
   \end{definition}
   \begin{remark} \label{rem.onFSgeneral}
  (a)\,\, The existence of a global fundamental solution
  for $P$ is far from being obvious and it is, in general,
  a very delicate issue.
  In the particular case of $C^{\infty}$-hypoelliptic linear PDOs $P$
  having a $C^{\infty}$-hypoelliptic formal adjoint $P^*$,
  it is possible to prove the \emph{local}
  existence of a fundamental solution on a suitable neighborhood of
  each point of $\R^N$ (see, e.g., \cite{Treves}; see also \cite{Bony}).\medskip

  (b)\,\, Fundamental solutions are, in general, not unique
  since the addition of a $P$-harmonic function (that is, a smooth function
  $h$ such that $P h=0$ in $\RN$) to a fundamental solution produces another
  fundamental solution. \medskip

  (c)\,\, Nonetheless, if  $P$ is $C^\infty$-hypoelliptic and fulfills
  the Weak Maximum Principle on every bounded open set of $\RN$, then
  there exists at most one fundamental solution $\Gamma$ for $P$ such that
  $$\lim_{\|y\|\to\infty}\Gamma(x;y) = 0, \quad \text{for every $x\in\RN$}.$$
  Indeed, if $\Gamma_1,\Gamma_2$ are two such
  functions, then (for every fixed $x\in\RN$) the map
  $u_x := \Gamma_1(x,\cdot)-\Gamma_2(x,\cdot)$
  belongs to $L^1_{\mathrm{loc}}(\R^N)$
  and it is a solution of $P u_x = 0$ in the weak sense of distributions on $\RN$;
  the hypoellipticity of $P$ ensures that $u_x$ is (a.e.\,equal to) a
  smooth function on $\RN$
  which vanishes at infinity by the assumptions on $\Gamma_1,\Gamma_2$;
  from the Weak Maximum Principle for $P$
  it is standard to obtain that  $\Gamma_1\equiv \Gamma_2$ (a.e.).
 \end{remark}

   Next, as explained in Section \ref{sec:intro},
   we need the following theorem.
  Despite its seemingly technical assumptions,
 this theorem is applicable in many interesting situations,
 as we shall discuss in Example \ref{exam.theoAAAA}.
 \begin{theorem}[See \protect{\cite[Theorem 2.5]{BiagiBonfiglioliLast}}]\label{exTheoremA}
 Let $P$ be a smooth linear PDO on $\R_z^N$,
 and let $\widetilde{P}$ be a lifting of $P$ on
 $\R_z^N\times\R_\xi^p$ which satisfies
 the following structural assumptions:\medskip

  \noindent \emph{(S.1):}
    the formal adjoint $R^*$ of $R := \widetilde{P}-P$
    annihilates any $u\in C^2(\R_z^N\times \R_\xi^p)$ independent of $\xi$, i.e.,
  \begin{equation}\label{sec.one:rem_eqformPstar}
   R^*=\sum_{\beta \neq 0}
   r^*_{\alpha,\beta}(z,\xi)\,\bigg(\frac{\de}{\de z}\bigg)^\alpha \bigg(\frac{\de}{\de\xi}\bigg)^\beta,
  \end{equation}
 for \emph{(}finitely many, possibly identically vanishing\emph{)}
 smooth functions $r^*_{\alpha,\beta}(z,\xi)$;\medskip

 \noindent \emph{(S.2):}
 there exists a
 sequence $\{\theta_j(\xi)\}_j$ in $C_0^{\infty}(\R^p,[0,1])$
 such that\footnote{By this we mean that,
 denoting by $\Omega_j$
 the set $\{\xi\in \R^p:\theta_j(\xi)=1\}$, one has
 $$\bigcup_{j\in \mathbb{N}} \Omega_j=\R^p\quad \text{and}\quad
 \Omega_j\subset \Omega_{j+1}\quad
 \text{for any $j\in\mathbb{N}$}.$$}
 $$\text{$\{\theta_j=1\}\uparrow \R^p$ as $j\uparrow \infty$},$$
 with the following property:
 for every compact set $K\subset \R^n$
 and for any  coef\-fi\-cient function $r^*_{\alpha,\beta}$ of $R^*$ as in
 \eqref{sec.one:rem_eqformPstar} one can find constants $C_{\alpha,\beta}(K)$
 s.t.
\begin{equation*}
   \Big|r^*_{\alpha,\beta}(z,\xi)
   \Big(\frac{\de}{\de\xi}\Big)^\beta\theta_j(\xi)\Big|
   \leq C_{\alpha,\beta}(K),
 \end{equation*}
 uniformly for every $z\in K$, $\xi\in \R^p$ and $j \in \mathbb{N}$.\medskip

  Assume that $\widetilde{P}$ admits a global fundamental solution
 $\widetilde{\Gamma}=\widetilde{\Gamma}\big((z,\xi);(\zeta,\eta)\big)$
 \emph{(}with pole $(z,\xi)$\emph{)}
  satisfying the following integrability assumptions:
 \begin{itemize}
   \item[\emph{(i)}] for every fixed $z, \zeta \in \R^N$ with $z \neq \zeta$, it holds that
   \begin{equation*}
   \eta \mapsto \widetilde{\Gamma}\big((z,0);(\zeta,\eta)\big) \quad
   \text{belongs to $L^1(\R^p)$},
   \end{equation*}

   \item[\emph{(ii)}] for every fixed $z \in \R^N$ and every compact set
   $K\subseteq\R^N$, it holds that
   \begin{equation*}
   (\zeta,\eta)\mapsto
   \widetilde{\Gamma}\big((z,0);(\zeta,\eta)\big)\quad
   \text{belongs to $L^1(K\times\R^p)$}.
   \end{equation*}
  \end{itemize}

  Then the function $\Gamma$  defined by \eqref{sec.one:mainThm_defGamma}
   is a global fundamental solution for $P$ on  $\R^N$ with pole $z$.
\end{theorem}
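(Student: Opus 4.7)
The heuristic sketched in the introduction essentially prescribes the proof: choose a test function on $\R^{N+p}$ of the ``product-plus-cutoff'' form $\psi(\zeta,\eta)=\varphi(\zeta)\,\theta_j(\eta)$, plug it into the distributional fundamental-solution identity for $\widetilde{\Gamma}$ with pole $(z,0)$, and let $j\to\infty$. The structural hypothesis (S.1) makes sure no $\zeta$-derivatives from $R^{*}$ land on $\theta_j$, and the growth hypothesis (S.2) provides the uniform bounds needed to control the $\eta$-derivatives of $\theta_j$ that do appear. Meanwhile, the integrability assumptions (i) and (ii) make $\Gamma$ well-defined and they are exactly what is needed to apply Fubini and dominated convergence.

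\textbf{Well-posedness of $\Gamma$ and the core identity.} First I would verify that $\Gamma(z;\zeta)$ defined by \eqref{sec.one:mainThm_defGamma} is finite for $z\neq \zeta$ (by (i)) and belongs to $L^1_{\loc}(\R^N_\zeta)$ (by (ii) and Tonelli). Next I would write the defining identity for $\widetilde{\Gamma}$:
\begin{equation*}
 \int_{\R^N}\!\!\!\int_{\R^p}\widetilde{\Gamma}\bigl((z,0);(\zeta,\eta)\bigr)\,\widetilde{P}^{*}\psi(\zeta,\eta)\,\d\eta\,\d\zeta=-\psi(z,0),\qquad \psi\in C_0^\infty(\R^{N+p}).
\end{equation*}
Specializing to $\psi(\zeta,\eta)=\varphi(\zeta)\,\theta_j(\eta)$ with $\varphi\in C_0^\infty(\R^N)$ and $\theta_j$ as in (S.2), and decomposing $\widetilde{P}^{*}=P^{*}+R^{*}$, hypothesis (S.1) gives
\begin{equation*}
 \widetilde{P}^{*}\bigl(\varphi(\zeta)\theta_j(\eta)\bigr)=\theta_j(\eta)\,P^{*}\varphi(\zeta)+\sum_{\beta\neq 0}r^{*}_{\alpha,\beta}(\zeta,\eta)\,\de^\alpha_\zeta\varphi(\zeta)\,\de^\beta_\eta\theta_j(\eta),
\end{equation*}
the point being that, by (S.1), every summand of $R^{*}$ carries at least one $\eta$-derivative, so no $\zeta$-derivative ever falls on $\theta_j$. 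Substituting, the identity splits into a ``main'' piece and an ``error'' piece.

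\textbf{Passing to the limit.} The heart of the proof is the passage $j\to\infty$, which I expect to be the main obstacle and which rests entirely on (S.1), (S.2), and (ii). For the main piece, write it as $\int_{\R^N}P^{*}\varphi(\zeta)\bigl(\int_{\R^p}\widetilde{\Gamma}((z,0);(\zeta,\eta))\,\theta_j(\eta)\,\d\eta\bigr)\d\zeta$; since $0\leq \theta_j\leq 1$, $\theta_j\to 1$ pointwise, and $\widetilde{\Gamma}((z,0);\cdot,\cdot)\in L^1(\mathrm{supp}(\varphi)\times \R^p)$ by (ii), dominated convergence combined with Fubini converts it to $\int_{\R^N}\Gamma(z;\zeta)\,P^{*}\varphi(\zeta)\,\d\zeta$. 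For the error piece, (S.2) gives the uniform pointwise bound $|r^{*}_{\alpha,\beta}(\zeta,\eta)\,\de^\beta_\eta\theta_j(\eta)|\leq C_{\alpha,\beta}(K)$ on $K=\mathrm{supp}(\varphi)$, while the assumption $\{\theta_j=1\}\uparrow \R^p$ forces $\de^\beta_\eta\theta_j\to 0$ pointwise for $\beta\neq 0$; another application of dominated convergence, using (ii) as the integrable envelope, then kills the error term. Finally, $0\in\{\theta_j=1\}$ for all large $j$, so $\theta_j(0)\to 1$ and the right-hand side tends to $-\varphi(z)$. The technical crux is thus the interplay between (S.1), which isolates the $\eta$-derivatives in $R^{*}$, and (S.2), which tames them uniformly against the coefficients of $R^{*}$; once these are set up correctly, the two dominated-convergence arguments are routine.
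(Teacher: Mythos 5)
Your proof is correct and is precisely the argument the paper intends: Theorem \ref{exTheoremA} is imported from \cite[Theorem 2.5]{BiagiBonfiglioliLast}, and the heuristic in Section \ref{sec:intro} (identities \eqref{distriheureee.EQ1} and \eqref{passtothelimit}) is exactly your product test function $\varphi(\zeta)\,\theta_j(\eta)$, with (S.1) ensuring only $\eta$-derivatives of $\theta_j$ survive in the error term, (S.2) providing the uniform bound, and assumptions (i)--(ii) justifying the Fubini and dominated-convergence steps. The one point worth making explicit is that the pointwise convergence $\partial^\beta_\eta\theta_j\to 0$ for $\beta\neq 0$ requires each point to lie eventually in the \emph{interior} of $\{\theta_j=1\}$, which holds for the dilated cutoffs actually used (cf.\ Lemma \ref{lem.HtGsaturable}).
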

\begin{example}\label{exam.theoAAAA}
 Theorem \ref{exTheoremA} can be applied in the following examples:\medskip

 1) The choices of lifting pairs $(P,\widetilde{P})$
 given by
 $$\text{$(\Delta_n, \Delta_{n+p})$ and $(\Ht_n, \Ht_{n+p})$}$$
 trivially satisfy assumptions (S.1)-(S.2) and (i)-(ii)
 of Theorem \ref{exTheoremA}.\medskip

 2) A less trivial example is given (as a very particular case of the PDOs in the present paper) by the ``parabolic Grushin operator'' on $\R^3_z\equiv \R_t\times \R^2_x$
  (where $z=(t,x)$) i.e.,
 $$G=\frac{\de^2}{\de x_1^2}+x_1^2\,\frac{\de^2}{\de x_2^2}-\frac{\de}{\de t},$$
 with a lifting given by
 $$\widetilde{G}=\frac{\de^2}{\de x_1^2}+\bigg(\frac{\de}{\de \xi}+x_1\,\frac{\de}{\de x_2}\bigg)^2
 -\frac{\de}{\de t}
 \quad \text{on $\R_t\times\R^2_x\times \R_\xi$.}$$
 As we shall see, for this last example
 not only (S.1)-(S.2) are satisfied,
 but there also exists a fundamental solution $\widetilde{\Gamma}$ for
 $\widetilde{G}$ satisfying hypotheses {(i)-(ii)}
 of Theorem \ref{exTheoremA}. Therefore, we can infer that $G$ admits a global fundamental solution given by the
 saturation function \eqref{sec.one:mainThm_defGamma}.\medskip

  3) More generally, in the paper
  \cite{BiagiBonfiglioliLast} a meaningful
  case is described where Theorem \ref{exTheoremA} can always be applied: namely,
  any H\"ormander sum of squares $P=\sum_{j=1}^m X_j^2$, where $X_1,\ldots,X_m$ satisfy
  axioms (H.1)-(H.2), fulfils the as\-sump\-tions of Theorem \ref{exTheoremA}, thus
   admitting a global fundamental solution.
\end{example}

  Now, we proceed as follows: first we use Theorem \ref{exTheoremB} to prove the existence of a lifting $\widetilde{\mathcal{H}}$
   for $\Ht$ satisfying assumptions (S.1) and (S.2) of Theorem \ref{exTheoremA};
   then we show the existence of a fundamental solution $\widetilde{\Gamma}$ for $\widetilde{\mathcal{H}}$
   fulfilling conditions (i) and (ii) of Theorem \ref{exTheoremA}: the latter will then ensure the
   existence of a fundamental solution $\Gamma$ for $\mathcal{H}$.\medskip

  According to Theorem \ref{exTheoremB},
  given a family $X$ of vector fields in $\R^n$ satisfying axioms (H.1)-(H.2), and setting
  $N=\mathrm{dim}(\mathrm{Lie}\{X\})$,
  it is possible to find
  a homogeneous Ca\-rnot group $\G = (\RN,\star,D_\lambda)$ on $\RN=\R_x^n\times\R_\xi^p$
  (with $m$ generators and nilpotent of step $r = \sigma_n$)
  and a system $\{Z_1,\ldots,Z_m\}$ of Lie-generators of
  $\LieG$ such that, for every $i = 1,\ldots, m$,
  $Z_i$ is a lifting of $X_i$.
  It can also be shown that the dilations $\{D_\lambda\}_{\lambda > 0}$ on $\G$
  take the form
  \begin{equation} \label{eq.dsplitdeladelastar}
   D_\lambda(x,\xi) = \big(\dela(x),\delta_\lambda^* (\xi)\big),
   \qquad \text{for every $(x,\xi)\in\RN=\R_x^n\times\R_\xi^p$},
   \end{equation}
   where $\delta_\lambda^* $ is another family of non-isotropic
   dilations on $\R^p$ which we write as
  \begin{equation} \label{eq.explicitdelastar}
   \delta_\lambda^* (\xi) = (\lambda^{\sigma_1^*}\xi_1,\ldots,
   \lambda^{\sigma_p^*}\xi_p), \quad
   \xi\in\R^p.
  \end{equation}
   Note that, at this stage, three homogeneous dimensions naturally arise:
\begin{equation} \label{delaprodottosaQQQQQQQQQQ}
 \txt q := \sum_{j = 1}^n\sigma_j,\quad  q^* := \sum_{j = 1}^p\sigma_j^*,\quad Q = q+q^*,
\end{equation}
  which are, respectively, the homogeneous dimensions of $(\R^n,\dela)$,
  $(\R^p,\delta_\lambda^*)$, $(\RN,D_\lambda)$. Accordingly,
  we fix the canonical homogeneous norms $S,N,h$ on
  the spaces $\R^n,\R^p,\R^N$ respectively,
  defined by
  \begin{equation} \label{sec.two_2:eq_defcanonicalh}
   S(x):=\sum_{j = 1}^n |x_j|^{1/\sigma_j},\quad N(\xi):=\sum_{j = 1}^p
   |\xi_j|^{1/\sigma_j^*},\qquad  h(x,\xi) := S(x)+N(\xi).
  \end{equation}
 We note that any homogeneous norm $d$ on $\G$ is
 controlled by $h$ (from above and below times suitable constants);
 see \cite[Proposition 5.1.4]{BLUlibro}.
\begin{remark} \label{sec.two_2:remChangeF}
   For strictly technical reasons, following \cite{BiagiBonfiglioliLast}, we need to look at
    the following \textquotedblleft con\-vo\-lu\-tion\--like'' map
   \begin{equation*}
   F:\R^n\times\R^n\times \R^p\longto\RN, \qquad
   F(x,y,\eta) := (x,0)^{-1}\star (y,\eta).
   \end{equation*}
   As in \cite[Chapter 1.3]{BLUlibro}), one can prove that
  \begin{equation} \label{sec.two_2:eq_expressionconvmap}
    \begin{split}
     F_1 (x, y,\eta) & = y_1-x_1, \\
     F_i (x,y,\eta) & = y_i-x_i+ p_i (x,y,\eta)  \qquad (i = 2,\ldots,n), \\
     F_{n+k} (x,y,\eta) & = \eta_k+ q_k (x,y,\eta),\qquad  (k = 1,\ldots,p),
    \end{split}
   \end{equation}
  where, $p_i$ and $q_k$ are
  polynomials with the following features:
\begin{itemize}
  \item[-]
  $p_i$ only depends on those variables
  $x_h,y_h$ and $\eta_j$ such that
  $\sigma_h,\sigma^*_j< \sigma_i$;

  \item[-] $q_k$ only depends on those variables
  $x_h,y_h$ and $\eta_j$ such that
  $\sigma_h,\sigma^*_j< \sigma^*_k$;

  \item[-] $p_i (0,y,\eta) = q_k (0,y,\eta ) = 0$,
  for every $(y,\eta)\in\RN$.
 \end{itemize}
 Let now $x,y\in\R^n$ be fixed.
 Since $q_1$  {does not depend} on $\eta_1,\ldots,\eta_p$ and since,
   for every $k \in\{2,\ldots,p\}$, $q_k$ only depends on $\eta_1,\ldots,\eta_{k-1}$, we see that
   the map
   \begin{equation} \label{sec.two_2:def_mapPsixy}
    \Psi_{x,y}: \R^p\longto\R^p, \quad \Psi_{x,y}(\eta) :=
    \Big(F_{n+1} (x,y,\eta),
    \ldots,F_{n+p} (x,y,\eta) \Big)
   \end{equation}
   defines a  {$C^\infty$-diffeomorphism} of $\R^p$,
   with polynomial components.
   Hence, in particular, $\Psi_{x,y}$ is a proper map,
   which is equivalent to saying that
   $$\lim_{\|\eta\|\to\infty}\|\Psi_{x,y}(\eta)\| = \infty.$$
   Furthermore, by \eqref{sec.two_2:eq_expressionconvmap}, one has
   \begin{equation*}
    \det (\mathcal{J}_{\displaystyle \Psi_{x,y}}(\eta) ) = 1,
    \quad \text{for every $\eta \in \R^p$}.
   \end{equation*}
   The map $\Psi_{x,y}$ will be repeatedly used as a change of variable
   in integral estimates; indeed, one has
\begin{equation*}
    (x,0)^{-1}\star \Big(y,\Psi^{-1}_{x,y}(\eta')\Big)=\bigg(F_1\Big(x,y,\Psi^{-1}_{x,y}(\eta')\Big),\ldots,F_n\Big(x,y,\Psi^{-1}_{x,y}(\eta')\Big),\eta' \bigg);
\end{equation*}
 consequently, with the notation in \eqref{sec.two_2:eq_defcanonicalh},
 $\Psi_{x,y}$ enjoys the nice (technical) feature
\begin{equation}\label{sservePsi}
 h\Big((x,0)^{-1}\star \Big(y,\Psi^{-1}_{x,y}(\eta')\Big)\Big)\geq N(\eta').
\end{equation}
 Here $h$ can be replaced by any homogeneous norm $d$ on $\G$, times some constant.
 \end{remark}
   If $\LL_\G=\sum_{j=1}^m Z_j^2$,
  it is straightforward to recognize that the Heat operator
  $\Ht_\G = \LL_\G - \de_t$ is a lifting of $\Ht=\LL-\de_t$ on $\R^{1+N} = \R_t\times\R_x^{n}\times\R_\xi^p$,
  that is,
  $$\Ht_\G(u\circ \pi)(t,x,\xi) = (\Ht u)(t,x),
  \quad \text{$\forall\,\,t\in\R$, $(x,\xi)\in\RN$, $u\in C^2(\R^{1+n})$},$$
  where $\pi:\R^{1+N}\to\R^{1+n}$ is the canonical projection
  of $\R^{1+N}$ onto $\R^{1+n}$.
  Our aim is now to prove that the operator $\Ht_\G$,
  as a lifting of $\Ht$, satisfies the assumptions
  (S.1) and (S.2) in Theorem \ref{exTheoremA}.
  \begin{lemma} \label{lem.HtGsaturable}
   The o\-pe\-rator $\Ht_\G$, as a lifting of $\Ht$,
   satisfies assumptions
  \emph{(S.1)-(S.2)} in Theorem \ref{exTheoremA}.
  \end{lemma}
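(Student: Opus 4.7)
My plan is to verify assumptions (S.1)--(S.2) for $\widetilde{P} = \Ht_\G$ and $P = \Ht$. Since the $-\de/\de t$ terms cancel, the remainder
$$R := \Ht_\G - \Ht = \sum_{j=1}^m(Z_j^2 - X_j^2)$$
acts only in the spatial variables $(x,\xi)\in\RN$, which reduces both conditions to purely spatial statements about $R$.

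For (S.1), I would first prove the identity $R^* = R$ and then invoke the lifting property. Since $\Ht_\G$ lifts $\Ht$, the operator $R$ annihilates every function of the form $u(x,\xi) = v(x)$; the same then holds for $R^* = R$, and writing $R^*$ in normal form $\sum r^*_{\alpha,\beta}(z,\xi)\,D_z^\alpha\,\bigl(\de/\de\xi\bigr)^\beta$, this forces all terms with $\beta=0$ to vanish, which is precisely (S.1). To establish $R^* = R$, I would check separately that $Z_j^* = -Z_j$ and $X_j^* = -X_j$, from which $(Z_j^2)^* = Z_j^2$ and $(X_j^2)^* = X_j^2$ follow at once. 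The former is standard: the $Z_j$ are left-invariant on the nilpotent (hence unimodular) Carnot group $\G$, with Lebesgue measure as Haar measure. The latter reduces to $\div(X_j) = 0$, itself a consequence of the $\dela$-homogeneity of degree $1$: the coefficients $a_{ij}$ of $X_j = \sum_i a_{ij}(x)\,\de/\de x_i$ are polynomials $\dela$-homogeneous of degree $\sigma_i - 1$, so each $\de a_{ij}/\de x_i$ would have strictly negative homogeneous degree and must vanish identically.

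For (S.2), I would construct $\{\theta_j\}$ using the non-isotropic dilations $\delta_\lambda^*$ on $\R^p$. Fix $\theta\in C_0^\infty(\R^p,[0,1])$ with $\theta\equiv 1$ on $\{N\leq 1\}$ and supported in $\{N\leq 2\}$, where $N$ is as in \eqref{sec.two_2:eq_defcanonicalh}; set $\theta_j := \theta\circ\delta_{1/j}^*$. Since $N$ is $\delta_\lambda^*$-homogeneous of degree $1$, one has $\{\theta_j=1\}=\{N\leq j\}\uparrow\R^p$, and $\bigl(\de/\de\xi\bigr)^\beta\theta_j$ is supported on the annulus $\{j\leq N(\xi)\leq 2j\}$ with $|\bigl(\de/\de\xi\bigr)^\beta\theta_j|\leq C_\beta\,j^{-\sum_k\sigma_k^*\beta_k}$. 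Since $R = R^*$ is $D_\lambda$-homogeneous of degree $2$, each coefficient $r^*_{\alpha,\beta}$ is a polynomial $D_\lambda$-homogeneous of degree $2-\sum_i\sigma_i\alpha_i-\sum_k\sigma_k^*\beta_k$; for $x$ in a compact set $K$ and $\xi$ on this annulus, $|r^*_{\alpha,\beta}(x,\xi)|\leq C(K)\,j^{\,2-\sum_i\sigma_i\alpha_i-\sum_k\sigma_k^*\beta_k}$, so the full product is controlled by $C_{\alpha,\beta}(K)\,j^{\,2-\sum_i\sigma_i\alpha_i-2\sum_k\sigma_k^*\beta_k}$. By (S.1) we have $\beta\neq 0$, hence $\sum_k\sigma_k^*\beta_k\geq 1$ (recall $\sigma_k^*\geq 1$); the exponent is therefore $\leq 0$ in every case, yielding the required uniform bound.

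The main obstacle I anticipate is the delicate $D_\lambda$-weight bookkeeping in (S.2): the decay rate produced by $\bigl(\de/\de\xi\bigr)^\beta\theta_j$ (controlled only by the partial dilations $\delta_\lambda^*$) must absorb the polynomial growth of the coefficients of $R^*$ (governed by the full dilation $D_\lambda$). The balance works precisely because $R$ has $D_\lambda$-degree exactly $2$, and because every surviving term of $R^*$ necessarily carries a $\xi$-derivative (that is, $\beta\neq 0$) --- which is exactly (S.1). Both ingredients are simultaneously available only thanks to the Carnot-group structure furnished by Theorem \ref{exTheoremB}.
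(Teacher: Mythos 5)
Your treatment of (S.1) is correct and is essentially the paper's argument (the paper simply notes that $R=\LL_\G-\LL$ is self-adjoint because both operators are sums of squares of homogeneous vector fields, and that $R$ kills functions independent of $\xi$ by the lifting property); your extra details --- $\div(X_j)=0$ from degree-$1$ homogeneity, $Z_j^*=-Z_j$ from left-invariance, and the normal-form argument forcing the $\beta=0$ coefficients to vanish --- are all sound. Your construction of $\theta_j$ in (S.2) is also the paper's (the paper uses $\theta\circ\delta^*_{2^{-j}}$ and then defers the verification to \cite[Theorem 4.4]{BiagiBonfiglioliLast}).

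The gap is in the quantitative bookkeeping for (S.2). First, the homogeneity degree of $r^*_{\alpha,\beta}$ is mis-stated: if $c_\gamma D^\gamma_{(x,\xi)}$ is $D_\lambda$-homogeneous of degree $2$ as an operator (with the paper's convention $P(f\circ D_\lambda)=\lambda^2(Pf)\circ D_\lambda$), then the coefficient $c_\gamma$ is homogeneous of degree $\sum_i\sigma_i\alpha_i+\sum_k\sigma^*_k\beta_k-2$, not $2-\sum_i\sigma_i\alpha_i-\sum_k\sigma^*_k\beta_k$. (A check: in the parabolic Grushin example, $Z_2^2-X_2^2=\de_\xi^2+2x_1\,\de_{x_2}\de_\xi$, and the coefficient $2x_1$ of $\de_{x_2}\de_\xi$ has degree $1=\sigma_2+\sigma^*_1-2$, whereas your formula would give $-1$ and hence a vanishing polynomial.) Second, even after fixing the sign, the crude estimate ``$|r^*_{\alpha,\beta}(x,\xi)|\leq C(K)\,j^{\,d_{\alpha,\beta}}$ on $\{N\leq 2j\}$ with $d_{\alpha,\beta}$ the total homogeneous degree'' combined with $|(\de/\de\xi)^\beta\theta_j|\leq C\,j^{-\sum_k\sigma^*_k\beta_k}$ yields only $j^{\,\sum_i\sigma_i\alpha_i-2}$, which is unbounded as soon as some $\sigma_i>2$, i.e.\ for groups of step $\geq 3$. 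What saves the argument is the finer structure of $R$: writing $Z_j=X_j+R_j$ with $R_j=\sum_k r_{j,k}(x,\xi)\,\de/\de\xi_k$ and $r_{j,k}$ homogeneous of degree $\sigma^*_k-1$, one has $R=\sum_j\big(X_jR_j+R_jX_j+R_j^2\big)$, and in each resulting coefficient the $\xi$-dependence sits entirely in factors of the form $r_{j,k}$ or $\de_{\xi_l}r_{j,k}$, whose weighted $\xi$-degree is at most $\sum_k\sigma^*_k\beta_k-1$; the remaining weight is carried by the $a_{ij}(x)$, which are merely bounded on $K$. Hence for $x\in K$ and $N(\xi)\leq 2j$ one gets $|r^*_{\alpha,\beta}(x,\xi)|\leq C(K)\,j^{\,\sum_k\sigma^*_k\beta_k-1}$, and the product is $O(j^{-1})$. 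You need this refinement --- distinguishing the $x$-part from the $\xi$-part of the homogeneous weight --- for the estimate to close in general.
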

  \begin{proof} (S.1):\,\,First of all we observe that, by definition, we have
   $$\text{$R := \Ht_\G - \Ht = \LL_\G - \LL$ on $\R^{1+N}$;}$$
   thus, since both $\LL_\G$ and $\LL$
   are self-adjoint (as they are sums of squares
   of homogeneous vector fields)
   we get $R^* = R$; moreover, as $\LL_\G$
   is a lifting of $\LL$, we infer that $R$ annihilates any $C^2$ function
   independent of $\xi$. \medskip

   (S.2):\,\,If $N$ is as in \eqref{sec.two_2:eq_defcanonicalh}, we choose a function $\theta\in C^\infty_0(\R^p,[0,1])$
   such that
 $$\mathrm{supp}(\theta) \subseteq \{\xi \in \R^p: N(\xi) \leq 2\};\qquad
  \text{$\theta \equiv 1$ on $\{\xi\in\R^p: N(\xi) < 1\}$.}$$
   We
   define a sequence $\{\theta_j\}_{j}$
   in $C^\infty_0(\R^p)$ by setting,
   for every $j\in \mathbb{N}$,
   \begin{equation*}
  \theta_j(\xi) := \theta (\delta^*_{2^{-j}}(\xi) ),
  \quad \text{for $\xi\in\R^p$}.
   \end{equation*}
   By arguing exactly as in \cite[Theorem 4.4]{BiagiBonfiglioliLast},
   after several technical
   computations (based on the homogeneity of the $Z_j$ and on the structure of $\delta_\lambda^*$) one can recognize that $\{\theta_j\}_{j}$
   satisfies the properties in assumption (S.2).
     \end{proof}

  With Lemma \ref{lem.HtGsaturable} at hand, the path towards the existence
  of a global fun\-da\-men\-tal solution for $\Ht$ is traced
  in Theorem \ref{exTheoremA},
  and it consists of two parts:
  \begin{enumerate}
   \item[(1)] firstly, we prove that
   $\Ht_\G$ admits a fundamental solution $\Gamma_\G$;

   \item[(2)] secondly, we show that such a
   $\Gamma_\G$ satisfies the integrability
   assumptions (i)-(ii) in Theorem \ref{exTheoremA}.
  \end{enumerate}
  As for (1), it follows from the first statement in the next result; in the sequel,
  in order to avoid the cumbersome
   notation $(t,(x,\xi))$ for the points
   in the product space $\R\times \R^N=\R_t\times (\R^n_x\times \R^p_\xi)$
   we often  write $(t,x,\xi)$.
  \begin{theorem}[\protect{\cite[Theorems 2.1, 2.5]{BLUpaper}}]\label{exTheoremC}
 There exists a map
  $$\gamma_\G:\R^{1+N}\equiv \R^{1+n+p}\to \R,$$
  smooth away from the origin,
   such that
   \begin{equation} \label{eq.defiGammaGHtG}
    \Gamma_\G(t,x,\xi;s,y,\eta) :=
    \gamma_\G\Big(s-t,(x,\xi)^{-1}\star(y,\eta)\Big)
   \end{equation}
   is a global fundamental solution of the operator
   $\Ht_\G=\LL_{\G}-\de_t$.
   In its turn, there exists a unique symmetric homogeneous norm on $\G$
    \emph{(}in the sense of \emph{\cite{BLUlibro}}\emph{)}
     $d\in C^\infty(\RN\setminus\{0\})$
    such that
    $$d^{2-Q}\big((x,\xi)^{-1}\star(y,\eta)\big), \qquad
    (x,\xi)\neq (y,\eta)$$
    is the global fundamental solution of $\LL_\G$
    \emph{(}where $Q$ is as in \eqref{delaprodottosaQQQQQQQQQQ}\emph{)}.
    The following Gaussian estimates for $\gamma_\G$ hold:
    there exists a constant $\mathbf{c} > 0$ such that,
    for every $(x,\xi)\in\RN$ and every $t > 0$, one has
    \begin{equation} \label{eq.Gaussest}
     \mathbf{c}^{-1}\,t^{-Q/2}\,\exp\Bigg(
     -\frac{\mathbf{c}\,d^2(x,\xi)}{t}\Bigg)
     \leq \gamma_\G(t,x,\xi) \leq
     \mathbf{c}\,t^{-Q/2}\,\exp\Bigg(
     -\frac{d^2(x,\xi)}{\mathbf{c}\,t}\Bigg).
    \end{equation}
 Via \eqref{eq.defiGammaGHtG}, global Gaussian estimates
 analogous to \eqref{eq.Gaussest} hold true for $\Gamma_\G$.\medskip

  Moreover, $\gamma_\G$ satisfies the following
   additional properties:
   \begin{itemize}
    \item[\emph{(i)}] $\gamma_\G\geq 0$ and $\gamma_\G(t,x,\xi) = 0$ if and only if $t \leq 0$;

    \item[\emph{(ii)}] $\gamma_\G(t,x,\xi) = \gamma_\G(t, (x,\xi)^{-1})$
    for every  $(t,x,\xi)$;

    \item[\emph{(iii)}] for every $\lambda > 0$ and
    every $(t,x,\xi)$, we have
    $$\gamma_\G\big(\lambda^2t, \delta_\lambda  x,\delta_\lambda^* \xi\big) =
    \lambda^{-Q}\,\gamma_\G(t,x,\xi),$$
    where $Q=q+q^*$ is the homogeneous dimension of the group $\G$;

    \item[\emph{(iv)}] $\gamma_\G$ vanishes at infinity, that is,
    $\gamma_\G(t,x,\xi) \to 0$ as $\|(t,x,\xi)\|\to\infty$;

    \item[\emph{(v)}] for every $t > 0$, we have
    $$\int_{\R^n\times \R^p}\gamma_\G(t,x,\xi)\,\d x\,\d \xi = 1.$$
   \end{itemize}
   Finally, if we consider the function
   $\Gamma_\G^*$ defined by
   \begin{equation*}
   \Gamma_\G^*(t,x,\xi;s,y,\eta) := \Gamma_\G(s,y,\eta;t,x,\xi),
   \end{equation*}
   then $\Gamma_\G^*$ is a global fundamental solution
   for the adjoint operator $\Ht_\G^* = \LL_\G+\de_t$.
\end{theorem}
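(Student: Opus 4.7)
The plan is to construct $\gamma_\G$ via the heat semigroup associated with the sub-Laplacian $\LL_\G = \sum_{j=1}^m Z_j^2$ on the homogeneous Carnot group $\G = (\RN,\star,D_\lambda)$. Since $Z_1,\ldots,Z_m$ are left-invariant and Lie-generate $\LieG$, H\"ormander's theorem yields hypoellipticity of $\Ht_\G = \LL_\G - \de_t$; moreover, by unimodularity of $\G$ one has $Z_j^* = -Z_j$ in $L^2(\G,dx)$, so $-\LL_\G$ is nonnegative and essentially self-adjoint on $C_0^\infty(\G)$. Spectral calculus then provides a symmetric Markov semigroup $\{e^{t\LL_\G}\}_{t\geq 0}$; by left-invariance combined with Hunt's theorem this semigroup is of convolution type, and hypoellipticity of $\Ht_\G$ forces the associated measures $\mu_t$, $t>0$, to have smooth densities $\gamma_\G(t,\cdot) \in C^\infty(\G)$. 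Extending by zero for $t\leq 0$ yields a function on $\R^{1+N}$ smooth away from the origin. To verify that $\Gamma_\G$ from \eqref{eq.defiGammaGHtG} is a fundamental solution of $\Ht_\G$, one pairs it with $\varphi \in C_0^\infty(\R^{1+N})$, rewrites the spatial integral as the semigroup acting on $\varphi(s,\cdot)$, integrates by parts in $s$, and invokes $e^{t\LL_\G}\varphi(s,\cdot) \to \varphi(s,\cdot)$ as $t\downarrow 0^+$.

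Next I would verify the listed features one by one. Positivity and the support property (i) stem from the sub-Markov character of the semigroup and from the initial-value normalisation. The inversion invariance (ii), $\gamma_\G(t,g) = \gamma_\G(t,g^{-1})$, is equivalent to the symmetry $p_t(x,y) = p_t(y,x)$ of the integral kernel $p_t(x,y) = \gamma_\G(t,x^{-1}\star y)$, which is in turn a consequence of self-adjointness of $\LL_\G$ on $L^2(\G,dx)$. The $D_\lambda$-homogeneity (iii) follows because $\Ht_\G$ is homogeneous of degree $2$ under the parabolic dilation $(t,x,\xi)\mapsto(\lambda^2 t, D_\lambda(x,\xi))$, so $(t,x,\xi)\mapsto \lambda^Q\gamma_\G(\lambda^2 t,D_\lambda(x,\xi))$ is another fundamental solution vanishing at infinity; uniqueness (Remark \ref{rem.onFSgeneral}-(c), applied to $\Ht_\G$ via the Weak Maximum Principle, cf.\ Remark \ref{sec.two_1:remHormander}) forces coincidence with $\gamma_\G$. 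The total mass (v) reduces to $e^{t\LL_\G}1 = 1$, a standard consequence of positivity plus polynomial volume growth of $\G$; the decay (iv) follows from the forthcoming upper Gaussian bound.

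For the Gaussian two-sided estimates \eqref{eq.Gaussest}, I would invoke the classical theory of Varopoulos-Saloff-Coste-Coulhon together with Jerison-S\'anchez-Calle, which provides such bounds in terms of the Carnot-Carath\'eodory distance of $Z_1,\ldots,Z_m$; the equivalence of that distance with any fixed homogeneous norm on $\G$ yields \eqref{eq.Gaussest} for an appropriate $d$. To pin down the unique symmetric homogeneous norm $d$ such that $d^{2-Q}((x,\xi)^{-1}\star(y,\eta))$ is the fundamental solution of $\LL_\G$, I would follow Folland and set
\[
\Gamma_0(x,\xi) := \int_0^{\infty} \gamma_\G(t,x,\xi)\,\d t,
\]
the integral converging off the origin by the upper Gaussian bound and by $Q\geq 3$ (which holds since $N>n$ forces $\G$ to be non-abelian). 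An interchange of $\LL_\G$ with the time integration gives $\LL_\G \Gamma_0 = -\Dir_0$; combining (ii) and (iii) shows that $\Gamma_0$ is smooth off the origin, positive, inversion-invariant and $D_\lambda$-homogeneous of degree $2-Q$, so a normalisation of $\Gamma_0^{1/(2-Q)}$ is the sought symmetric homogeneous norm. Finally, the dual statement about $\Gamma_\G^*$ follows by swapping $(t,x,\xi)\leftrightarrow(s,y,\eta)$ in the defining distributional identity, since $\Ht_\G^* = \LL_\G+\de_t$ arises from $\Ht_\G$ by time reversal. The main technical obstacle of the plan is the sharp two-sided Gaussian bound \eqref{eq.Gaussest}, whose proof rests on the substantial machinery of doubling, the scale-invariant Poincar\'e inequality, and parabolic Harnack on $\G$; in keeping with the statement, I would simply cite \cite{BLUpaper} and the Gaussian-estimate references already recalled in the introduction.
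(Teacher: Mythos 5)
This theorem is not proved in the paper at all: it is imported verbatim from \cite{BLUpaper} (Theorems 2.1 and 2.5 there), with the sharp two-sided bound \eqref{eq.Gaussest} ultimately resting on the Gaussian-estimate literature (Folland, Jerison--S\'anchez-Calle, Kusuoka--Stroock, Varopoulos--Saloff-Coste--Coulhon) recalled in the introduction. Your proposal therefore does strictly more than the paper: it reconstructs the standard semigroup proof that underlies the cited result. As an outline it is essentially correct and matches the classical route --- essential self-adjointness of $\LL_\G$ on the unimodular group $\G$, Hunt's theorem plus hypoellipticity to get a smooth convolution kernel, Folland's time-integration $\Gamma_0=\int_0^\infty\gamma_\G(t,\cdot)\,\d t$ to produce the homogeneous norm $d$ with $\LL_\G(d^{2-Q})=-\Dir_0$ (and $Q\ge 3$ is indeed guaranteed here since $N>n$ forces step $\ge 2$), and delegation of the two-sided Gaussian bound to the references. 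What the citation-only approach buys the paper is brevity; what your reconstruction buys is a self-contained logical skeleton, which is genuinely useful if one wants to adapt the result.

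Two small points deserve tightening. First, the strict positivity half of (i) --- that $\gamma_\G(t,x,\xi)>0$ for \emph{every} $(x,\xi)$ when $t>0$ --- does not follow from the ``sub-Markov character plus initial normalisation''; it requires either the lower Gaussian bound in \eqref{eq.Gaussest} or a strong maximum principle/Harnack argument. Since you do invoke the lower bound later, you should make explicit that (i) is a consequence of it rather than of the semigroup construction alone, and order the argument accordingly. Second, your derivation of the homogeneity (iii) via uniqueness of the fundamental solution vanishing at infinity presupposes (iv), which in your scheme follows from the upper Gaussian bound; so the dependency chain must be: Gaussian bounds first (cited), then (iv), then (iii). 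Alternatively, (iii) can be obtained with no circularity directly from uniqueness of the heat semigroup, since the parabolically rescaled kernel generates a Markov semigroup with the same generator $\LL_\G$. Neither issue is a genuine gap, but as written the justifications of (i) and (iii) are slightly out of order.
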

  As for (2), the needed integrability
  properties of $\Gamma_\G$ rely on
  the Gaussian estimates of $\gamma_\G$ in \eqref{eq.Gaussest}, as we prove in the next result.
 \begin{theorem}\label{thm.Gammaint}
  Let the notation of Theorem \ref{exTheoremC}
  apply. Then the global fun\-da\-men\-tal solution
  $\Gamma_\G$ of $\Ht_\G$ satisfies the integrability
  assumptions
  \emph{(i)} and \emph{(ii)} in Theorem \ref{exTheoremA}.
 \end{theorem}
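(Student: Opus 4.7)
Part (i) (integrability of $\eta\mapsto\widetilde{\Gamma}((z,0);(\zeta,\eta))$ for fixed $z\neq\zeta$) is a genuine decay estimate that rests on the Gaussian upper bound \eqref{eq.Gaussest} for $\gamma_\G$ combined with the change of variable $\Psi_{x,y}$ from Remark \ref{sec.two_2:remChangeF}; the role of $\Psi_{x,y}$ is precisely to turn the group-convolution expression \eqref{eq.defiGammaGHtG} into something with explicit Gaussian decay in the saturation variable. Part (ii), by contrast, will come out almost for free: it follows from the unit-mass property (v) of $\gamma_\G$ and the left-invariance of Lebesgue measure on the Carnot group $\G$.

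\textbf{Step 1 (proof of (i)).} Fix $(t,x)\neq(s,y)$. If $s\leq t$ then $\gamma_\G(s-t,\cdot)\equiv 0$ by property (i) of Theorem \ref{exTheoremC}, so the integral vanishes. For $s>t$, I apply \eqref{eq.Gaussest} to $w:=(x,0)^{-1}\star(y,\eta)$ and then change variable via $\eta=\Psi^{-1}_{x,y}(\eta')$; this has unit Jacobian and sends the last $p$ coordinates of $w$ to $\eta'$ itself, so \eqref{sservePsi} combined with the equivalence of the homogeneous norms $d$ and $h$ on $\G$ yields $d(w)\geq c_0\,N(\eta')$ for some $c_0>0$. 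Hence
\begin{equation*}
  \int_{\R^p}\gamma_\G\bigl(s-t,(x,0)^{-1}\star(y,\eta)\bigr)\,\d\eta \leq \c\,(s-t)^{-Q/2}\int_{\R^p}\exp\!\left(-\frac{c_0^2\,N(\eta')^2}{\c\,(s-t)}\right)\d\eta'.
\end{equation*}
A final rescaling $\eta'=\delta^*_{\sqrt{s-t}}(\eta'')$ absorbs a factor $(s-t)^{q^*/2}$ into the measure and reduces the exponent to $-c_0^2\,N(\eta'')^2/\c$; the surviving Gaussian integral in $\eta''$ is a finite absolute constant (a standard fact about the homogeneous norm $N$, since $|\{N\leq r\}|\simeq r^{q^*}$), so I arrive at a bound of the form $C(s-t)^{-q/2}<\infty$, which proves (i).

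\textbf{Step 2 (proof of (ii)).} Let $K\subset\R^{1+n}$ be compact and denote by $K_s\subset\R$ its projection onto the time axis. For each $s>t$, the map $(y,\eta)\mapsto(x,0)^{-1}\star(y,\eta)$ is a left translation on the Carnot group $\G$, hence a $C^\infty$-diffeomorphism of $\R^{n+p}$ with Jacobian identically $1$ (the $N$-dimensional Lebesgue measure is a Haar measure on $\G$). Combined with property (v) of Theorem \ref{exTheoremC}, this yields
\begin{equation*}
  \int_{\R^n\times\R^p}\gamma_\G\bigl(s-t,(x,0)^{-1}\star(y,\eta)\bigr)\,\d y\,\d\eta = \int_{\R^{n+p}}\gamma_\G(s-t,w)\,\d w = 1, \qquad \forall\,s>t.
\end{equation*}
Since $\gamma_\G(s-t,\cdot)\equiv 0$ for $s\leq t$, Tonelli gives $\int_{K\times\R^p}\gamma_\G \leq |K_s|<\infty$, proving (ii). The genuine obstacle lives entirely in Step 1 and is handled by the machinery of Remark \ref{sec.two_2:remChangeF}: without $\Psi_{x,y}$ the Gaussian factor would depend on $\eta$ in an opaque way and the saturation integral would resist direct estimation; Step 2, instead, is essentially a restatement of the fact that $\gamma_\G(s-t,\cdot)$ is a probability density on $\R^N$ for every $s>t$.
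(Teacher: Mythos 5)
Your proof is correct and follows essentially the same route as the paper: Gaussian upper bound plus the change of variable $\Psi_{x,y}$ and the lower bound \eqref{sservePsi} for (i), and Tonelli plus left-translation invariance and the unit-mass property for (ii). The only (harmless) difference is the last step of (i): you rescale by $\delta^*_{\sqrt{s-t}}$ to reduce to a fixed Gaussian integral of $N$, whereas the paper bounds the exponential by a power $N^{-Q}$ and uses a dyadic annulus argument, which has the side benefit of yielding the uniform pointwise bound \eqref{eq.crucialestim} exploited later.
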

 \begin{proof}
   We first prove that $\Gamma_\G$
   satisfies assumption (i). According to
   Theorem \ref{exTheoremA},
   we have to show that,
   for fixed $(t,x)\neq(s,y)\in\R^{1+n}$, one has
   \begin{equation} \label{eq.toprovestati}
   \eta\mapsto\Gamma_\G(t,x,0;s,y,\eta)\in L^1(\R^p).
   \end{equation}
   If $s\leq t$, the above \eqref{eq.toprovestati}
   is an immediate consequence
   of Theorem \ref{exTheoremC}, since
   $$\Gamma_\G(t,x,0;s,y,\eta) \stackrel{\eqref{eq.defiGammaGHtG}}{=}
    \gamma_\G\Big(s-t,(x,0)^{-1}\star(y,\eta)\Big) = 0, \quad
    \text{for every $\eta\in\R^p$}.$$
    We can then assume that $s > t$. In this case,
    by \eqref{eq.Gaussest} and by performing
    the change of variables
    $\eta = \Psi_{x,y}^{-1}(u)$
    (see \eqref{sec.two_2:def_mapPsixy} in Remark \ref{sec.two_2:remChangeF}), we obtain the estimate
    \begin{equation*}
     \begin{split}
     & \int_{\R^p}\Gamma_\G(t,x,0;s,y,\eta)\,\d\eta \\
    & \quad \leq \frac{\mathbf{c}}{(s - t)^{Q/2}}\int_{\R^p}
    \exp\left(-\frac{d^2\big((x,0)^{-1}\star(y,\Psi_{x,y}^{-1}(u)\big)}
    {\mathbf{c}\,(s - t)}
    \right)\d u.
   \end{split}
  \end{equation*}
  On the other hand,
  since $d$ is a homogeneous norm on $\G$, it is possible to
  find a constant $\alpha=\alpha(\G)>0$ such that,
  for every $u\in\R^p$ and every $x,y\in \R^n$,
  \begin{equation*}
   \begin{split}
    d^2\big((x,0)^{-1}\star(y,\Psi_{x,y}^{-1}(u)\big) &\geq\alpha\,
  h^2\big((x,0)^{-1}\star(y,\Psi_{x,y}^{-1}(u)\big)\stackrel{\eqref{sservePsi}}{\geq}
  \alpha\,N^2(u),
   \end{split}
  \end{equation*}
  where $h,N$ are as in \eqref{sec.two_2:eq_defcanonicalh}.
 Hence,
 \eqref{eq.toprovestati} will follow if we show that
  \begin{equation} \label{eq.toprovestatibis}
   u\mapsto
  \varphi(u) :=
  \exp\left(-\frac{\alpha\,N^2(u)}{\mathbf{c}(s-t)}\right)\in L^1(\R^p).
  \end{equation}
  Now, since $\varphi\in C(\R^p)$, we obviously have
  $\varphi\in L^1_{\loc}(\R^p)$; moreover,
  since $\exp(-|r|) \leq \beta_Q\,(1+|r|)^{-Q/2}$ (for some constant $\beta_Q>0$), we get
  $$\varphi(u) \leq \frac{\beta_Q\,\big(\mathbf{c}\,(s-t)\big)^{Q/2}}
  {\big(\mathbf{c}(s-t) + \alpha\,N^2(u)\big)^{Q/2}}
  \leq \beta\,(s-t)^{Q/2}\,N^{-Q}(u),
  \quad \text{$\forall\,\,u\in\R^p\setminus\{0\}$}.$$
  We are then left to prove that $N^{-Q}$ is integrable
  away from $0$, namely on the set $\{N\geq 1\}$.
  This follows from $Q>q^*$ and by a standard diadic/homogeneous argument using the annuli
  $$C_n := \{u\in \R^p: 2^{n-1}\leq N(u) < 2^n\}.$$
  To complete the proof, we are left to show that
  $\Gamma_\G$ also satisfies (ii) in Theorem \ref{exTheoremA}: for any fixed $(t,x)\in \R^{1+n}$
  and any compact set $K\subseteq\R^{1+n}$, we prove 
  $$((s,y),\eta)\mapsto \Gamma_\G(t,x,0;s,y,\eta)\in L^1(K\times\R^p).$$
  Let $a,b$ be such that $K\subseteq[a,b]\times\R^n$.
  We have (see (i)-(v) in Theorem \ref{exTheoremC})
  \begin{align*}
   \int_{K\times\R^p}\Gamma_\G(t,x,0&; s,y,\eta)\,\d s\,\d y\,\d \eta
    \leq \int_{a}^{b}\left(\int_{\R^n\times \R^p}\Gamma_\G(t,x,0;s,y,\eta)\,
   \d y\,\d \eta\right)\d s \\
   & = \int_{a}^{b}\left(\int_{\RN}\gamma_\G\Big(s-t,(x,0)^{-1}\star(y,\eta)\Big)\,
   \d y\,\d \eta\right)\d s \\
   & \big(\text{by the change of variables $(y,\eta) = (x,0)\star(u,v)$}\big) \\
   & = \int_{a}^{b}\left(\int_{\RN}\gamma_\G(s-t,u,v)\,\d u\,\d v\right)
   \d s \leq \int_a^{b}1\,\d s = b-a,
  \end{align*}
  and the proof is complete.
 \end{proof}
 \begin{remark} \label{rem.crucialestim}
  It is contained in the proof of Theorem \ref{thm.Gammaint}
  the following fact: there exists a constant
  $\beta> 0$ such that,
  for every $(t,x),(s,y)\in\R^{1+n}$ with $s > t$ and
  for every $u\in\R^p\setminus\{0\}$, one has
\begin{equation} \label{eq.crucialestim}
 \Gamma_\G\Big(t,x,0;s,y,\Psi_{x,y}^{-1}(u)\Big)
  = \gamma_\G\Big(s-t,(x,0)^{-1}\star(y,\Psi_{x,y}^{-1}(u))\Big)
  \leq \beta\,N(u)^{-Q}.
\end{equation}
  On the other hand, since $\gamma_\G$ identically vanishes
  on $\{t\leq 0\}$, the above estimate holds
  for every $(t,x)\in \R^{1+n}$ and every
  $(s,y,u)\in\R^{1+n+p}$.
 \end{remark}
 By gathering together Lemma \ref{lem.HtGsaturable},
 Theorem \ref{thm.Gammaint} and Theorem \ref{exTheoremA},
 we are in a position to prove the existence
 of a global fundamental solution for $\Ht$.
 \begin{theorem} [\textbf{Existence of a fundamental solution for $\Ht$}]
   \label{thm.existGammaHt}
 Let $\gamma_\G$, $\Gamma_\G$ and $d$ be as in Theorem \ref{exTheoremC}.
 Then the following function
\begin{equation}\label{defiGammaesplicitina}
  \Gamma(t,x;s,y) := \int_{\R^p}\Gamma_\G(t,x,0;s,y,\eta)\,\d\eta=
    \int_{\R^p}\gamma_\G\Big(s-t,(x,0)^{-1}\star(y,\eta)\Big)\,\d\eta
\end{equation}
   is a fundamental solution for $\Ht$. Moreover, one has the estimates
   \begin{equation*}
    \begin{split}
    \mathbf{c}^{-1}\,(& s-t)^{-Q/2}\,\int_{\R^p}\exp\Bigg(
     -\frac{\mathbf{c}\,d^2\big((x,0)^{-1}\star(y,\eta)\big)}{s-t}\Bigg)\,\d\eta
     \,\,\leq\,\,\Gamma(t,x;s,y) \\
     & \leq \mathbf{c}\,(s-t)^{-Q/2}\,\int_{\R^p}\exp\Bigg(
     -\frac{d^2\big((x,0)^{-1}\star(y,\eta)\big)}{\mathbf{c}\,(s-t)}\Bigg)\,\d\eta,
   \end{split}
   \end{equation*}
   holding true for every $(t,x),(s,y)\in\R^{1+n}$ with $s > t$.
   Here,
   $\mathbf{c} > 0$ is a constant only depending on
   the homogeneous Carnot group $\G$ and on the operator $\mathcal{H}$.
   Finally, $d$ can be replaced by any homogeneous norm on the
   homogeneous Carnot group $\G=(\RN,\star)$.
 \end{theorem}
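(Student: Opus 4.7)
The plan is to read off the conclusion of the theorem by combining the three pillars that have already been assembled above: the structural saturability of $\Ht_\G$ (Lemma \ref{lem.HtGsaturable}), the integrability of $\Gamma_\G$ over the lifting variables (Theorem \ref{thm.Gammaint}), and the general saturation machinery (Theorem \ref{exTheoremA}). All three hypotheses of Theorem \ref{exTheoremA} are thus verified for the pair $(P,\widetilde P)=(\Ht,\Ht_\G)$, so its conclusion applies: the saturation integral \eqref{sec.one:mainThm_defGamma} specializes in our setting to
\[
\Gamma(t,x;s,y)=\int_{\R^p}\Gamma_\G(t,x,0;s,y,\eta)\,\d\eta,
\]
and is a global fundamental solution for $\Ht$. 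The second expression in \eqref{defiGammaesplicitina} then follows by plugging in the group-convolution representation \eqref{eq.defiGammaGHtG} of $\Gamma_\G$.

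For the Gaussian bounds I would insert the pointwise estimate \eqref{eq.Gaussest} of Theorem \ref{exTheoremC} into this integral representation. For $s>t$, the identity \eqref{eq.defiGammaGHtG} lets one evaluate $\gamma_\G$ at $(s-t,(x,0)^{-1}\star(y,\eta))\in(0,\infty)\times\RN$, and the two-sided bound \eqref{eq.Gaussest} integrated in $\eta\in\R^p$ yields precisely the stated inequalities (the common factor $(s-t)^{-Q/2}$ comes out of the integral). The case $s\le t$ is trivial since $\gamma_\G$ vanishes there, in which case both sides of the estimates are zero if one interprets the exponential on the left as producing a finite integrable factor; more cleanly, one only states the estimates for $s>t$ as done in the theorem.

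Finally, the assertion that $d$ may be replaced by any homogeneous norm $d'$ on $\G$ is a consequence of the classical equivalence of homogeneous norms (see \cite[Prop.\,5.1.4]{BLUlibro}): there exists $\kappa>0$ with $\kappa^{-1}d\le d'\le\kappa\,d$ everywhere on $\RN$, and absorbing $\kappa^2$ into $\mathbf{c}$ yields the same two-sided Gaussian bound with $d'$ in place of $d$.

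I do not foresee any genuine obstacle at this stage: the heavy analysis --- the global lifting to a homogeneous Carnot group (Theorem \ref{exTheoremB}), the verification of (S.1)-(S.2) for the remainder $\LL_\G-\LL$, the non-trivial integrability estimate \eqref{eq.toprovestati} (which exploited the change of variable $\Psi_{x,y}$, the key inequality \eqref{sservePsi}, and the dyadic integrability of $N^{-Q}$ on $\{N\ge 1\}$ since $Q>q^*$), and Folland's Gaussian bounds from \cite{BLUpaper} --- has already been carried out. Theorem \ref{thm.existGammaHt} is essentially a clean restatement that packages these ingredients into the explicit saturation formula \eqref{defiGammaesplicitina} together with its quantitative Gaussian-type estimates.
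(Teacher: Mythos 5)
Your proposal is correct and coincides with the paper's own argument: the paper likewise obtains Theorem \ref{thm.existGammaHt} by combining Lemma \ref{lem.HtGsaturable} (hypotheses (S.1)--(S.2)), Theorem \ref{thm.Gammaint} (hypotheses (i)--(ii)) and the saturation Theorem \ref{exTheoremA}, and then derives the two-sided estimates by integrating the Gaussian bounds \eqref{eq.Gaussest} in $\eta$, with the equivalence of homogeneous norms absorbing constants into $\mathbf{c}$.
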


 \section{Representation formulas for the derivatives}\label{sec.DERIVPbHT}
 In this section, in order to prove Theorem \ref{teomainderiv}, we use
 a quite versatile technique, only based on homogeneity arguments.
 Some of our previous arguments (of dominated-convergence type)
 may be attacked with this technique also; how\-e\-ver, in the previous sections,  we preferred to contain
 the use of homogeneity, in view of future investigations where the latter is not available.

  The key ingredients for the proof of Theorem \ref{teomainderiv}
  are the following technical 
 Lemmas \ref{lemma.derivaaaa} and \ref{lem.liftingunderintegral}
  (where we use the notations in \eqref{eq.dsplitdeladelastar}
  and \eqref{delaprodottosaQQQQQQQQQQ}):
\begin{lemma}\label{lemma.derivaaaa}
 Let $\Omega:=\{(z,\zeta,\eta)\in \R^{1+n}\times \R^{1+n}\times \R^p\,:\,(z,0)\neq (\zeta,\eta)\}$.
 Suppose $g\in C^\infty(\Omega)$ is homogeneous of degree $\alpha<-q^*$ with respect to the family of dilations
 (with our usual notation)
 $$E_\lambda(z,\zeta,\eta)=E_\lambda\Big((t,x),(s,y),\eta\Big)= \Big(\lambda^2 t, \dela(x), \lambda^2 s, \dela(y), \delta^*_\lambda(\eta)\Big). $$
 Let $Z$ be any smooth vector field in the $(z,\zeta)$-variables, homogeneous of positive degree
 with respect to the family of dilations
 $$(z,\zeta)=\big((t,x),(s,y)\big)\mapsto \Big(\lambda^2 t, \dela(x),\lambda^2 s, \dela(y)\Big).$$
 Then, the following facts hold:
 \begin{enumerate}
   \item for any fixed $(z,\zeta)\in \R^{1+n}\times \R^{1+n}$ with $z\neq \zeta$, the map $\eta\mapsto g(z,\zeta,\eta)$
   belongs to $L^1(\R^p)$;
   \item $Z$ can pass under the integral sign as follows
 \begin{equation} \label{eq.ZpassIntegral}
  Z\bigg\{(z,\zeta)\mapsto \int_{\R^p} g(z,\zeta,\eta)\,\d\eta\bigg\}=
  \int_{\R^p} Z\Big\{(z,\zeta)\mapsto g(z,\zeta,\eta)\Big\}\,\d\eta,
  \end{equation}
  for every $z,\zeta\in\R^{1+n}$ with $z\neq\zeta$.
 \end{enumerate}
\end{lemma}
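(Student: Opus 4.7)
The strategy is to leverage the $E_\lambda$-homogeneity of $g$ together with the condition $\alpha<-q^{*}$ to obtain a polynomial decay bound of the form $|g(z,\zeta,\eta)|\leq C\, N(\eta)^{\alpha}$ as $N(\eta)\to\infty$; this estimate will be responsible both for the integrability asserted in part (1) and for producing the $\eta$-dominant needed to pass $Z$ under the integral sign in part (2).

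For part (1), I fix $(z,\zeta)$ with $z\neq\zeta$. Since $z\neq\zeta$ automatically forces $(z,0)\neq(\zeta,\eta)$ for every $\eta\in\R^{p}$, the map $\eta\mapsto g(z,\zeta,\eta)$ is smooth on all of $\R^{p}$, hence locally integrable. To control the behaviour at infinity, for $N(\eta)\geq 1$ I set $\lambda=1/N(\eta)\leq 1$ and use homogeneity of degree $\alpha$ to write
\[
  g(z,\zeta,\eta) \;=\; N(\eta)^{\alpha}\, g\bigl(E_{1/N(\eta)}(z,\zeta,\eta)\bigr).
\]
Since $\lambda\leq 1$ contracts the $(z,\zeta)$-components, and since the resulting $\eta$-component $\delta^{*}_{1/N(\eta)}(\eta)$ lies on the unit $N$-sphere (hence is nonzero, so the image is still in $\Omega$), the pulled-back points $E_{1/N(\eta)}(z,\zeta,\eta)$ remain in a fixed compact subset of $\Omega$ as $\eta$ varies. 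On such a compact subset $g$ is bounded by some $C=C(z,\zeta)$, which yields the desired $|g(z,\zeta,\eta)|\leq C\, N(\eta)^{\alpha}$. Integrability over $\{N\geq 1\}$ now follows by a dyadic-annulus decomposition on $\{2^{k-1}\leq N<2^{k}\}$, whose $\delta_{\lambda}^{*}$-homogeneous measure is comparable to $2^{kq^{*}}$, combined with $\alpha+q^{*}<0$.

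For part (2), a direct homogeneity calculation (using that $Z$ acts only in the $(z,\zeta)$ slots and is $E_\lambda$-homogeneous of some degree $\beta>0$, while $g$ is $E_\lambda$-homogeneous of degree $\alpha$) shows that $Zg$ is smooth on $\Omega$ and $E_\lambda$-homogeneous of degree $\alpha-\beta<\alpha<-q^{*}$. Part (1) applied to $Zg$ immediately gives $\eta\mapsto Zg(z,\zeta,\eta)\in L^{1}(\R^{p})$ for every $z\neq\zeta$. To justify \eqref{eq.ZpassIntegral} via the classical differentiation-under-the-integral-sign theorem, I fix a point $(z_{0},\zeta_{0})$ with $z_{0}\neq\zeta_{0}$ and a compact neighbourhood $U$ of it contained in the open set $\{z\neq\zeta\}$. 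Rerunning the compactness argument of part (1) \emph{uniformly} in $(z,\zeta)\in U$, all pulled-back points $E_{1/N(\eta)}(z,\zeta,\eta)$ with $(z,\zeta)\in U$ and $N(\eta)\geq 1$ lie in a single compact subset of $\Omega$; this produces the uniform estimate $|Zg(z,\zeta,\eta)|\leq C_{U}\, N(\eta)^{\alpha-\beta}$ on $U\times\{N(\eta)\geq 1\}$. Combined with continuity of $Zg$ on the compact set $U\times\{N(\eta)\leq 1\}$, this yields an integrable $\eta$-dominant valid throughout $U$, so $Z$ passes under the integral at $(z_{0},\zeta_{0})$; since $(z_{0},\zeta_{0})$ is arbitrary, \eqref{eq.ZpassIntegral} follows.

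The main technical point is the uniform compactness step in part (2): one has to check that the pullback trick of part (1) can be carried out with constants depending only on $U$ and not on the particular $(z,\zeta)\in U$. Once this is in place, both statements reduce to routine bookkeeping with the dilations $\delta_\lambda$, $\delta_\lambda^{*}$, and $E_\lambda$, together with the absence of singularities of $g$ away from the diagonal.
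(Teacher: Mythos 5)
Your proposal is correct and follows essentially the same route as the paper's own proof: the same homogeneity pullback $\lambda=1/N(\eta)$ onto a compact subset of $\Omega$ with $\eta$-component on $\{N=1\}$, the same dyadic integrability argument using $\alpha<-q^*$, and the same construction of a local-uniform dominant (boundedness on $U\times\{N\le 1\}$ by compactness, $C_U\,N(\eta)^{\alpha-\beta}$ on $\{N\ge 1\}$) to justify differentiation under the integral, after noting that $Zg$ is $E_\lambda$-homogeneous of degree $\alpha-\beta<-q^*$.
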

 \begin{proof}
 (1)\,\,Let us fix $z_0,\zeta_0\in\R^{1+n}$ such that $z_0\neq \zeta_0$ and let
 $S,\,N$ be the homogeneous norms introduced in \eqref{sec.two_2:eq_defcanonicalh}. Since, obviously,
 $\eta\mapsto g(z_0,\zeta_0,\eta)$ belongs to $L^1_{\loc}(\R^p)$, we need to prove that
 $$\int_{\{N > 1\}}g(z_0,\zeta_0,\eta)\,\d\eta < \infty.$$
 To this end, we first choose $\rho > 0$ in such a way that $z_0,\zeta_0\in\{S(z)\leq \rho\}$ and we observe that,
 since the set $K:= \{S\leq \rho\}^2\times\{N = 1\}$ is compact and contained in $\Omega$,
 there exists $c > 0$ such that
 \begin{equation} \label{eq.boundgcompact}
  |g(z,\zeta,\eta)|\leq c \qquad\text{for every $z,\zeta\in\{S\leq\rho\}$ and $\eta\in\{N = 1\}$}.
  \end{equation}
 On the other hand, if $\eta\in\R^p$ is such that $N(\eta) > 1$ and if we set $\lambda := 1/N(\eta)\in (0,1)$,
 it is readily seen that $(z_0',\zeta_0',\eta') = E_\lambda(z_0,\zeta_0,\eta)\in K$;
 thus, by \eqref{eq.boundgcompact} and the $E_\lambda$-homogeneity of $g$, we get
 $$|g(z_0,\zeta_0,\eta)| \leq c\,N(\eta)^{\alpha} \quad \text{for every $\eta\in\R^p$ with $N(\eta) > 1$}.$$
 Since $\alpha < -q^*$, the map $\eta\mapsto g(z_0,\zeta_0,\eta)$
 is integrable on $\{N > 1\}$, as desired. \medskip

 (2)\,\,We first prove that, if $Z$ is a smooth vector field as in the statement of the lemma,
 fixing $z,\zeta\in \R^{1+n}$ with $z\neq\zeta$,
 the function
 $$\Phi(\eta):= Z\{(z,\zeta)\mapsto g(z,\zeta,\eta)\}$$
 is $\eta$-integrable on the whole of $\R^p$.

 To this end we observe that, if we think of $Z$ as a vector field
 defined on $\R^{1+n}_z\times \R^{1+n}_\zeta\times \R^p_\eta$ but acting only in the $(z,\zeta)$ variables
 (and not on $\eta$), then
 $Z$ is $E_\lambda$-homogeneous of degree $m$; as a consequence,
 $\Phi$ is $E_\lambda$-homogeneous of degree $\alpha - m$.
 Since, by assumption, $m \geq 0$ and $\alpha < -q^*$, we derive from statement (1) that
 $\Phi(\eta)$  belongs to $L^1(\R^p)$ for every $z,\zeta\in \R^{1+n}$ with $z\neq\zeta$.
   We now turn to prove identity \eqref{eq.ZpassIntegral}. To this aim, we first write
   $$\int_{\R^p}\Phi(\eta)\,\d\eta =
   \int_{\{N(\eta)\leq 1\}}\Phi(\eta)\,\d\eta + \int_{\{N(\eta) > 1\}}\Phi(\eta)\,\d\eta.$$
   We then fix $z_0,\zeta_0\in\R^{1+n}$ such that $z_0\neq\zeta_0$ and we show that
   the function
   $\Phi$ can be dominated, both on $A = \{N\leq 1\}$
   and on $B = \{N > 1\}$, by an integrable function which does not depend of $(z,\zeta)$
   (at least for every $(z,\zeta)$ in a small neighborhood of $(z_0,\zeta_0)$). As for the first set,
   we choose $r > 0$ in such a way that
   $\overline{B(z_0,r)}\cap\overline{B(\zeta_0,r)}=\varnothing$ and we set $K :=
    \overline{B(z_0,r)}\times\overline{B(\zeta_0,r)}\times\{N\leq 1\}$. By the choice of $r$,
    we see that $K$ is a compact subset of $\Omega$; thus,
    there exists a constant $c > 0$ such that
    $$|\Phi(\eta)| =\Big|Z\{(z,\zeta)\mapsto g(z,\zeta,\eta)\}\Big|\leq c,$$
    for every $z,\zeta\in \overline{B(z_0,r)}\times\overline{B(\zeta_0,r)}$
    and every $\eta\in\{N\leq 1\}$.

    As for the set $B$, we argue as in the proof of the previous statement (1):
    if $\rho > 0$ is such that $z_0,\zeta_0\in\{P(z)\leq \rho\}$, from the $E_\lambda$-homogeneity
    of $\Phi$ we infer the existence of another constant
    $c' > 0$ such that
    $$|\Phi(\eta)| =\Big|Z\{(z,\zeta)\mapsto g(z,\zeta,\eta)\}\Big|\leq c'\,N(\eta)^{\alpha-m},$$
    for every $z,\zeta\in\{P\leq 1\}$ and every $\eta\in \{N > 1\}$;
    since
    $\alpha-m \leq \alpha < -q^*$, the function $N^{\alpha-m}$ is integrable
    on $B$. This ends the proof.
   \end{proof}
    \begin{lemma} \label{lem.liftingunderintegral}
    Let $\rho\in C^\infty(\R^{1+N}\setminus\{0\})$ be homogeneous
    of degree $d < -q^*$ with respect to the family
    of dilations \emph{(}see \eqref{eq.dsplitdeladelastar}\emph{)}
    $$F_\lambda(t,x,\xi) := \Big(\lambda^2t, D_\lambda(x,\xi)\Big) =
    \Big(\lambda^2t, \dela(x),\delta_\lambda^*(\xi)\Big).$$
    Then, for every $j = 1,\ldots,m$ we have
    \begin{equation} \label{eq.liftingunderint}
    \begin{split}
     & \int_{\R^p}X_j^y\Big\{y\mapsto \rho\Big(s-t,(x,0)^{-1}\star(y,\eta)\Big)\Big\}\,\d \eta \\
     & \qquad\qquad\qquad = \int_{\R^p}(Z_j\rho)\big(s-t,(x,0)^{-1}\star(y,\eta)\big)\,\d \eta,
     \end{split}
    \end{equation}
    where $Z_j$ is the lifting vector field of $X_j$ as in Theorem \ref{exTheoremB}.
   \end{lemma}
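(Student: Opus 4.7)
The plan is to reduce the claimed identity, via the left-invariance of $Z_j$, to showing that the $\R^p$-integral of a pure $\eta$-divergence vanishes. Fix $(t,x,s)$ and set $G(y,\eta):=\rho\bigl(s-t,(x,0)^{-1}\star(y,\eta)\bigr)$. Since $Z_j\in\LieG$, it is left-invariant on $\G$, whence
$$Z_j^{(y,\eta)}G(y,\eta)=(Z_j\rho)\bigl(s-t,(x,0)^{-1}\star(y,\eta)\bigr).$$
The lifting structure \eqref{Hiniziale2} decomposes $Z_j = X_j^y + R_j^{(y,\eta)}$, with $R_j=\sum_{\ell=1}^p b_{j\ell}(y,\eta)\,\de_{\eta_\ell}$ acting only on $\eta$ with polynomial coefficients $b_{j\ell}$. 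Hence $X_j^y G = (Z_j\rho)(\ldots) - R_j^{(y,\eta)}G$, and the identity \eqref{eq.liftingunderint} is equivalent to showing
$$\int_{\R^p} R_j^{(y,\eta)}G(y,\eta)\,\d\eta = 0.$$

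The key algebraic observation would be that $\div_\eta R_j \equiv 0$. Since $Z_j$ and the natural extension of $X_j$ to $\R^N$ are both $D_\lambda$-homogeneous of degree $1$, so is $R_j$; a short computation based on \eqref{eq.explicitdelastar} shows that each $b_{j\ell}$ is $D_\lambda$-homogeneous of degree $\sigma_\ell^* - 1$, and therefore $\de_{\eta_\ell}b_{j\ell}$ is $D_\lambda$-homogeneous of degree $-1$. Since $b_{j\ell}$ is a polynomial (left-invariant vector fields on a homogeneous Carnot group have polynomial components in exponential coordinates), any $D_\lambda$-homogeneous polynomial of negative degree must be identically zero; summing on $\ell$ gives $\div_\eta R_j \equiv 0$, so $R_j^{(y,\eta)}G = \div_\eta(b_j\,G)$ is a pure $\eta$-divergence.

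It remains to justify that the integral of this divergence vanishes; this boundary-term control is the main obstacle. The plan is to use the cut-off sequence $\theta_k(\eta):=\theta(\delta^*_{2^{-k}}(\eta))$ introduced in the proof of Lemma \ref{lem.HtGsaturable}. By compact support of $\theta_k$ and $\div_\eta R_j = 0$, integration by parts gives
$$\int_{\R^p}\theta_k\,(R_j G)\,\d\eta = -\int_{\R^p}(R_j\theta_k)\,G\,\d\eta.$$
The function $(z,\zeta,\eta)\mapsto \rho(s-t,(x,0)^{-1}\star(y,\eta))$ is $E_\lambda$-homogeneous of degree $d$ and its $R_j$-derivative is of degree $d-1$, both strictly below $-q^*$; hence Lemma \ref{lemma.derivaaaa}(1) yields $\eta$-integrability and the left-hand side converges to $\int R_j G\,\d\eta$ by dominated convergence. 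For the right-hand side, on the annular support $\{N(\eta)\sim 2^k\}$ of $\nabla_\eta\theta_k$, the $D_\lambda$-scaling of $b_{j\ell}$ and the dilation-adapted scaling of $\theta_k$ give $|R_j\theta_k|=O(2^{-k})$; combining with $|G|=O(2^{kd})$ on the annulus and the $\eta$-volume bound $O(2^{kq^*})$, one obtains an overall estimate $O(2^{k(d+q^*-1)})\to 0$ since $d<-q^*$. This yields the desired vanishing and thus establishes \eqref{eq.liftingunderint}.
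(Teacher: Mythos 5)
Your proposal is correct, and its overall architecture coincides with the paper's: both decompose $Z_j^{(y,\eta)}=X_j^y+R_j$ with $R_j=\sum_k r_{j,k}(y,\eta)\,\de_{\eta_k}$ acting only on $\eta$, use left-invariance of $Z_j$ to identify $Z_j^{(y,\eta)}\{\rho(s-t,(x,0)^{-1}\star(y,\eta))\}$ with $(Z_j\rho)(s-t,(x,0)^{-1}\star(y,\eta))$, and reduce the claim to $\int_{\R^p}R_j\{\cdots\}\,\d\eta=0$. Where you diverge is in the last step. The paper exploits the slightly stronger structural fact that each coefficient $r_{j,k}$, being a $D_\lambda$-homogeneous polynomial of degree $\sigma_k^*-1<\sigma_k^*$, is \emph{independent of $\eta_k$}; this lets it write $r_{j,k}\,\de_{\eta_k}\rho=\de_{\eta_k}(r_{j,k}\,\rho)$ and conclude by Fubini together with a one-dimensional fundamental-theorem-of-calculus argument, the boundary terms vanishing because $\rho$ (homogeneous of negative degree) vanishes at infinity and $\|(x,0)^{-1}\star(y,\eta)\|\to\infty$ as $\eta_k\to\pm\infty$. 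You instead extract only the weaker consequence $\sum_\ell\de_{\eta_\ell}b_{j\ell}\equiv 0$ (a $D_\lambda$-homogeneous polynomial of degree $-1$ is zero), so that $R_j$ is formally skew-adjoint in $\eta$, and then kill the resulting divergence by a cut-off $\theta_k=\theta\circ\delta^*_{2^{-k}}$ plus a quantitative dyadic estimate $O(2^{k(d+q^*-1)})$ on the annulus carrying $\nabla_\eta\theta_k$. Both routes are sound; the paper's is shorter, while yours is more robust (it only needs the divergence-free property and the homogeneity-based decay, not the per-variable independence of the coefficients) and is in the same spirit as the saturation argument of Theorem \ref{exTheoremA}. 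The only point worth making explicit in your write-up is that the identity is asserted for $(t,x)\neq(s,y)$, so that $G(y,\cdot)$ is smooth on all of $\R^p$ and the integration by parts against $\theta_k$ is legitimate.
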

   \begin{proof}
    First of all, by Lemma \ref{lemma.derivaaaa}-(1),
    the two integrand functions
    appearing in \eqref{eq.liftingunderint} are $\eta$-integrable on
    $\R^p$; moreover, since $Z_j$ is a lifting of $X_j$, one has
    $$Z_j^{(y,\eta)} = X_j^y + R_j, \qquad\text{where
    $R_j = \sum_{k = 1}^p r_{j,k}(y,\eta)\,\frac{\de }{\de \eta_k}$},$$
    where $r_{j,k}$ is smooth and $D_\lambda$-homogeneous
    of degree $\sigma^*_k-1$ (see \eqref{eq.explicitdelastar}). In particular,
    $r_{j,k}$ does not depend on $\eta_k$.
    Now, since $Z_j$ is left-invariant on the group $\G = (\RN,\star )$, it is not
    difficult to recognize that\footnote{In fact,
    $Z_j$ is left-invariant on the product group
    $(\R^{1+N},\bullet)$, where
    $$(t,x,\xi)\bullet(s,y,\eta) := (t+s, (x,\xi)\star (y,\eta)).$$}
    $$Z_j^{(y,\eta)}\Big\{(y,\eta)\mapsto
    \rho\Big(s-t,(x,0)^{-1}\star (y,\eta)\Big)\Big\}
    = (Z_j\rho)\Big(s-t,(x,0)^{-1}\star (y,\eta)\Big).$$
    As a consequence, we have the following chain of identities
    \begin{align*}
     & \int_{\R^p}X_j^y\Big\{y\mapsto \rho\big(s-t,(x,0)^{-1}\star (y,\eta)\big)\Big\}\,\d \eta
      \\
      & \quad = \int_{\R^p}(Z_j^{(y,\eta)}-R_j)
      \Big\{(y,\eta)\mapsto \rho\big(s-t,(x,0)^{-1}\star (y,\eta)\big)\Big\}\,\d \eta \\
     & \quad = \int_{\R^p}(Z_j\rho)\big(s-t,(x,0)^{-1}\star (y,\eta)\big)\,\d \eta
     \\
     & \quad\quad\quad-\int_{\R^p}R_j
     \Big\{\eta\mapsto
    \rho\big(s-t,(x,0)^{-1}\star (y,\eta)\big)\Big\}\,\d\eta.
    \end{align*}
    In view of this computation, the desired
    \eqref{eq.liftingunderint} follows if we show that
    \begin{equation} \label{eq.toproveintRnull}
     \int_{\R^p}R_j
     \Big\{\eta\mapsto
    \rho\big(s-t,(x,0)^{-1}\star (y,\eta)\big)\Big\}\,\d\eta = 0.
    \end{equation}
    In its turn, identity \eqref{eq.toproveintRnull}
    can be proved as follows: first of all,
    since $r_{j,k}$ is independent of
    $\eta_k$, by Fubini's theorem we can write
    \begin{align*}
     & \int_{\R^p}R_j
     \Big\{\eta\mapsto
    \rho\big(s-t,(x,0)^{-1}\star (y,\eta)\big)\Big\}\,\d\eta
    \\
    & \quad = \sum_{k = 1}^p
    \int_{\R^p}r_{j,k}(y,\eta)\,\frac{\de }{\de \eta_k}
     \Big\{\eta\mapsto
    \rho\big(s-t,(x,0)^{-1}\star (y,\eta)\big)\Big\}\,\d\eta \\
    & \quad = \sum_{k = 1}^p
    \int_{\R^p}\frac{\de }{\de \eta_k}
     \Big\{r_{j,k}(y,\eta)\,
    \rho\big(s-t,(x,0)^{-1}\star (y,\eta)\big)\Big\}\,\d\eta \\
    & \quad = \sum_{k = 1}^p
    \int_{\R^{p-1}}\bigg(\int_{-\infty}^\infty\frac{\de }{\de \eta_k}
     \Big\{r_{j,k}(y,\eta)\,
    \rho\big(s-t,(x,0)^{-1}\star (y,\eta)\big)\Big\}\,\d\eta_k\bigg)\d\widehat{\eta}_k,
    \end{align*}
    where $\widehat\eta_k$ denotes the $(p-1)$-tuple of variables obtained by removing
    $\eta_k$ from $\eta$.
    On the other hand, since $\rho$ vanishes at infinity
    (as it is $F_\lambda$-homogeneous of
    negative degree) and since
    $\|(x,0)^{-1}\star (y,\eta)\|\to\infty$ as $\eta_k\to\pm\infty$, one has
    \begin{align*}
     & \lim_{\eta_k\to\pm\infty}
     r_{j,k}(y,\eta)\,
    \rho\big(s-t,(x,0)^{-1}\star (y,\eta)\big)\\
    & \qquad
    =r_{j,k}(y,\eta)\cdot\lim_{\eta_k\to\pm\infty}
    \rho\big(s-t,(x,0)^{-1}\star (y,\eta)\big) = 0.
    \end{align*}
    This ends the proof.
   \end{proof}

   Thanks to Lemmas \ref{lemma.derivaaaa} and \ref{lem.liftingunderintegral},
   we can now provide the
   \begin{proof}[of Theorem \ref{teomainderiv}]
    For the sake of readability, we split the proof of
    formulas \eqref{eq.derGammays}-to-\eqref{eq.derGammatutte}
    into three different steps. \medskip

    \textsc{Step I:} We first prove formula
    \eqref{eq.derGammays}. To this end
    we observe that, by repeatedly applying Lemma \ref{lemma.derivaaaa},
    we have the representation
    \begin{align} 
     & \Big(\frac{\de}{\de s}\Big)^\alpha\Big(\frac{\de}{\de t}\Big)^\beta 
     X_{i_1}^y\cdots X_{i_h}^y
     \Gamma(t,x;s,y)\label{eq.derGammaysintermediate}\\
  & \quad = \int_{\R^p}\Big(\frac{\de}{\de s}\Big)^\alpha
     \Big(\frac{\de}{\de t}\Big)^\beta X_{i_1}^y\cdots X_{i_h}^y
  \Big\{(t,s,y)\mapsto
  \gamma_\G\big(s-t,(x,0)^{-1}\star (y,\eta)\big)\Big\}\,\d\eta \notag\\
  & \quad =
  (-1)^{\beta}
  \int_{\R^p}X_{i_1}^y\cdots X_{i_h}^y 
  \Big\{y\mapsto\big((\de_s)^\alpha
  (\de_t)^\beta \gamma_\G\big)\big(s-t,(x,0)^{-1}\star (y,\eta)\big)\Big\}\,\d\eta.\notag
    \end{align}
   Formula \eqref{eq.derGammays} can now be obtained from
    \eqref{eq.derGammaysintermediate} by repeatedly applying
    Lemma \ref{lem.liftingunderintegral}: in fact,
    on account of Theorem \ref{exTheoremC}-(iii)
    we know that the functions
    \begin{align*}
    & h_1 = (\de_s)^\alpha
  (\de_t)^\beta \gamma_\G,\quad h_2 = Z_{i_h}(\de_s)^\alpha
  (\de_t)^\beta\gamma_\G,\quad\ldots,\quad \\
  & \qquad\quad h_{h+1} = Z_{i_2}\cdots Z_{i_h}
  (\de_s)^\alpha
  (\de_t)^\beta\gamma_\G
  \end{align*}
  are smooth on $\R^{1+N}\setminus\{0\}$ and $F_\lambda$-homogeneous of degrees
  \begin{align*}
   & d_1 = -Q-2 \alpha -2 \beta ,\quad d_2 = -Q-2 \alpha -2 \beta -1,\quad \ldots,\\
   & \qquad\qquad d_{h+1} = -Q-2 \alpha -2 \beta -h+1,
   \end{align*}
  respectively. Since $Q = q+q^*$,
  we clearly have $d_1,\ldots,d_{h+1} < -q^*$. \medskip

  \textsc{Step II:} We prove formula \eqref{eq.derGammatx}.
  To this end, in order to apply Lemma \ref{lem.liftingunderintegral},
  we first introduce the following map:
  $$\phi_{x,y}:\R^p\to\R^p, \qquad\phi_{x,y}(u) := \pi_p\Big((x,0)\star (x,u)^{-1}\star (y,0)\Big),$$
  where $\pi_p$ is the projection
  of $\RN=\R^n\times\R^p$ onto $\R^p$. By exploiting
  the $D_\lambda$-homogeneity of the component functions of $\star $,
  it is not difficult to check that $\phi_{x,y}$ is a smooth diffeomorphism
  of $\R^p$, further satisfying
  $$\mathrm{det}\big|\mathcal{J}_{\phi_{x,y}}(u)\big| = 1, \quad\text{for every
  $u\in\R^p$}.$$
  Moreover, by using the explicit construction of the group $\G$
  in Theorem \ref{exTheoremA} (see \cite{BiagiBonfiglioliLast}
  for all the details), one can prove
  that
  $$(x,0)^{-1}\star (y,\phi_{x,y}(u)) = (x,u)^{-1}\star (y,0),\quad \forall\,\,x,y\in\R^n,\,u\in\R^p.$$
  Gathering together the above facts,
  and performing the change of variable $\eta = \phi_{x,y}(u)$, we then obtain
  the following alternative representation of $\Gamma$
  (also remind of the symmetry of $\gamma_\G$, see Theorem \ref{exTheoremC}-(ii)):
  \begin{equation} \label{eq.startingnewGamma}
  \begin{split}
    \Gamma(t,x;s,y) &= \int_{\R^p}\gamma_\G\big(s-t,(x,u)^{-1}\star (y,0)\big)\,\d u \\
   &= \int_{\R^p}\gamma_\G\big(s-t,(y,0)^{-1}\star (x,u)\big)\,\d u.
   \end{split}
   \end{equation}
   Now, starting from \eqref{eq.startingnewGamma} and
   repeatedly using Lemma \ref{lemma.derivaaaa}, we get
   \begin{equation*}
    \begin{split}
     & \Big(\frac{\de}{\de s}\Big)^\alpha\Big(\frac{\de}{\de t}\Big)^\beta 
     X_{j_1}^x\cdots X_{j_k}^x
     \Gamma(t,x;s,y)\\
  & \quad = \int_{\R^p}
  \Big(\frac{\de}{\de s}\Big)^\alpha\Big(\frac{\de}{\de t}\Big)^\beta 
   X_{j_1}^x\cdots X_{j_k}^x
  \Big\{(t,s,x)\mapsto
  \gamma_\G\big(s-t,(y,0)^{-1}\star (x,u)\big)\Big\}\,\d u \\
  & \quad =
  (-1)^{\beta}
  \int_{\R^p}X_{j_1}^x\cdots X_{j_k}^x \Big\{x\mapsto\Big((\de_s)^\alpha
  (\de_t)^\beta \gamma_\G\Big)\big(s-t,(y,0)^{-1}\star (x,u)\big)\Big\}\,\d u.
    \end{split}
   \end{equation*}
   From this, by repeatedly applying Lemma \ref{lem.liftingunderintegral} (with $x$ in place of
   $y$) and by arguing exactly as in the previous step,
   we obtain the desired \eqref{eq.derGammatx}. \medskip

   \textsc{Step III:} We finally prove formula \eqref{eq.derGammatutte}.
   To begin with, we use \eqref{eq.derGammays} and the change
   of variable $\eta = \phi_{x,y}(u)$ introduced in Step II to write
   \begin{align*}
    & \Big(\frac{\de}{\de s}\Big)^\alpha\,\Big(\frac{\de}{\de t}\Big)^\beta
    \,X^y_{i_1}\cdots X^y_{i_h}
   \Gamma(t,x;s,y) \\
   & \quad = (-1)^{\beta}\int_{\R^p}
   \bigg({Z}_{i_1}\cdots {Z}_{i_s}
   \Big(\frac{\de}{\de s}\Big)^\alpha\,\Big(\frac{\de}{\de t}\Big)^\beta\gamma_\G \bigg)\Big(s-t,
   (x,u)^{-1}\star (y,0)\Big) \,\d u \\
   & \quad \big(\text{setting $\rho := \big({Z}_{i_1}
   \cdots {Z}_{i_h}(\de_s)^\alpha(\de_t)^\beta\gamma_\G \big)\circ\widetilde{\iota}$}\,\big) \\
   & \quad = (-1)^{\beta}\int_{\R^p}\rho
   \big(s-t,
   (y,0)^{-1}\star (x,u)\big) \,\d u.
   \end{align*}
    From this, by repeatedly applying Lemma \ref{lem.liftingunderintegral}
    and by arguing exactly in Step II (notice that
    $\rho,Z_{i_k}\rho,\ldots,Z_{j_2}\cdots Z_{j_k}\rho$
    are all $F_\lambda$-homogeneous of degree less than $-q^*$), we
    obtain the desired \eqref{eq.derGammatutte}. This ends the proof.
   \end{proof}
 \section{An application to the Cauchy problem for $\Ht$}
 \label{sec.CauchyPbHT}
 In this section we turn our attention to the Cauchy problem for $\Ht$.
 In doing this, we shall use many of the properties of $\Gamma$
 in Theorem \ref{mainteo}, whose proof is postponed to Section \ref{sec.propertGamma}.

 To begin with, let $\varphi\in C(\R^n)$ and $\Omega =(0,\infty)\times \R^n$.
   We say that a function $u:\Omega\to\R$ is a (classical)
   solution of the Cauchy problem
   \begin{equation} \label{eq.CauchyPbHt}
    \begin{cases}
     \Ht u = 0 & \text{in $\Omega$} \\
     u(0,x) = \varphi(x) & \text{for $x\in\R^n$,}
    \end{cases}
   \end{equation}
   if the following conditions are satisfied:
  $u\in C^2(\Omega)$ and $\Ht u= 0$
    on $\Omega$; $u$ is continuous up to $\overline{\Omega}$ and $u(0,\cdot) = \varphi$
    pointwise on $\R^n$. By the $C^\infty$-hypoellipticity of $\Ht$, any classical solution
  of \eqref{eq.CauchyPbHt} is smooth on $\Omega$.
  The following theorem is the main result of this section.
  \begin{theorem} \label{thm.solCauchyPb}
   In the above notations,  if $\varphi$ is continuous and bounded, then
   \begin{equation} \label{eq.defuintegral}
    u: \Omega \longto\R
    \qquad u(t,x) := \int_{\R^n}\Gamma(0,y;t,x)\,\varphi(y)\,\d y
   \end{equation}
   is the unique bounded
   classical solution of \eqref{eq.CauchyPbHt};
   furthermore, it satisfies
     \begin{equation} \label{eq.uboundedsolCauchy}
    \sup_{\Omega} |u| \leq \sup_{\R^n} |\varphi|.
   \end{equation}
  \end{theorem}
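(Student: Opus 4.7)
The plan is to establish (a) the bound \eqref{eq.uboundedsolCauchy} and well-definedness of $u$, (b) smoothness of $u$ on $\Omega$ together with $\Ht u=0$, (c) continuity at $\{0\}\times\R^n$ with $u(0,\cdot)=\varphi$, and (d) uniqueness within the class of bounded classical solutions. The spatial symmetry (ii) of Theorem \ref{mainteo} combined with the normalisation (vii) gives $\int_{\R^n}\Gamma(0,y;t,x)\,\d y = \int_{\R^n}\Gamma(0,x;t,y)\,\d y = 1$ for every $t>0$; together with the non-negativity (i), this proves (a).

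For (b), differentiation under the integral sign is legitimated by the representation formulas \eqref{eq.derGammays} and \eqref{eq.derGammatx} of Theorem \ref{teomainderiv}, which express each $\de_t$- or $X_j^x$-derivative of $(t,x)\mapsto\Gamma(0,y;t,x)$ as a $\gamma_\G$-integral, together with the Gaussian bounds \eqref{eq.Gaussest} of Theorem \ref{exTheoremC} providing, for $(t,x)$ ranging over compacts of $\Omega$, dominating functions of $y$ that are uniformly integrable against $\varphi$. Since $t>0$ forces $(t,x)\neq(0,y)$ for every $y\in\R^n$, the fundamental-solution property and the $C^\infty$-hypoellipticity of $\Ht$ (Remark \ref{sec.two_1:remHormander}) yield $\Ht_{(t,x)}\Gamma(0,y;t,x)=0$ pointwise, whence $\Ht u\equiv 0$ on $\Omega$ by integration against $\varphi$; along the way one also obtains $u\in C^\infty(\Omega)$.

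Step (c) is the most delicate. Using (a) I would rewrite $u(t,x)-\varphi(x_0) = \int_{\R^n}\Gamma(0,y;t,x)[\varphi(y)-\varphi(x_0)]\,\d y$, fix $\varepsilon>0$, choose $\delta>0$ so that $|\varphi(y)-\varphi(x_0)|<\varepsilon$ on $|y-x_0|<\delta$, bound the contribution from $B(x_0,\delta)$ by $\varepsilon$, and show that the complementary part $2\|\varphi\|_\infty\int_{|y-x_0|\geq\delta}\Gamma(0,y;t,x)\,\d y$ tends to zero as $(t,x)\to(0,x_0)$. This last claim I would extract by passing to the $\G$-integral representation of $\Gamma$, performing the left-translation change of variable $(\zeta,\mu)=(x,0)^{-1}\star(y,\eta)$ (whose Jacobian equals $1$), and invoking the standard heat-kernel concentration $\int_{\{d(\zeta,\mu)\geq\rho\}}\gamma_\G(t,\zeta,\mu)\,\d\zeta\,\d\mu\to 0$ as $t\to 0^+$, which itself follows directly from the Gaussian bound \eqref{eq.Gaussest} by the scaling $\delta_{\sqrt t}^*$.

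For (d), if $v$ is a second bounded classical solution then $w:=u-v$ is bounded, smooth on $\Omega$, satisfies $\Ht w=0$ on $\Omega$ and vanishes at $t=0$. I would combine the Weak Maximum Principle for $\Ht$ on bounded cylinders (Remark \ref{sec.two_1:remHormander}) with a non-negative auxiliary $\Ht$-supersolution $\beta(t,x)$ diverging with $S(x)$; applying the maximum principle to $\pm w-\varepsilon\beta$ on cylinders $(0,T)\times\{S<R\}$ and letting first $R\to\infty$ (using $w$-boundedness against $\beta\to+\infty$ on the lateral boundary) and then $\varepsilon\to 0$ forces $w\equiv 0$ on every finite strip, hence on $\Omega$. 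Constructing such a barrier in full generality, compatible with the non-isotropic dilations $\dela$ and with the specific $X_j$'s (one natural strategy is to combine a $\dela$-parabolic-homogeneous polynomial in $(t,x)$ with suitable even-integer exponents $k_j$ on each $x_j$, coupled to a large positive multiple of $t$), is the main technical obstacle of the proof.
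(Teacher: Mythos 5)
Your steps (a) and (c) match the paper's argument in substance: the bound follows from properties (ii) and (vii) exactly as you say, and the attainment of the initial datum is proved in the paper by the same concentration-of-$\gamma_\G$ idea (the paper implements it via the change of variable $(u,v)=D_{\sqrt{t_n}}(u',v')$ and dominated convergence rather than a near/far splitting, but this is cosmetic). Two points, however, deserve attention.

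First, a genuine gap: your uniqueness argument (d) is not actually carried out. You reduce it to the construction of a non-negative $\Ht$-supersolution $\beta$ diverging with $S(x)$, compatible with the non-isotropic dilations and the specific $X_j$'s, and you yourself flag this construction as ``the main technical obstacle''. As it stands, the hardest step of the uniqueness proof is deferred, so the claim of uniqueness is unproven. The paper disposes of uniqueness by an entirely different and much shorter device (Proposition \ref{prop.uniquesol}): if $u$ is a bounded classical solution of the homogeneous problem, the constant extension $v(t,x,\xi):=u(t,x)$ is a bounded classical solution of the homogeneous Cauchy problem for the \emph{lifted} operator $\Ht_\G$ on the Carnot group $\R\times\G$ (precisely because $\Ht_\G$ lifts $\Ht$), and the known uniqueness theorem for bounded solutions on Carnot groups \cite[Theorem 2.1]{BLUpaper} gives $v\equiv 0$. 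This sidesteps any barrier construction on $\R^{1+n}$; if you insist on a direct argument, you must actually exhibit $\beta$, which for general homogeneous H\"ormander operators is nontrivial (the natural candidates involving powers $x_j^{k_j}$ produce, under $X_i^2$, cross terms whose sign is not controlled without further work).

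Second, a smaller but real under-justification in step (b): to differentiate under the $y$-integral you need local-uniform (in $(t,x)$) bounds on the $\de_t$- and $X^x$-derivatives of $\Gamma(0,y;t,x)$ by a function of $y$ integrable against $\varphi$. The Gaussian estimate \eqref{eq.Gaussest} of Theorem \ref{exTheoremC} controls only $\gamma_\G$ itself, not its derivatives, so the representation formulas of Theorem \ref{teomainderiv} alone do not deliver the domination you invoke; one would need Gaussian bounds for derivatives of the heat kernel on Carnot groups, which are available in the literature but are not stated in this paper. The paper avoids this entirely by proving $\Ht u=0$ only in $\mathcal{D}'(\Omega)$ (via Fubini and the defining identity \eqref{sec.two:eq_GreenGamma} of the fundamental solution, after first establishing continuity of $u$ by dominated convergence) and then upgrading to a classical solution by $C^\infty$-hypoellipticity. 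You should either import the derivative estimates explicitly or switch to the distributional route.
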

  \begin{proof}
 Since the uniqueness problem is of independent interest
 (and since we prove it with a totally different technique),
 this is postponed to Proposition \ref{prop.uniquesol}. Then we focus on the
 rest of the assertion.\medskip

 \noindent  First of all, by (ii), (vii) in Theorem \ref{mainteo},
 $u$ is well posed and it satisfies \eqref{eq.uboundedsolCauchy}:
   indeed, for $t>0$,
   \begin{align*}
   |u(t,x)|\leq  \|\varphi\|_{\infty}\,\int_{\R^n}\Gamma(0,y;t,x)\,\d y= \|\varphi\|_{\infty}\,\int_{\R^n}\Gamma(0,x;t,y)\,\d y =
    \|\varphi\|_{\infty}.
   \end{align*}
   The rest of the proof is split in three steps. \medskip

   \textsc{Step I:}\,\,In this step we prove that
   $u\in C(\Omega)$.
   To this end, let $z_0 = (t_0,x_0)$
   be a fixed point in $\Omega$ and let $r > 0$ be such that
   $$K:= [t_0-r,t_0+r]\times\overline{B}(x_0,r)\subseteq\Omega.$$
   Moreover, let $z_n\to z_0$; we can assume that $z_n\in K$. By arguing
   as in the proof of Lemma \ref{lem.intgralGammafur}-(ii),
   one can easily recognize that
   $$(y,\eta)\mapsto \Gamma_\G(0,y,0;t,x,\eta)\quad \text{is in $L^1(\R^{N})$, for every $(t,x)\in\R^{1+n}$}.$$
   Therefore, by Fubini's theorem, for every $n\geq 0$ we
   can write
   \begin{align*}
    u(z_n) &= u(t_n,x_n) \stackrel{\eqref{eq.defiGammaGHtG}}{=}
    \int_{\R^n\times \R^p}\gamma_\G\big(t_n,(y,0)^{-1}\star(x_n,\eta)\big)\,
    \varphi(y)\,\d y\,\d\eta\\
    &= \int_{\R^n\times \R^p}\gamma_\G(t_n,u,v)\,\varphi\big(C_{x_n}^{-1}(u,v)\big)
    \,\d u\,\d v,
      \end{align*}
  where we have use the smooth diffeomorphism $C_x(y,\eta) := (y,0)^{-1}\star(x,\eta)$
  (whose Jacobian de\-ter\-mi\-nant 
  is $1$). A dominated convergence argument is now in order; we skip the details,
  apart from the non-trivial estimate (based on the Gaussian bound in \eqref{eq.Gaussest})
   \begin{align*}
    & \big|\gamma_\G(t_n,u,v)\,\varphi\big(C_{x_n}^{-1}(u,v)\big)\big| \\
    & \quad \leq \mathbf{c}\,(t_0-r)^{-Q/2}\,\|\varphi\|_{\infty}\,
    \exp\left(-\frac{d^2(u,v)}{\mathbf{c}\,(t_0-r)}\right)=:f(u,v).
   \end{align*}
   In turn, the integrability of $f$ is ensured by the estimate
   $$f(u,v) \leq
   \exp\left(-\frac{\alpha\,S^2(u)}{\mathbf{c}\,(t_0-r)}\right)
   \exp\left(-\frac{\alpha\,N^2(v)}{\mathbf{c}\,(t_0-r)}\right),
   $$
   where $S,N$ are as in the proof of Theorem \ref{thm.Gammaint}
   (by arguing as in the few lines after \eqref{eq.toprovestatibis}, one
   gets the integrability of the above right-hand side).\medskip

   \textsc{Step II:}\,\,Since $u$ is continuous by Step I,
   if we show that $\Ht u = 0$ in $\mathcal{D}'(\Omega)$,
   the hypoellipticity of $\Ht$ will imply that   $u\in C^\infty(\Omega)$ and $\Ht u= 0$
   on $\Omega$. To this end, let $\psi\in C_0^\infty(\Omega)$.
   We have
   \begin{align*}
    & \int_{\R^{1+n}}u(t,x)\,\Ht^*\psi(t,x)\,\d t\,\d x \\
    & \qquad \,\,\,=\,\,\,\int_{\R^n}\left(\int_{\R^{1+n}}\Gamma(0,y;t,x)\,\Ht^*\psi(t,x)
    \,\d t\,\d x  \right)\varphi(y)\,\d y \\
    & \qquad \stackrel{\eqref{sec.two:eq_GreenGamma}}{=}
     -\int_{\R^n}\psi(0,y)\,\varphi(y)\,\d y =0
   \end{align*}
   Here we applied  Fubini's Theorem, whose legitimacy is due to the estimate (see also
   (vii) in Theorem \ref{mainteo})
   \begin{align*}
    \int_{\textrm{supp}(\psi)}\left(\int_{\R^n}\Gamma(0,x;t,y)\,\d y\right)\,\d t\,\d x
     \leq \meas(\textrm{supp}(\psi)) < \infty.
   \end{align*}

   \textsc{Step III:}\,\,To end the proof, we must show that $u$ satisfies the needed initial condition. To this end, let $x\in\R^n$
   be fixed and let $t_n  \in (0,1)$ be vanishing, as $n\to \infty$.
  Arguing as in Step I (and with the aid of
  (ii) and (vii) of Theorem \ref{mainteo}), one gets
   \begin{align*}
    & |u(t_n,x)-\varphi(x)|  \leq
    \int_{\R^n\times \R^p}\gamma_\G(t_n,u,v)\,
    \big|\varphi\big(C_x^{-1}(u,v)\big)-\varphi(x)\big|\,
    \d u\,\d v \\
    & \quad \stackrel{\eqref{eq.Gaussest}}{\leq}
    \mathbf{c}\,(t_n)^{-Q/2}\,
    \int_{\R^n\times \R^p}\exp\left(-\frac{d^2(u,v)}{\mathbf{c}\,t_n}\right)
    \big|\varphi\big(C_x^{-1}(u,v)\big)-\varphi(x)\big|\,
    \d u\,\d v\\
    & \quad \,\,\,=\,\,\mathbf{c}\,\int_{\R^n\times \R^p}\exp\left(-\frac{d^2(u',v')}{\mathbf{c}}\right)\,
   \big|\varphi\big((C_x^{-1}\circ D_{\sqrt{t_n}})(u',v')\big)-\varphi(x)\big|\,
    \d u'\,\d v'.
   \end{align*}
  In the last equality we used
  the change of variable
   $(u,v) = D_{\sqrt{t_n}}(u',v')$, and
   $D_\lambda$-homogeneity of $d$.
   Since one clearly has (due to the continuity of $\varphi$)
    $$\lim_{n\to\infty}\varphi\big((C_x^{-1}\circ D_{\sqrt{t_n}})(w,z)\big)
    = \varphi\big(C_x^{-1}(0,0)\big) = \varphi(x),$$
    we deduce that
    $u(t_n,x)\to \varphi(x)$, thanks to a dominated-convergence argument (see Step I) based on
    \begin{equation*}
     \begin{split}
      & \exp\left(-\frac{d^2(u',v')}{\mathbf{c}}\right)\,
   \big|\varphi\big((C_x^{-1}\circ D_{\sqrt{t_n}})(u',v')\big)
   -\varphi(x)\big| \\
   & \qquad \leq 2\,\|\varphi\|_{\infty}\,\exp\left(-\frac{d^2(u',v')}{\mathbf{c}}\right).
     \end{split}
    \end{equation*}
  This ends the proof.
  \end{proof}
   We now turn to the \emph{uniqueness} of the solution of the Cauchy problem
   for $\Ht$:
   \begin{proposition}\label{prop.uniquesol}
     The only bounded classical solution of \eqref{eq.CauchyPbHt} when $\varphi\equiv 0$
     is the null function.
     As a consequence, \eqref{eq.defuintegral} is the unique bounded solution of \eqref{eq.CauchyPbHt}.
   \end{proposition}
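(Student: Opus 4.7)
My plan is to combine the weak maximum principle for $\Ht$ on bounded open subsets of $\R^{1+n}$ (granted by Remark \ref{sec.two_1:remHormander}) with an exhaustion of $\R^{1+n}$ by $\dela$-adapted cylinders, closed off by a barrier function that is proper in the space variable and blows up uniformly in time. Let $u$ be a bounded classical solution of \eqref{eq.CauchyPbHt} with $\varphi\equiv 0$, put $M:=\sup_\Omega|u|$, and fix $T>0$ arbitrarily. The goal is to prove $u\equiv 0$ on $(0,T]\times\R^n$, after which the arbitrariness of $T$ gives $u\equiv 0$ on $\Omega$. The \emph{as a consequence} sentence of the proposition is then immediate: given any two bounded classical solutions of \eqref{eq.CauchyPbHt} (with a possibly nonzero $\varphi$), their difference is a bounded classical solution of the problem with $\varphi\equiv 0$ and hence vanishes; this applies in particular to the solution \eqref{eq.defuintegral} produced by Theorem \ref{thm.solCauchyPb}.

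The core of the argument is the construction of a non-negative barrier $w\in C^2([0,T]\times\R^n)$ of the form
\begin{equation*}
 w(t,x) := \rho(x) + C\,t,
\end{equation*}
satisfying $\Ht w\leq 0$ on $\Omega$ and $\inf_{t\in[0,T]}w(t,x)\to+\infty$ as $\|x\|\to\infty$. To build $\rho$, fix a $C^\infty$ strictly positive function $N\colon\R^n\setminus\{0\}\to(0,\infty)$, $\dela$-homogeneous of degree $1$ and proper in the sense that $\{N\leq R\}$ is compact for every $R>0$ (such an $N$ exists by the standard construction on $(\R^n,\dela)$; cf.\,\cite[Ch.\,5]{BLUlibro}), and set $\rho:=\chi\cdot\rho_0+(1-\chi)\cdot N^2$, where $\chi\in C_0^\infty(\R^n,[0,1])$ equals $1$ on a neighborhood of the origin and $\rho_0$ is a sufficiently large constant to make the gluing $C^2$ and $\rho$ non-negative. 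Because each $X_j$ is $\dela$-homogeneous of degree $1$, the function $\LL(N^2)=\sum_{j=1}^m X_j^2(N^2)$ is $\dela$-homogeneous of degree $0$ on $\R^n\setminus\{0\}$; by continuity on the compact $\dela$-sphere $\{N=1\}$ it is uniformly bounded on $\{N\geq 1\}$. Combining this with the smoothness of $\rho$ on the compact transition region yields $A:=\sup_{\R^n}\LL\rho<\infty$, and choosing $C\geq A$ gives $\Ht w=\LL\rho-C\leq 0$; properness of $N$ ensures the claimed uniform blow-up.

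With $w$ at hand, for every $\varepsilon>0$ set $v^\pm:=\pm u-\varepsilon w$; then $\Ht v^\pm=-\varepsilon\,\Ht w\geq 0$, so each $v^\pm$ is a subsolution of $\Ht$. Fix $(t_0,x_0)\in(0,T]\times\R^n$ and work on the (bounded) cylinder $Q_R:=(0,T)\times\{N<R\}$. On the parabolic base $\{0\}\times\overline{\{N\leq R\}}$ we have $v^\pm=-\varepsilon\,\rho\leq 0$; on the lateral boundary $[0,T]\times\{N=R\}$ we have $v^\pm\leq M-\varepsilon\min_{[0,T]\times\{N=R\}}w$, and the right-hand side is $\leq 0$ once $R$ is large enough, depending on $M,\varepsilon,T$, thanks to the uniform-in-$t$ blow-up of $w$. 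Choosing $R$ additionally large so that $(t_0,x_0)\in Q_R$, the weak maximum principle on $Q_R$ gives $v^\pm\leq 0$ on $\overline{Q_R}$, whence $|u(t_0,x_0)|\leq\varepsilon\,w(t_0,x_0)$; letting $\varepsilon\to 0^+$ forces $u(t_0,x_0)=0$, as desired.

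The main obstacle is the construction of $\rho$: the natural choice $\rho=N^2$ is generally not $C^2$ at the origin, while replacing $N^2$ by a higher power of $N$ would raise the $\dela$-degree of $\LL\rho$ above $0$ and so wreck its boundedness at infinity. The cutoff-and-gluing ansatz $\chi\,\rho_0+(1-\chi)\,N^2$ circumvents both difficulties at once, and its only non-routine input is the $\dela$-homogeneity-based estimate $\LL(N^2)\in L^\infty(\{N\geq 1\})$ which we have just isolated.
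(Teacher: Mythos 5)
Your proof is essentially correct but takes a genuinely different route from the paper. The paper's proof of Proposition \ref{prop.uniquesol} is a one-step reduction: it lifts $u$ to $v(t,x,\xi):=u(t,x)$, observes that $v$ is a bounded classical solution of the homogeneous Cauchy problem for the lifted operator $\Ht_\G$ on $\R\times\G$ (because $\Ht_\G$ lifts $\Ht$), and then invokes the known uniqueness theorem for bounded solutions of the Cauchy problem for heat operators on Carnot groups (\cite[Theorem 2.1]{BLUpaper}) to conclude $v\equiv 0$. You instead work directly on $\R^{1+n}$ with a barrier built from a smooth $\dela$-homogeneous norm $N$; your key observation that $\LL(N^2)$ is $\dela$-homogeneous of degree $0$, hence bounded on $\{N\geq 1\}$ by compactness of $\{N=1\}$, is exactly what makes the barrier work, and the cutoff near the origin correctly handles the lack of smoothness of $N^2$ there. (In the present setting the exponents $\sigma_j$ are positive integers, since $\dela$ is the restriction of the Carnot dilation $D_\lambda$ of \eqref{eq.dsplitdeladelastar}, so the smooth proper homogeneous norm you need does exist.) What the paper's route buys is brevity, by outsourcing the analysis to the Carnot-group literature in keeping with its lifting philosophy; what yours buys is self-containedness, using only the Weak Maximum Principle of Remark \ref{sec.two_1:remHormander} and homogeneity, with no reference to the lifted group. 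The ``as a consequence'' step is handled correctly.

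One step needs patching. The Weak Maximum Principle granted by Remark \ref{sec.two_1:remHormander} holds on arbitrary bounded open sets and therefore requires the sign condition on the whole topological boundary; for $Q_R=(0,T)\times\{N<R\}$ this includes the top $\{T\}\times\{N\leq R\}$, where you have not shown $v^\pm\leq 0$ (and it is not known a priori). The standard parabolic fix works: replace $v^\pm$ by $v^\pm_\mu:=v^\pm-\mu t$ with $\mu>0$, so that $\Ht v^\pm_\mu\geq\mu>0$ on a neighborhood of $\overline{Q_R}\cap\{t>0\}$; at a maximum point of $v^\pm_\mu$ lying in the interior or on the top one has $\sum_j X_j^2 v^\pm_\mu\leq 0$ (vanishing spatial gradient, nonpositive spatial Hessian) and $\de_t v^\pm_\mu\geq 0$, hence $\Ht v^\pm_\mu\leq 0$, a contradiction; so the maximum over $\overline{Q_R}$ is attained on the parabolic boundary, where $v^\pm_\mu\leq v^\pm\leq 0$, and letting $\mu\to 0^+$ recovers $v^\pm\leq 0$ on $\overline{Q_R}$. (Alternatively, add $\mu/(T+1-t)$ to the barrier $w$ and work on $(0,T+1)\times\{N<R\}$.) With this patch the argument is complete.
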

   \begin{proof}
    Let $u$ be a bounded classical solution
    of the homogeneous Cauchy pro\-blem for $\Ht$,
    and let
    $v(t,x,\xi):= u(t, x)$ defined on $\R\times \R^n\times \R^p$.

   Clearly, $v(0,x,\xi) = u(0,x) = 0$ for every $(x,\xi)\in\R^n\times \R^p$;
   moreover, since $\Ht_\G$ is a lifting of $\Ht$
    on $\R\times\R^n\times \R^p$, we get
 $\Ht_\G v = \Ht u = 0$ point-wise
    on $(0,\infty)\times\R^n\times \R^p$.
  Summing up, $v$ is a bounded solution
   of the homogeneous Cauchy problem for $\Ht_\G$.
   Since we have transferred our setting to that of Carnot groups $\G$,
   we are consequently entitled to apply
   \cite[Theorem 2.1]{BLUpaper}, which ensures that
   $v\equiv 0$, and this ends the proof.
   \end{proof}

\section{Further properties of $\Gamma$}\label{sec.propertGamma}
  This appendix is completely devoted to establishing the properties
  (i)-to-(x) of $\Gamma$ in Theorem \ref{mainteo}. Throughout the section, $\Gamma$ is as in
  \eqref{defiGammaesplicitinaPRE1} and all the notations used so far are tacitly understood.\medskip

  \noindent Some of the properties we aim to prove are consequences of Theorem \ref{exTheoremC}:
\begin{itemize}
 \item
 (i) is a trivial consequence of the integral form of $\Gamma$
 in \eqref{defiGammaesplicitinaPRE2} jointly with (i) in Theo\-rem~C.

\item
 The first part of (ii)
 comes from \eqref{defiGammaesplicitinaPRE2}; the symmetry in $x,y$ will be proved later on.

 \item
 (iii) follows from (iii) of Theorem \ref{exTheoremC} together with the change
  of variable $\eta= \delta^*_\lambda(\eta')$
  (see also  \eqref{eq.dsplitdeladelastar} and \eqref{delaprodottosaQQQQQQQQQQ}).

 \item
 (vii) follows from (v) of Theorem \ref{exTheoremC} by making use of
 the change
  of variable $(y,\eta) = (x,0)\star(y',\eta')$;

  \item
  (ix) has been proved in Section \ref{sec.CauchyPbHT}.
\end{itemize}
 Despite the simplicity of its statement, the proof of the following proposition is technical
 and is a prototype for many of the next proofs.
\begin{proposition} \label{prop.regGammaHt}
 The following facts hold true:
  \begin{itemize}
   \item[\emph{(a)}] $\Gamma$ is continuous out of the diagonal
   of $\R^{1+n}\times\R^{1+n}$.

   \item[\emph{(b)}] For every fixed compact set
   $K\subseteq\R^{1+n}$, we have
  $$\text{ $\sup_{z\in K}\Gamma(z;\zeta)\to 0$
   as $\|\zeta\|\to\infty$.}$$
   \item[\emph{(c)}] For every fixed $\zeta \in\R^{1+n}$, we have
   $
    \Gamma(z;\zeta)\to 0$  as $\|z\|\to\infty$.
      \end{itemize}
 \end{proposition}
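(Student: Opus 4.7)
The plan is to base everything on the defining integral
$\Gamma(t,x;s,y) = \int_{\R^p}\gamma_\G\bigl(s-t,(x,0)^{-1}\star(y,\eta)\bigr)\,\d\eta$
together with three ingredients that are all already available: the two-sided Gaussian bounds in \eqref{eq.Gaussest}, the change-of-variable $\eta = \Psi_{x,y}^{-1}(u)$ of Remark \ref{sec.two_2:remChangeF} (with unit Jacobian) combined with \eqref{sservePsi}, and the equivalence of any homogeneous norm on $\G$ with $h$, in the form of the pseudo-triangle inequality $d\bigl((x,0)^{-1}\star(y,\eta)\bigr) \ge c^{-1}d(y,\eta) - d(x,0)$.

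First I would derive a single scale-invariant bound that does the bulk of the work: applying the upper Gaussian estimate, then the substitution $\eta = \Psi_{x,y}^{-1}(u)$ followed by \eqref{sservePsi}, and finally the anisotropic rescaling $u = \delta^*_{\sqrt{s-t}}(u')$ (so $\d u = (s-t)^{q^*/2}\d u'$ and $N(u)^2 = (s-t)N(u')^2$), I obtain
\begin{equation*}
\Gamma(t,x;s,y) \;\le\; C\,(s-t)^{-q/2},\qquad s>t,
\end{equation*}
with $C$ independent of $(t,x,s,y)$. For (a), let $(z_n,\zeta_n)\to (z_0,\zeta_0)$ with $z_0\neq\zeta_0$. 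Pointwise convergence of the integrand follows from smoothness of $\gamma_\G$ off the origin plus its continuous vanishing across $\{\tau\le 0\}$ (since $\tau^{-Q/2}\exp(-d^2/\tau)\to 0$ as $\tau\to 0^+$ when $d\neq 0$); the hypothesis $z_0\neq\zeta_0$ prevents any $\eta$ from sending the argument to the origin of $\R^{1+N}$. For domination on a neighbourhood of $(z_0,\zeta_0)$, I combine the Gaussian bound with the pseudo-triangle inequality: for $(x,y)$ in a compact set this yields Gaussian decay in $N(\eta)$, uniform in the parameters, providing an integrable majorant.

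For (b), fix compact $K\subset\R^{1+n}$, take $z=(t,x)\in K$ and $\zeta=(s,y)$ with $\|\zeta\|\to\infty$. The case $s\le t$ is trivial. Otherwise I split into two regimes, noting that every divergent sequence falls into at least one. If $|s|\to\infty$ along a subsequence, then $\tau = s-t\to+\infty$ uniformly in $z\in K$ (since $t$ is bounded on $K$), and the preliminary estimate gives $\sup_{z\in K}\Gamma(z;\zeta)\le C\tau^{-q/2}\to 0$. If instead $\tau$ remains bounded but $\|y\|\to\infty$, refining the Gaussian estimate using $d\bigl((x,0)^{-1}\star(y,\eta)\bigr)\ge c^{-1}S(y) - d(x,0)$ and integrating the remaining Gaussian in $\eta$ produces a bound of the form
\begin{equation*}
\Gamma(t,x;s,y) \;\le\; C'\,\tau^{-q/2}\exp\!\bigl(-c''\,S(y)^2/\tau\bigr),
\end{equation*}
which tends to $0$ uniformly for $(t,x)\in K$ and $\tau\in(0,M]$ as $S(y)\to\infty$, because the exponential decay dominates the polynomial blow-up in $\tau^{-q/2}$ (including in the delicate sub-regime $\tau\to 0^+$).

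Part (c) is entirely analogous with the roles of $x$ and $y$ swapped: $|t|\to\infty$ with $\zeta$ fixed forces $\tau\to+\infty$ and the $\tau^{-q/2}$ bound closes the case, while $\|x\|\to\infty$ with $\tau$ bounded is handled by the Gaussian decay $d\ge c^{-1}S(x) - C$. The main obstacle is the sub-case of (b) and (c) in which $\tau$ is bounded but $\tau\to 0^+$ is permitted along with $S(y)$ (resp.\ $S(x)$) $\to\infty$; this is what forces the refined Gaussian-in-$\eta$ computation above rather than a mere appeal to $\Gamma\le C\tau^{-q/2}$. Everything else amounts to a careful case-split plus dominated convergence, using the ingredients already assembled in Section \ref{sec.existGamma}.
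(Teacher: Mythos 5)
Your treatment of (a) and (b) is essentially sound, and it is deliberately more quantitative than the paper's: the paper proves all three parts by dominated convergence, using the change of variable $\eta=\Psi_{x,y}^{-1}(u)$, the pointwise vanishing of $\gamma_\G$ at infinity, and crucially the $\tau$-uniform scale-invariant majorant $\gamma_\G\big(s-t,(x,0)^{-1}\star(y,\Psi_{x,y}^{-1}(u))\big)\le\beta\,N(u)^{-Q}$ of Remark \ref{rem.crucialestim} (which is exactly your preliminary computation stopped one step before integrating in $u$). Your explicit bounds $\Gamma\le C\,\tau^{-q/2}$ and $\Gamma\le C'\,\tau^{-q/2}\exp(-c''S(y)^2/\tau)$ are correct and give uniform decay rates that the qualitative argument does not. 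One small caveat for (a): in the sub-case $s_0-t_0=0$, $x_0\neq y_0$, your ``Gaussian decay in $N(\eta)$'' majorant degenerates as $\tau\to0^+$ on the compact region where $N(\eta)$ is small; there you must additionally use that $\inf_\eta d\big((x,0)^{-1}\star(y,\eta)\big)>0$ locally uniformly off the diagonal, or simply revert to the $N(u)^{-Q}$ majorant.

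The genuine gap is in (c), in the regime ``$\tau$ bounded, $\|x\|\to\infty$''. The inequality you invoke, $d\big((x,0)^{-1}\star(y,\eta)\big)\ge c^{-1}S(x)-C$ with $C$ independent of the integration variable $\eta$, is not what the pseudo-triangle inequality provides. Writing $(y,\eta)=(x,0)\star\big[(x,0)^{-1}\star(y,\eta)\big]$ gives $d\big((x,0)^{-1}\star(y,\eta)\big)\ge c^{-1}d(y,\eta)-d(x,0)$, which is why (b) works ($y$ and $\eta$ occupy the same slot, and the subtracted term $d(x,0)$ is controlled on $K$); for (c) the same device gives $\ge c^{-1}d(x,0)-d(y,\eta)$, whose error term $d(y,\eta)\approx S(y)+N(\eta)$ is unbounded over $\eta\in\R^p$. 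The roles of $x$ and $y$ in \eqref{defiGammaesplicitinaPRE1} are \emph{not} symmetric, precisely because the saturation variable is attached to $y$ — indeed, Theorem \ref{thm.symmetryGammax} (the $x\leftrightarrow y$ symmetry of $\Gamma$) is proved \emph{after}, and using, the present proposition, so you cannot appeal to symmetry here. Two honest repairs: (i) use the swapped representation \eqref{eq.startingnewGamma}, $\Gamma(t,x;s,y)=\int_{\R^p}\gamma_\G\big(s-t,(y,0)^{-1}\star(x,u)\big)\,\d u$, which relies only on Theorem \ref{exTheoremC}-(ii) and the map $\phi_{x,y}$ and is available at this stage; then (c) genuinely becomes (b) with $x\leftrightarrow y$ and your quantitative argument applies verbatim. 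Or (ii) follow the paper: the map $(t,x,u)\mapsto\big(s-t,(x,0)^{-1}\star(y,\Psi_{x,y}^{-1}(u))\big)$ is a proper diffeomorphism of $\R^{1+n+p}$, so its value diverges uniformly in $u$ as $\|(t,x)\|\to\infty$; combined with the $\tau$-uniform majorant $\beta N(u)^{-Q}$, dominated convergence closes (c) without any quantitative rate.
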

 \begin{proof}
  	(a)\,\,
   It is a dominated-convergence argument applied to the limit
   $$\lim_{n\to\infty}\Gamma(z_n;\zeta_n)  =\lim_{n\to\infty}
   \int_{\R^p}
  	 \gamma_\G(s_n-t_n,(x_n,0)^{-1}\star(y_n,\eta))\,\d\eta,$$
  where $z_n=(t_n,x_n)\to z_0$, $\zeta_n=(s_n,y_n)\to \zeta_0$ and $z_0\neq \zeta_0$; this argument
  is based on the ingredients:
   \begin{itemize}
     \item[-] a proper use of the change of variable $\eta=\Psi^{-1}_{x_n,y_n}(\eta')$  in
     Remark \ref{sec.two_2:remChangeF};
     \item[-] the continuity of $\gamma_\G$ out of the origin of $\R^{1+N}$;
     \item[-] the bound \eqref{eq.crucialestim} in Rem.\,\ref{eq.crucialestim}
     (together with the integrability of $N^{-Q}(\eta')$ on the set
     $\{N(\eta')>1\}$).
   \end{itemize}
	
  	(b)\,\,   It is dominated-convergence, applied to the right-hand limit
   $$\lim_{n\to\infty}\sup_{z\in K}\Gamma(z;\zeta_n)  \leq
   \lim_{n\to\infty}
   \int_{\R^p}
  	\sup_{(t,x)\in K} \gamma_\G(s_n-t,(x,0)^{-1}\star(y_n,\eta))\,\d\eta,$$
  where $z=(t,x)$, $\zeta_n=(s_n,y_n)\to \infty$ and $K$ is compact in $\R^{1+n}$; we also used:
   \begin{itemize}
     \item[-] another use of the change of variable $\eta=\Psi^{-1}_{x,y_n}(\eta')$;

     \item[-] the vanishing of $\gamma_\G$ at infinity (see (iv) in Theorem \ref{exTheoremC}), together with the
     change of variable $\eta = \Phi_{x,y_n}(\eta')$ and the fact that
\begin{equation}\label{infinitosanxz}
   \lim_{n\to\infty}\|(s_n-t, (x,0)^{-1}\star(y_n,\Psi_{x,y_n}(\eta')))\|=\infty,
\end{equation}
   uniformly for $z\in K$ and $\eta'\in\R^p$;
     \item[-] the bound \eqref{eq.crucialestim} in Rem.\,\ref{eq.crucialestim}.
   \end{itemize}

  	(c)\,\, This is similar to (b);
  \eqref{infinitosanxz} is replaced by the (weaker)
  information
  $$   \lim_{n\to\infty}\|(s-t_n, (x_n,0)^{-1}\star(y,\Psi_{x_n,y}(\eta')))\|=\infty\quad \text{uniformly for $\eta'\in\R^p$},$$
  for any fixed $\zeta=(s,y)$. This ends the proof.
 \end{proof}
 \begin{corollary}\label{cor.GammaHtSmooth}
  For every fixed $z\in\R^{1+n}$, the map
  $\zeta\mapsto\Gamma(z;\zeta)$
  is smooth and $\Ht$-harmonic on
  $\R^{1+n}\setminus\{z\}$ (i.e., $\Ht(\Gamma(z;\cdot))=0$ on $\R^{1+n}\setminus\{z\}$).
 \end{corollary}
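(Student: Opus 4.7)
The plan is to reduce the statement to the hypoellipticity of $\Ht$, which is available by Remark \ref{sec.two_1:remHormander}. Since $\Gamma(z;\cdot)\in L^1_{\loc}(\R^{1+n})$ for every fixed pole $z$ (this is built into the definition of a fundamental solution, and is reconfirmed by property (vi) of Theorem \ref{mainteo}), $\Gamma(z;\cdot)$ defines a distribution on the open set $\R^{1+n}\setminus\{z\}$, and it suffices to check that it solves $\Ht u=0$ there in the weak sense.

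The key observation is that the defining distributional identity of $\Gamma$ specializes neatly on $\R^{1+n}\setminus\{z\}$: for every test function $\varphi\in C_0^\infty(\R^{1+n}\setminus\{z\})$ one has $\varphi(z)=0$, and hence, by Theorem \ref{mainteo},
\begin{equation*}
 \int_{\R^{1+n}} \Gamma(z;\zeta)\,\Ht^*\varphi(\zeta)\,\d\zeta \;=\; -\varphi(z) \;=\; 0.
\end{equation*}
This is exactly the assertion that $\Ht(\Gamma(z;\cdot))=0$ in $\mathcal{D}'(\R^{1+n}\setminus\{z\})$.

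At this point I would invoke the $C^\infty$-hypoellipticity of $\Ht$ on open subsets of $\R^{1+n}$, recorded in Remark \ref{sec.two_1:remHormander}: since $\Gamma(z;\cdot)$ is a distributional solution of $\Ht u=0$ on the open set $\R^{1+n}\setminus\{z\}$, it is (a.e.\ equal to) a function in $C^\infty(\R^{1+n}\setminus\{z\})$. The continuity statement of Proposition \ref{prop.regGammaHt}(a) shows that $\Gamma(z;\cdot)$ is already continuous off $z$, so this upgrade to smoothness is consistent and pointwise everywhere. Once smoothness is established, the weak identity $\Ht(\Gamma(z;\cdot))=0$ becomes a pointwise identity on $\R^{1+n}\setminus\{z\}$, proving the corollary.

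There is no real obstacle here: the argument is essentially a direct consequence of the definition of $\Gamma$ and of Hörmander's hypoellipticity. The only point that requires (minor) care is checking that hypoellipticity applies to $\Ht$ on \emph{any} open subset of $\R^{1+n}$, which is precisely what Remark \ref{sec.two_1:remHormander} guarantees via (H.1)--(H.2). Alternatively, one could bypass hypoellipticity and differentiate under the integral sign in \eqref{defiGammaesplicitinaPRE1} using the representation formulas of Theorem \ref{teomainderiv}, but the above route via hypoellipticity is the shortest.
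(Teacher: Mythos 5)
Your argument is correct and is essentially the paper's own proof: both rest on the $C^\infty$-hypoellipticity of $\Ht$ applied to the distributional identity $\Ht(\Gamma(z;\cdot))=0$ on $\R^{1+n}\setminus\{z\}$ (which follows from the defining property of $\Gamma$ since $\varphi(z)=0$ for test functions supported off the pole), together with the continuity from Proposition \ref{prop.regGammaHt}(a) to upgrade the a.e.\ identification with a smooth function to a pointwise one. You merely spell out the intermediate weak identity that the paper leaves implicit.
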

 \begin{proof}
 By the $C^\infty$-hypoellipticity of $\Ht$, we infer that
 $\Gamma(z;\cdot)$ coincides almost everywhere with a smooth
 $\Ht$-harmonic function on  $\R^{1+n}\setminus\{z\}$;
 the `almost-everywhere' can be dropped in view of (a)
 in Proposition \ref{prop.regGammaHt}.
 \end{proof}
 The following results (a) and (c) establish property (vi) of Theorem \ref{mainteo}, whereas (b)
 is technical for the study of the Cauchy problem for $\Ht$.
 \begin{lemma}\label{lem.intgralGammafur}
The following facts hold true:
  \begin{itemize}
   \item[\emph{(a)}] $\Gamma\in L^1_{\loc}(\R^{1+n}\times\R^{1+n})$.

   \item[\emph{(b)}] For every fixed $(s,y)\in\R^{1+n}$, we have
   \begin{equation} \label{eq.toprovebestat}
   (t,x,\eta)\mapsto \Gamma_\G(t,x,0;s,y,\eta)\in L^1_{\loc}(\R^{1+n+p}).
   \end{equation}

   \item[\emph{(c)}] For every
   fixed $\zeta\in\R^{1+n}$, we have $\Gamma(\cdot;\zeta)\in
   L^1_{\loc}(\R^{1+n})$.
  \end{itemize}
 \end{lemma}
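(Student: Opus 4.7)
The plan is to reduce each of the three assertions to the identity $\int_{\R^N}\gamma_\G(\tau,u,v)\,\d u\,\d v=1$ for $\tau>0$ (Theorem~\ref{exTheoremC}-(v)), by combining Fubini with suitable measure-preserving changes of variables in the Carnot group $\G=(\R^N,\star)$.

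For (a), I would fix compact sets $K\subseteq[a,b]\times B_x$ and $K'\subseteq[c,d]\times B_y$ (with $B_x,B_y\subseteq\R^n$ compact) and, by Fubini and the nonnegativity of $\gamma_\G$, bound
\begin{equation*}
\int_{K\times K'}\Gamma(z;\zeta)\,\d z\,\d\zeta
\,\leq\, \int_a^b\int_c^d\int_{B_x}\left(\int_{\R^n\times\R^p}\gamma_\G\big(s-t,(x,0)^{-1}\star(y,\eta)\big)\,\d y\,\d\eta\right)\d x\,\d s\,\d t.
\end{equation*}
For each fixed $x$ the inner map $(y,\eta)\mapsto (x,0)^{-1}\star(y,\eta)$ is the left translation by $(x,0)^{-1}$ in $\G$, hence a polynomial diffeomorphism of $\R^N$ preserving Lebesgue measure; the inner integral thus equals $\int_{\R^N}\gamma_\G(s-t,u,v)\,\d u\,\d v\leq 1$, yielding the bound $(b-a)(d-c)|B_x|<\infty$. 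This is exactly the mechanism already used in the proof of Theorem~\ref{thm.Gammaint}-(ii).

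For (c), fix $\zeta=(s,y)$ and compact $K\subseteq[a,b]\times B_x$, and bound
\begin{equation*}
\int_K\Gamma(z;\zeta)\,\d z \,\leq\, \int_a^b\left(\int_{\R^n\times\R^p}\gamma_\G\big(s-t,(x,0)^{-1}\star(y,\eta)\big)\,\d x\,\d\eta\right)\d t.
\end{equation*}
Now the relevant change of variables $(x,\eta)\to(u,v):=(x,0)^{-1}\star(y,\eta)$ is with $y$ fixed, and it is \emph{not} a left translation in $\G$. The crucial (and main) claim is that it is nevertheless a smooth diffeomorphism of $\R^N$ with Jacobian of absolute value $1$. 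To see this, I would read off the explicit formulas \eqref{sec.two_2:eq_expressionconvmap}: ordering the input coordinates $x_1,\ldots,x_n,\eta_1,\ldots,\eta_p$ by weight $\sigma_i,\sigma_k^*$ in ascending order (and the outputs by the same ordering), and using that $p_i$ depends only on variables of weight strictly less than $\sigma_i$ while $q_k$ depends only on those of weight strictly less than $\sigma_k^*$, the Jacobian matrix becomes lower-triangular, with diagonal $-1$ on $x$-slots (from $u_i=y_i-x_i+p_i$) and $+1$ on $\eta$-slots (from $v_k=\eta_k+q_k$). Hence $|\det|=1$, the inner integral reduces once more to $\int_{\R^N}\gamma_\G\leq 1$, and $\int_K\Gamma\leq b-a<\infty$.

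For (b), the map $\Phi_{s,y}\colon (t,x,\eta)\mapsto(s-t,(x,0)^{-1}\star(y,\eta))$ combines $t\mapsto s-t$ with the $(x,\eta)$-diffeomorphism from the previous paragraph, so it is a smooth diffeomorphism of $\R^{1+N}$ with $|\det J_{\Phi_{s,y}}|=1$. Consequently (b) is equivalent to $\gamma_\G\in L^1_{\loc}(\R^{1+N})$, which is an immediate consequence of Theorem~\ref{exTheoremC}-(i),(v): $\gamma_\G$ vanishes on $\{t\leq 0\}$, and Fubini yields $\int_0^T\!\int_{\R^N}\gamma_\G\,\d u\,\d v\,\d\tau=T<\infty$ for every $T>0$. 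Thus the only real obstacle in the whole argument is the Jacobian computation for the non-left-invariant change of variables used in (b) and (c); once this is secured via the graded polynomial structure recorded in \eqref{sec.two_2:eq_expressionconvmap}, both assertions follow at once, and (a) is handled by the easier left-translation change of variables.
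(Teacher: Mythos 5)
Your proposal is correct, and for parts (a) and (b) it follows essentially the paper's own route: (a) is the paper's Tonelli argument with property (vii) of Theorem \ref{mainteo} unrolled into the underlying left-translation change of variables, and (b) is exactly the paper's reduction via the measure-preserving diffeomorphism $H_y(t,x,\eta)=(s-t,(x,0)^{-1}\star(y,\eta))$ to the local integrability of $\gamma_\G$. Where you genuinely diverge is in (c). The paper keeps $(t,x)$ ranging over the compact $K$ and changes variables only in $\eta$ via $\Psi_{x,y}^{-1}$, then splits the $u$-integral into $\{N\leq 1\}$ (handled by (b), since the preimage is contained in a compact set) and $\{N>1\}$ (handled by the pointwise bound \eqref{eq.crucialestim}, i.e.\ $\gamma_\G\le\beta N(u)^{-Q}$ after the change of variables, plus the integrability of $N^{-Q}$ at infinity). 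You instead enlarge the $x$-integration to all of $\R^n$, perform the full $(x,\eta)\mapsto(x,0)^{-1}\star(y,\eta)$ change of variables at fixed $y$, and invoke the unit total mass of $\gamma_\G(\tau,\cdot)$; this is shorter and bypasses \eqref{eq.crucialestim} entirely. Both arguments hinge on the same nontrivial fact — that this non-left-translation map is a global diffeomorphism of $\R^N$ with unit Jacobian — which the paper asserts (citing Remark \ref{sec.two_2:remChangeF}) in its proof of (b) and which you justify correctly via the weight-graded triangular structure of \eqref{sec.two_2:eq_expressionconvmap}. The paper's splitting technique is not wasted effort, though: the same $\Psi_{x,y}$/\eqref{eq.crucialestim} machinery is reused in Proposition \ref{prop.regGammaHt} and elsewhere, whereas your total-mass shortcut is specific to integrals over the full space slice.
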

 \begin{proof}
  (a)\,\,Let $K_1,K_2\subseteq\R^{1+n}$ be compact sets
  and let $T > 0$ be so large that $K_2\subseteq[-T,T]\times\R^{n}$.
  By Tonelli's Theorem and (vii) in Theorem \ref{mainteo}, we have
  \begin{align*}
   \int_{K_1\times K_2}\Gamma(z;\zeta)\,\d z\,\d \zeta  \leq 2\,T\,\meas(K_1).
  \end{align*}

  (b)\,\,Let $\zeta = (s,y)\in\R^{1+n}$, and let $K\subseteq\R^{1+N}$ be compact.
  It can be proved (see Remark \ref{sec.two_2:remChangeF}) that the map
  $$H_y:\R^{1+n+p}\longto\R^{1+n+p} \quad H_y(t,x,\eta) :=
  \big(s-t, (x,0)^{-1}\star(y,\eta)\big)$$
  is a smooth diffeomorphism with identically $1$ Jacobian determinant. Therefore
  \begin{align*}
   \int_{K}\Gamma_\G(t,x,0;s,y,\eta)\,\d t\,\d x\,\d\eta
   & = \int_{K}\gamma_\G\big(s-t, (x,0)^{-1}\star(y,\eta)\big)
   \,\d t\,\d x\,\d\eta \\
   & = \int_{H_y^{-1}(K)}\gamma_\G(\tau,z)\,\d\tau\,\d z<\infty,
  \end{align*}
  since $\gamma_\G$ is locally integrable
  and $H_y^{-1}(K)$ is compact.\medskip

  (c)\,\,Let $K\subseteq\R^{1+n}$ be a compact set.
  The map $T(t,x,u) := \big(t,x,\Psi_{x,y}^{-1}(u)\big)$
  is a diffeomorphism of $\R^{1+n+p}$ with Jacobian determinant equal to $1$. Thus
  \begin{align*}
   \int_{K}\Gamma(z;\zeta)\,\d z
   & = \int_{K\times\R^p}\gamma_\G
   \big(s-t,(x,0)^{-1}\star(y,\Psi_{x,y}^{-1}(u))\big)\,\d t
   \,\d x\,\d u
   \\
   & = \int_{K\times\{N\leq 1\}}\{\cdots\}\,\d t
   \,\d x\,\d u
   + \int_{K\times\{N> 1\}}\{\cdots\}\,\d t
   \,\d x\,\d u  =: \mathrm{I} + \mathrm{II},
  \end{align*}
  where $N$ is as in \eqref{sec.two_2:eq_defcanonicalh}.
  \eqref{eq.toprovebestat} implies $\mathrm{I}<\infty$, and
  \eqref{eq.crucialestim} gives $\mathrm{II}<\infty$.
 \end{proof}
  Thanks to Lemma \ref{lem.intgralGammafur},
  we can prove property (viii) of Theorem \ref{mainteo}:
  \begin{proposition}\label{prop.GammainvHtstar}
 For every fixed
  $\varphi\in C^\infty_0(\R^{1+n})$, the function
  \begin{equation*}
   \Lambda_\varphi:\R^{1+n}\longto\R,
   \qquad \Lambda_\varphi(\zeta) := \int_{\R^{1+n}}
   \Gamma(z;\zeta)\,\varphi(z)\,\d z
  \end{equation*}
  is well defined and it satisfies the following properties:
  \begin{itemize}
   \item[\emph{(a)}] $\Lambda_\varphi\in C^\infty(\R^{1+n})$
   and $\Ht(\Lambda_\varphi) = -\varphi$ on $\R^{1+n}$;

   \item[\emph{(b)}] $\Lambda_\varphi(\zeta) \longto 0$ as $\|\zeta\|\to\infty$;

   \item[\emph{(c)}]
  for every $\zeta\in\R^{1+n}$, we have
  \begin{equation*}
   \Lambda_{\Ht\varphi}(\zeta) = \int_{\R^{1+n}}\Gamma(z;\zeta)\,\Ht\varphi(z)
   \,\d z = -\varphi(\zeta).
  \end{equation*}
  \end{itemize}
 \end{proposition}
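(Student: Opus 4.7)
The overall strategy is to reduce everything to the defining property of $\Gamma$ as a global fundamental solution of $\Ht$, together with the already-established dual fact that $\Gamma^*(\zeta;z):=\Gamma(z;\zeta)$ is a global fundamental solution of $\Ht^*$. Well-posedness of $\Lambda_\varphi(\zeta)$ is immediate: since $\varphi \in C_0^\infty(\R^{1+n})$ is bounded with compact support, Lemma \ref{lem.intgralGammafur}-(c) ensures that the integrand $\zeta\mapsto \Gamma(z;\zeta)\varphi(z)$ makes sense (with $\zeta$ fixed outside the pole) and that the integration yields a pointwise defined function on $\R^{1+n}$.

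For property (c), I would apply the dual fundamental-solution identity. Since $\Gamma^*$ is a fundamental solution of $\Ht^*$ (last part of Theorem \ref{mainteo}) and $(\Ht^*)^* = \Ht$, the definition in Definition \ref{sec.one:defi_Green} gives, for every $\zeta \in \R^{1+n}$ and every $\varphi \in C_0^\infty(\R^{1+n})$,
$$\int_{\R^{1+n}} \Gamma(z;\zeta)\,\Ht\varphi(z)\,\d z = \int_{\R^{1+n}} \Gamma^*(\zeta;z)\,(\Ht^*)^*\varphi(z)\,\d z = -\varphi(\zeta),$$
which is exactly (c). For property (b), I would use property (v) of Theorem \ref{mainteo}: letting $K:=\mathrm{supp}(\varphi)$, one has
$$|\Lambda_\varphi(\zeta)| \leq \|\varphi\|_\infty \cdot \meas(K) \cdot \sup_{z\in K}\Gamma(z;\zeta),$$
and the right-hand side tends to $0$ as $\|\zeta\|\to\infty$ by (v).

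For (a), I would first establish the distributional identity $\Ht \Lambda_\varphi = -\varphi$ on $\R^{1+n}$ by a Fubini argument: given any test function $\psi \in C_0^\infty(\R^{1+n})$,
$$\int_{\R^{1+n}} \Lambda_\varphi(\zeta)\,\Ht^*\psi(\zeta)\,\d\zeta = \int_{\R^{1+n}} \varphi(z) \left(\int_{\R^{1+n}} \Gamma(z;\zeta)\,\Ht^*\psi(\zeta)\,\d\zeta\right)\,\d z = -\int_{\R^{1+n}} \varphi(z)\,\psi(z)\,\d z,$$
where the interchange of integrals is justified by the local integrability of $\Gamma$ on $\R^{1+n}\times\R^{1+n}$ provided by Lemma \ref{lem.intgralGammafur}-(a), and the last equality uses the defining property of $\Gamma$ with pole $z$. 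Smoothness of $\Lambda_\varphi$ then follows from the $C^\infty$-hypoellipticity of $\Ht$ (Remark \ref{sec.two_1:remHormander}): the distributional identity forces $\Lambda_\varphi$ to be a.e. equal to a smooth function satisfying $\Ht(\Lambda_\varphi) = -\varphi$ pointwise.

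The main technical obstacle is upgrading this a.e. equality to pointwise equality, so that $\Lambda_\varphi$ \emph{itself} is smooth (and not merely admits a smooth representative). I would handle this by showing continuity of $\Lambda_\varphi$ via dominated convergence, arguing analogously to the proof of Proposition \ref{prop.regGammaHt}-(a): for $\zeta_n\to\zeta_0$, the integrand converges pointwise a.e. on $\mathrm{supp}(\varphi)$ by continuity of $\Gamma$ off the diagonal (Proposition \ref{prop.regGammaHt}-(a)); for domination, I would pass to the representation \eqref{defiGammaesplicitinaPRE1} and exploit the change of variables $\eta = \Psi_{x,y}^{-1}(\eta')$ together with the bound \eqref{eq.crucialestim} of Remark \ref{rem.crucialestim}, obtaining a $\zeta$-uniform integrable majorant on a compact neighborhood of $\zeta_0$. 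Continuity of $\Lambda_\varphi$ then identifies it with its smooth representative everywhere, completing the proof.
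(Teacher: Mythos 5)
Your proof of part (c) is circular within the paper's logical architecture. The fact that $\Gamma^*(\zeta;z):=\Gamma(z;\zeta)$ is a global fundamental solution of $\Ht^*$ is \emph{not} available at this point: in the paper it is precisely Theorem \ref{thm.existGammastar}, whose entire proof is ``this follows immediately from (c) of Proposition \ref{prop.GammainvHtstar}.'' Indeed, unwinding Definition \ref{sec.one:defi_Green} for $\Gamma^*$ and $\Ht^*$ gives \emph{exactly} the identity in (c), so you are assuming the statement you are asked to prove; citing ``the last part of Theorem \ref{mainteo}'' does not help, since Section \ref{sec.propertGamma} is the proof of Theorem \ref{mainteo}. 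The paper's actual route to (c) is different and is the one step of this proposition that requires a new idea: set $u:=\Lambda_{\Ht\varphi}+\varphi$ (note $\Ht\varphi\in C_0^\infty$, so (a) and (b) apply to it); by (a) one has $\Ht u=\Ht\Lambda_{\Ht\varphi}+\Ht\varphi=-\Ht\varphi+\Ht\varphi=0$ on all of $\R^{1+n}$, by (b) and the compact support of $\varphi$ the function $u$ vanishes at infinity, and the Weak Maximum Principle for $\Ht$ (Remark \ref{sec.two_1:remHormander}) then forces $u\equiv 0$. Your proofs of well-posedness, of (b), and of the distributional identity in (a) via Fubini and Lemma \ref{lem.intgralGammafur}-(a) do match the paper.

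A secondary, fixable weakness is your continuity argument for $\Lambda_\varphi$ in (a). You invoke Proposition \ref{prop.regGammaHt}-(a), which concerns continuity of $\Gamma$ \emph{off the diagonal}; but continuity of $\Lambda_\varphi$ must be proved at every $\zeta_0$, including $\zeta_0\in\mathrm{supp}(\varphi)$, where the integrand $z\mapsto\Gamma(z;\zeta_n)\varphi(z)$ has a singularity at $z=\zeta_n$ that \emph{moves with $n$}. The bound \eqref{eq.crucialestim} controls the tail in the lifting variable but blows up like $N(u)^{-Q}$ near $u=0$ (non-integrable there, since $Q>q^*$), so it does not by itself yield a fixed integrable majorant in $z$ dominating a moving singularity. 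The paper sidesteps this by the change of variables $(\tau,u,v)=\bigl(s_n-t,\,C_{y_n}(y,\eta)\bigr)$, which places the singularity of $\gamma_\G$ at a fixed point and allows the clean majorant $\|\varphi\|_\infty\,\gamma_\G(\tau,u,v)\,\chi_{[-T_0,T_0]}(\tau)$, integrable by Theorem \ref{exTheoremC}-(v). You should adopt that device (or an equivalent one) rather than the off-diagonal domination scheme.
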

 \begin{proof}
  By Lemma \ref{lem.intgralGammafur}-(c),
  $\Lambda_\varphi$ is well-defined.   Property (b) is a consequence of Proposition \ref{prop.regGammaHt}-(b).
  By the $C^\infty$-hypoellipticity of $\Ht$,
  (a) will follow if we show that
  $\Lambda_\varphi$ is continuous and
  $\Ht(\Lambda_\varphi) = -\varphi$
  in the sense of distributions.
  To begin with, let $\zeta_n=(s_n,y_n)\to\zeta_0=(s_0,y_0)$.
  Let  $T > 0$ be so large that
  $\mathrm{supp}(\varphi)\subseteq [-T,T]\times\R^n$. We then have
  \begin{align*}
   \Lambda_\varphi(\zeta_n)  &= \int_{s_n-T}^{s_n+T}\int_{\RN}\gamma_\G(\tau,u,v)\,
   \varphi\big(s_n-\tau, C_{y_n}^{-1}(u,v)\big)\,\d\tau\,\d u\,\d v\\
   &=\int_{[-T_0,T_0]\times\RN}
  \gamma_\G(\tau,u,v)\,
   \varphi\big(s_n-\tau, C_{y_n}^{-1}(u,v)\big)\,\d\tau\,\d u\,\d v,
  \end{align*}
  where $C_y$ is as in the proof of Theorem \ref{thm.solCauchyPb}, and
  $T_0 \gg 1$ satisfies
  $$\text{$[s_n-T,s_n+T]\subseteq [-T_0,T_0]$ for any $n$.}$$
  We can now get $\Lambda_\varphi(\zeta_n)\to \Lambda_\varphi(\zeta_0)$
  by a standard dominated convergence argument, based on
  the integrability of $\gamma_\G$ on the strip $[-T_0,T_0]\times\RN$ (see Theorem \ref{exTheoremC}-(v)).
  Finally, $\Ht(\Lambda_\varphi) = -\varphi$
  in $\mathcal{D}'(\R^{1+n})$ is a consequence of the definition
  of fundamental solution (and of Lemma \ref{lem.intgralGammafur}-(a)).\medskip

  We prove (c). We consider $u:= \Lambda_{\Ht\varphi}+\varphi$.
  From property (a), we see that $u$ is smooth and $\Ht$-harmonic on $\R^{1+n}$;
  moreover, from (b) we get that $u$ vanishes at infinity. Since $\Ht$
  satisfies the Weak Maximum Principle on every bounded open set
  (and therefore on space as well; see \cite[Corollary 5.13.7]{BLUlibro}),
  we conclude that $u\equiv 0$ throughout $\R^{1+n}$, as desired.
  \end{proof}
 \begin{theorem}[\textbf{Fundamental Solution for $\Ht^*$}]\label{thm.existGammastar}
 The function $$\Gamma^*(z;\zeta) := \Gamma(\zeta;z)$$
  is a global fundamental solution for the adjoint operator
  $\Ht^* = \LL + \de_t$.
 \end{theorem}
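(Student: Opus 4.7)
The plan is to verify directly the two conditions of Definition \ref{sec.one:defi_Green} for $\Gamma^*$ with respect to the operator $\Ht^*$, with pole $z\in\R^{1+n}$. I would translate each condition back into a statement about $\Gamma$ via $\Gamma^*(z;\zeta)=\Gamma(\zeta;z)$, and then quote results already proved in this section.

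For local integrability, I need $\zeta\mapsto\Gamma^*(z;\zeta)$ in $L^1_{\loc}(\R^{1+n})$ for every fixed $z$; after substitution this is the same as $\zeta\mapsto\Gamma(\zeta;z)\in L^1_{\loc}$, which is precisely Lemma \ref{lem.intgralGammafur}(c) (its ``fixed $\zeta$'' playing the role of our fixed $z$). No new estimate is required.

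For the weak identity, I would first observe that $(\Ht^*)^*=\Ht$: the formal adjoint satisfies $(P^*)^*=P$ identically, and concretely the homogeneity assumption (H.1) forces $\mathrm{div}(X_j)\equiv 0$ (it would be $\dela$-homogeneous of negative degree yet polynomial, hence zero), so that $X_j^*=-X_j$ and $(X_j^2)^*=X_j^2$, confirming $\Ht^*=\sum_j X_j^2+\de_t$ and $(\Ht^*)^*=\Ht$ as stated. The condition to verify then becomes
\[
\int_{\R^{1+n}}\Gamma(\zeta;z)\,\Ht\varphi(\zeta)\,\d\zeta=-\varphi(z),\qquad\forall\,\varphi\in C_0^\infty(\R^{1+n}),
\]
which is exactly the identity in Proposition \ref{prop.GammainvHtstar}(c) after the cosmetic relabeling $z\leftrightarrow\zeta$ of the integration variable and the fixed pole.

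The hard work is therefore imported entirely from Proposition \ref{prop.GammainvHtstar}(c); no new obstacle appears at this stage. The only subtle point in the argument, namely passing from the distributional identity $\Ht\Lambda_\varphi=-\varphi$ to the pointwise identity $\Lambda_{\Ht\varphi}=-\varphi$, was already handled there via hypoellipticity, the vanishing at infinity given by Proposition \ref{prop.regGammaHt}(b), and the Weak Maximum Principle for $\Ht$ on $\R^{1+n}$.
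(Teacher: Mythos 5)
Your proposal is correct and follows essentially the same route as the paper, whose entire proof is the observation that the claim follows from Proposition \ref{prop.GammainvHtstar}(c); you merely make explicit the two points the paper leaves implicit, namely the local integrability of $\Gamma(\cdot;z)$ (Lemma \ref{lem.intgralGammafur}(c)) and the identity $(\Ht^*)^*=\Ht$.
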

  \begin{proof}
  This follows immediately from (c)
  of Proposition \ref{prop.GammainvHtstar}.
 \end{proof}
  We can now prove property (iv) of Theorem \ref{mainteo}.
  \begin{theorem} \label{thm.GammaHtSmooth}
    $\Gamma$ is smooth out of the diagonal
   of $\R^{1+n}\times\R^{1+n}$.
  \end{theorem}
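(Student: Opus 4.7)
My strategy is to differentiate the integral representation \eqref{defiGammaesplicitinaPRE1} directly with respect to the Euclidean coordinates, by iterating Lemma \ref{lemma.derivaaaa} with $Z$ ranging over the coordinate vector fields $\partial_t,\partial_s,\partial_{x_1},\ldots,\partial_{x_n},\partial_{y_1},\ldots,\partial_{y_n}$. Once every mixed partial derivative $\partial^\alpha_{(z,\zeta)}\Gamma$ is shown to exist and to be jointly continuous on the open set $\{z\neq\zeta\}$, the required $C^\infty$-regularity follows at once.

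The key preliminary observations are: \textbf{(a)} each of the coordinate vector fields above is smooth in the $(z,\zeta)$-variables and $E_\lambda$-homogeneous of strictly positive degree (namely $2$ for $\partial_t,\partial_s$, since the $t,s$-components of $E_\lambda$ scale by $\lambda^2$, and $\sigma_i\geq 1$ for $\partial_{x_i}$ and $\partial_{y_i}$); \textbf{(b)} the integrand
\[
  g(z,\zeta,\eta) := \gamma_\G\bigl(s-t,\,(x,0)^{-1}\star(y,\eta)\bigr)
\]
is smooth on $\Omega = \{(z,\zeta,\eta):(z,0)\neq(\zeta,\eta)\}$ (by Theorem \ref{exTheoremC} together with the smoothness of the convolution-like map $F$ of Remark \ref{sec.two_2:remChangeF}) and is $E_\lambda$-homogeneous of degree $-Q=-(q+q^*)$, which is strictly less than $-q^*$ since $q>0$. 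These are precisely the hypotheses of Lemma \ref{lemma.derivaaaa}.

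Iterating Lemma \ref{lemma.derivaaaa}-(2) with $Z$ ranging over the coordinate vector fields then yields, for every multi-index $\alpha$ in the $(z,\zeta)$-variables,
\[
  \partial^\alpha_{(z,\zeta)}\Gamma(z;\zeta) = \int_{\R^p}\bigl(\partial^\alpha_{(z,\zeta)}g\bigr)(z,\zeta,\eta)\,\d\eta \qquad (z\neq\zeta),
\]
where at each step the new integrand remains smooth on $\Omega$ and $E_\lambda$-homogeneous of some degree strictly less than $-q^*$; the absolute convergence is guaranteed by Lemma \ref{lemma.derivaaaa}-(1) and the passage under the integral sign by Lemma \ref{lemma.derivaaaa}-(2). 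The joint continuity of each $\partial^\alpha\Gamma$ on $\{z\neq\zeta\}$ then follows by a dominated-convergence argument of exactly the type employed in Proposition \ref{prop.regGammaHt}-(a): one performs the change of variable $\eta\mapsto\Psi_{x,y}^{-1}(\eta')$ of Remark \ref{sec.two_2:remChangeF} to normalise the singular set, exploits the smoothness of $\partial^\alpha g$ off the origin of $\R^{1+N}$, and dominates the integrand on $\{N(\eta')>1\}$ by a constant multiple of a suitable negative power of $N(\eta')$ via the $E_\lambda$-homogeneity (exactly as in \eqref{eq.crucialestim} for $\gamma_\G$ itself). Since $\alpha$ is arbitrary, $\Gamma\in C^\infty(\{z\neq\zeta\})$.

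\textbf{Main obstacle.} The principal effort is the book-keeping needed to check that $g$ and all its iterated coordinate derivatives remain smooth on $\Omega$ and $E_\lambda$-homogeneous of the correct negative degree; this is routine given the structure of Theorem \ref{exTheoremC}, but the delicate region is the boundary hypersurface $\{s=t,\,x\neq y\}$ of the diagonal, where the first coordinate $s-t$ of the argument of $\gamma_\G$ changes sign. The estimate \eqref{eq.crucialestim}, being insensitive to the sign of $s-t$ (and valid for every $(t,x)\in\R^{1+n}$, see Remark \ref{rem.crucialestim}), is the crucial ingredient that makes the dominated-convergence step uniform across this hypersurface.
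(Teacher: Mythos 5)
Your proposal is correct, but it follows a genuinely different route from the paper. The paper's own proof is a soft hypoellipticity argument on the product space: it forms the H\"ormander operator $Q:=\sum_j X_j^2(x)+\de_t+\sum_j X_j^2(y)-\de_s$ on $\R^{1+n}\times\R^{1+n}$, observes that $Q\Gamma=0$ off the diagonal (using that $\Gamma(z;\cdot)$ is $\Ht$-harmonic by Corollary \ref{cor.GammaHtSmooth} and that $\Gamma(\cdot;\zeta)=\Gamma^*(\zeta;\cdot)$ is $\Ht^*$-harmonic by Theorem \ref{thm.existGammastar}), and then concludes from the $C^\infty$-hypoellipticity of $Q$ together with the continuity of $\Gamma$ off the diagonal. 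You instead differentiate the representation \eqref{defiGammaesplicitinaPRE1} under the integral sign in all Euclidean coordinate directions, iterating Lemma \ref{lemma.derivaaaa}; this is legitimate, since the lemma is stated for an arbitrary smooth $E_\lambda$-homogeneous vector field of nonnegative degree (so $\de_t,\de_s$ of degree $2$ and $\de_{x_i},\de_{y_i}$ of degree $\sigma_i\geq 1$ all qualify), the integrand $g$ is smooth exactly on $\Omega=\{(z,0)\neq(\zeta,\eta)\}$ and $E_\lambda$-homogeneous of degree $-Q<-q^*$ because $D_\lambda$ is an automorphism of $\G$, and each coordinate derivative lowers the homogeneity degree further. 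Your approach is essentially the one the paper itself uses in Section \ref{sec.DERIVPbHT} for Theorem \ref{teomainderiv}, pushed to all coordinate directions rather than only the $X_j$'s and $\de_t,\de_s$; it is more computational but yields explicit integral formulas for every Euclidean derivative as a by-product, whereas the paper's argument is shorter, avoids all homogeneity bookkeeping, but relies on H\"ormander's theorem for the doubled operator $Q$ and on already having identified $\Gamma^*$ as a fundamental solution of $\Ht^*$. One small remark: for the domination of the derivatives $\de^\alpha g$ you do not need (and do not have) Gaussian estimates in the spirit of \eqref{eq.crucialestim}; the compactness-plus-scaling bound $|\de^\alpha g(z,\zeta,\eta)|\leq c\,N(\eta)^{\deg}$ already built into the proof of Lemma \ref{lemma.derivaaaa} is what actually does the work, and it suffices.
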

  \begin{proof}
   We consider the PDO on $\R^{1+n}\times\R^{1+n}$ defined by
   $$Q:= \sum_{j = 1}^m {X}_j^2(x) + \de_t
   + \sum_{j = 1}^m {X}_j^2(y) - \de_s,$$
   where $x,y\in\R^n$ and $t,s\in\R$.
   Obviously, $Q$ is a H\"ormander operator on $\R^{1+n}\times\R^{1+n}$ since this
   is true of $\sum_{j = 1}^m {X}_j^2$ on $\R^n$. By Theorem \ref{thm.GammaHtSmooth}
   we deduce that, for any $(t,x)\neq (s,y)$, one has
\begin{align*}
   Q\big(\Gamma(t,x;s,y)\big) &=
   \Ht^*\Big((t,x)\mapsto \Gamma(t,x;s,y)\Big)+ \Ht\Big((s,y)\mapsto \Gamma(t,x;s,y)\Big)\\
   &=\Ht^*\Big((t,x)\mapsto \Gamma^*(s,y;t,x)\Big)+ \Ht\Big((s,y)\mapsto \Gamma(t,x;s,y)\Big)=0.
\end{align*}
 The $C^\infty$-hypoellipticity of $Q$ and the continuity of $\Gamma$ out of the diagonal
 prove the thesis.
  \end{proof}
  The next result establishes the second part of property (ii) of Theorem \ref{mainteo}.
  \begin{theorem}\label{thm.symmetryGammax}
  For every $(t,x), (s,y)\in\R^{1+n}$ we have
   $$\Gamma(t,x;s,y) = \Gamma(t,y;s,x).$$
  \end{theorem}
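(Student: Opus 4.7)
The plan is to prove the spatial symmetry by a direct computation based on the integral formula \eqref{defiGammaesplicitinaPRE1} for $\Gamma$, exploiting two ingredients that are essentially already available in the paper: the symmetry $\gamma_\G(\tau,\cdot)=\gamma_\G(\tau,(\cdot)^{-1})$ established in Theorem \ref{exTheoremC}-(ii), and the change of variables $\phi_{x,y}$ constructed in Step II of the proof of Theorem \ref{teomainderiv}.

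Concretely, I would argue as follows. Starting from the defining formula
\[
\Gamma(t,x;s,y)=\int_{\R^p}\gamma_\G\big(s-t,(x,0)^{-1}\star(y,\eta)\big)\,\d\eta,
\]
I would first apply Theorem \ref{exTheoremC}-(ii), namely $\gamma_\G(\tau,g)=\gamma_\G(\tau,g^{-1})$ for every $g\in\G$, to rewrite the integrand as $\gamma_\G(s-t,(y,\eta)^{-1}\star(x,0))$. Thus
\[
\Gamma(t,x;s,y)=\int_{\R^p}\gamma_\G\big(s-t,(y,\eta)^{-1}\star(x,0)\big)\,\d\eta.
\]

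Next I would swap the roles of $x$ and $y$ in the diffeomorphism introduced in Step II of the proof of Theorem \ref{teomainderiv}: define
\[
\phi_{y,x}:\R^p\to\R^p,\qquad \phi_{y,x}(u)=\pi_p\big((y,0)\star(y,u)^{-1}\star(x,0)\big),
\]
which is a smooth polynomial diffeomorphism with $|\det \mathcal{J}_{\phi_{y,x}}(u)|\equiv 1$ and satisfies the key identity
\[
(y,0)^{-1}\star(x,\phi_{y,x}(u))=(y,u)^{-1}\star(x,0),\qquad \forall\,u\in\R^p.
\]
Applying this identity with $u=\eta$ inside the integral and then performing the change of variable $\eta'=\phi_{y,x}(\eta)$ (legitimate since the Jacobian determinant is $1$), I obtain
\[
\Gamma(t,x;s,y)=\int_{\R^p}\gamma_\G\big(s-t,(y,0)^{-1}\star(x,\eta')\big)\,\d\eta'=\Gamma(t,y;s,x),
\]
which is the required identity.

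The core mechanism, then, is simply the combination of the group-theoretic symmetry of $\gamma_\G$ with the measure-preserving diffeomorphism $\phi_{y,x}$; no further analysis (dominated convergence, hypoellipticity, maximum principle) is needed. The only delicate point is the verification of the key identity $(y,0)^{-1}\star(x,\phi_{y,x}(u))=(y,u)^{-1}\star(x,0)$ and of the unit Jacobian, but these have already been justified in Step II of the proof of Theorem \ref{teomainderiv} (with the roles of $x,y$ exchanged), so the argument reduces to invoking that construction.
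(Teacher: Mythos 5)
Your proof is correct, but it follows a genuinely different route from the paper's. The paper proves the spatial symmetry indirectly: it first shows (Step I of its proof) that $G(t,x;s,y):=\Gamma(t,y;s,x)$ is itself a global fundamental solution for $\Ht$ — using Theorem \ref{thm.existGammastar} ($\Gamma^*$ is a fundamental solution of $\Ht^*$) together with the time-translation identity $\Gamma(t,x;s,y)=\Gamma(0,x;s-t,y)$ — then checks (Step II) that $G(z;\cdot)$ is continuous off the pole and vanishes at infinity, and concludes $G\equiv\Gamma$ by the uniqueness statement of Remark \ref{rem.onFSgeneral}-(c), which rests on hypoellipticity and the Weak Maximum Principle. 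Your argument is instead a direct computation: the symmetry $\gamma_\G(\tau,g)=\gamma_\G(\tau,g^{-1})$ of Theorem \ref{exTheoremC}-(ii) combined with the measure-preserving diffeomorphism $\phi_{y,x}$ and the identity $(y,0)^{-1}\star(x,\phi_{y,x}(u))=(y,u)^{-1}\star(x,0)$ turns $\int_{\R^p}\gamma_\G\bigl(s-t,(x,0)^{-1}\star(y,\eta)\bigr)\,\d\eta$ into $\int_{\R^p}\gamma_\G\bigl(s-t,(y,0)^{-1}\star(x,\eta')\bigr)\,\d\eta'=\Gamma(t,y;s,x)$. In fact this is exactly the content of the paper's own formula \eqref{eq.startingnewGamma} in Step II of the proof of Theorem \ref{teomainderiv}, whose right-hand side is $\Gamma(t,y;s,x)$ by definition — so your observation shows the symmetry is already implicit there, and the paper's separate soft argument is, strictly speaking, redundant once the properties of $\phi_{x,y}$ (unit Jacobian and the key identity, justified via the explicit group construction in \cite{BiagiBonfiglioliLast}) are granted. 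What each approach buys: yours is shorter, needs no dominated convergence, hypoellipticity, or maximum principle, and since the integrand is nonnegative the change of variables is unconditionally valid; the paper's is "softer" and would survive in settings where the explicit algebraic structure of $\star$ and the diffeomorphism $\phi_{x,y}$ are not available, which fits the authors' stated preference for arguments not tied to homogeneity. The only point you should make explicit is that the properties of $\phi_{y,x}$ are obtained from those of $\phi_{x,y}$ by relabeling, and that these rest on the cited external construction rather than on anything proved in this paper.
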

  \begin{proof}
   To ease the reading, we split the proof into two steps.
   \medskip

   \textsc{Step I:}\,\,We first prove that the function
   $G$ defined by
   $$ G(t,x;s,y) := \Gamma(t,y;s,x)$$
  is a global fundamental solution for $\Ht$, i.e.,
  for every fixed $z=(t,x)\in\R^{1+n}$,
  \begin{itemize}
   \item[(a)] $G(z;\cdot)\in L^1_{\loc}(\R^{1+n})$;
   \item[(b)] $\Ht G(z;\cdot) = -\mathrm{Dir}_z$ in $\mathcal{D}'(\R^{1+n})$.
  \end{itemize}
  As for assertion (a), let
  $K\subseteq\R^{1+n}$ be a compact set and let $T> 0$ be such that
  $K\subseteq [-T,T]\times\overline{B(0,T)} =: C(T)$.
  Since $\Gamma \geq 0$ and $\Gamma(\cdot;\zeta)\in L^1_{\loc}(\R^{1+n})$
  for every $\zeta\in\R^{1+n}$, one then has
  \begin{align*}
   \int_{K}G(t,x;s,y)\,\d s\,\d y &
   \leq \int_{C(T)}\Gamma(t-s,y;0,x)\,\d s\,\d y \\
   & = \int_{t-T}^{t+T}\int_{\overline{B(0,T)}}
   \Gamma(\tau,y;0,x)\,\d\tau\,\d y < \infty.
  \end{align*}
  We now turn to prove assertion (b). To this end,
  let $\varphi\in C^\infty_0(\R^{1+n})$
  and let $\psi(s,y) := \varphi(-s,y)$.
  Since $\Gamma^*(w;\zeta) = \Gamma(\zeta;w)$ is a global fundamental
  solution for $\Ht^*$ (see Theorem \ref{thm.existGammastar}),
  we have
  \begin{align*}
   -\varphi(t,x) & = -\psi(-t,x)
   = \int_{\R^{1+n}}\Gamma(s,y;-t,x)\,\Ht\psi(s,y)\,\d s\,\d y \\
   &=
   \int_{\R^{1+n}}\Gamma(0,y;-t-s,x)\,\Ht\psi(s,y)\,\d s\,\d y \\
   & = \int_{\R^{1+n}}\Gamma(0,y;-t+\tau,x)\,
   \Ht\psi(-\tau,y)\,\d\tau\,\d y \\
   & \big(\text{since $(\Ht\psi)(-\tau,y) = \Ht^*\varphi(\tau,y)$}\big) \\
   & = \int_{\R^{1+n}}\Gamma(t,y;\tau,x)\,
   \Ht^*\varphi(\tau,y)\,\d\tau\,\d y \\
   & = \int_{\R^{1+n}}G(t,x;\tau,y)\,\Ht^*\varphi(\tau,y)\,\d\tau\,\d y,
   \end{align*}
   and this proves that $\Ht G(z;\cdot) = -\mathrm{Dir}_z$
   in $\mathcal{D}'(\R^{1+n})$, as desired. \medskip

   \textsc{Step II:}\,\,In this step we show that,
   for very $z=(t,x)\in\R^{1+n}$, one has
   $$\text{$G(z;\cdot)\in C(\R^{1+n}\setminus\{z\})$\,\,
   and\,\,$G(z;\zeta)\to 0$ as $\|\zeta\|\to\infty$.}$$
   On the one hand, the continuity of $G(z;\cdot)$ out of $z$
   is a direct consequence of the continuity of
   $\Gamma$ out of the diagonal;
   on the other hand, since $\Gamma(\cdot;\zeta)$
   vanishes at infinity,
   we have $$G(t,x;s,y) = \Gamma(t,y;s,x) = \Gamma(t-s,y;0,x)\longrightarrow 0,\quad \text{as $\|(s,y)\|\to \infty$.}
   $$
 Due to the uniqueness of $\Gamma$, this ends the proof.
   \end{proof}
 The next fact proves what remains to be proved of (v) in Theorem \ref{mainteo}.
  \begin{remark}\label{rem.Gammavanishtwoside}
 (1) In view of $\Gamma(t,x;s,y)=\Gamma(-s,x;-t,y)$ and the symmetry of $\Gamma$ in $x\leftrightarrow y$,
    we recognize that, for every compact set $K\subseteq\R^{1+n}$,
    $$\lim_{\|\zeta\|\to\infty}\big(\sup_{z\in K}\Gamma(\zeta;z)\big)
    =\lim_{\|\zeta\|\to\infty}\big(\sup_{z\in K}\Gamma(z;\zeta)\big) = 0.$$
    Here we used (b) of Proposition \ref{prop.regGammaHt}.\medskip

    (2)
  By Theorem \ref{thm.symmetryGammax}, it is not difficult to prove the following identity
  \begin{equation*}
   \Gamma^*(t,x;s,y) = \int_{\R^p}\Gamma_\G^*(t,x,0;s,y,\eta)\,\d\eta
  \end{equation*}
  (where $\Gamma_\G^*$ is the fundamental solution of $\Ht_\G^* = \LL_\G+\de_t$ on $\G$)
  which shows that $\Gamma^*_\G$ lifts $\Gamma^*$.\medskip

  Furthermore, by the same tricks as above,  $\Gamma^*$ satisfies
  the dual statement of Proposition \ref{prop.GammainvHtstar}, that is,
  for every $\varphi\in C^\infty_0(\R^{1+n})$, the function
  $\Lambda^*_\varphi$ defined by
  $$\Lambda^*_\varphi(\zeta) := \int_{\R^{1+n}}\Gamma^*(z,\zeta)
  \,\varphi(z)\,\d z,
  \qquad \zeta\in\R^{1+n},$$
  is well-defined and it satisfies the following properties:
 $\Lambda^*_\varphi\in C^\infty(\R^{1+n})$
   and $\Ht^*(\Lambda^*_\varphi) = -\varphi$ point-wise on $\R^{1+n}$;
   $\Lambda^*_\varphi(\zeta) \to 0$ as $\|\zeta\|\to\infty$.
\end{remark}
 Finally, the next proposition proves (x) in Theorem \ref{mainteo}.
 \begin{proposition}
 \label{prop.reproductionGamma}
 For every $x,y\in\R^n$ and every $s,t > 0$, we have the following so-called
 Re\-pro\-duction Identity:
 \begin{equation} \label{eq.reproductionGamma}
  \Gamma(0,y;t+s,x) = \int_{\R^n}\Gamma(0,w;t,x)\,\Gamma(0,y;s,w)\,\d w.
 \end{equation}
 \end{proposition}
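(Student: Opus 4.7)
The plan is to deduce the reproduction identity from the uniqueness statement for the bounded Cauchy problem (Proposition \ref{prop.uniquesol}, equivalently part (ix) of Theorem \ref{mainteo}). Fix $y\in\R^n$ and $s>0$, and set $\varphi(w):=\Gamma(0,y;s,w)$. First I would verify that $\varphi$ is an admissible initial datum: since $s>0$, the point $(0,y)$ lies off the diagonal with every $(s,w)$, so by property (iv) the function $\varphi$ is smooth on $\R^n$; by property (v) applied to the compact set $K=\{(0,y)\}$, we have $\Gamma(0,y;s,w)\to 0$ as $\|w\|\to\infty$, and continuity on compact $w$-sets then yields $\varphi\in C(\R^n)\cap L^\infty(\R^n)$.

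Next, I would set up two candidate bounded classical solutions of the Cauchy problem on $\Omega=(0,\infty)\times\R^n$ with initial datum $\varphi$:
\begin{equation*}
u(t,x):=\int_{\R^n}\Gamma(0,w;t,x)\,\varphi(w)\,\d w,\qquad v(t,x):=\Gamma(0,y;t+s,x).
\end{equation*}
By property (ix) already established, $u$ is \emph{the} unique bounded classical solution of the Cauchy problem with datum $\varphi$, and moreover $\sup_\Omega|u|\le\sup_{\R^n}|\varphi|$. To handle $v$, I would check the four defining features of a bounded classical solution: (a) smoothness on $\Omega$ together with $\Ht v=0$ there, which follows from Corollary \ref{cor.GammaHtSmooth} applied with pole $(0,y)$ (since $t+s>0$, we are always off the pole); (b) continuity of $v$ up to $\overline{\Omega}$ with $v(0,x)=\Gamma(0,y;s,x)=\varphi(x)$, which comes from property (iv); and (c) global boundedness of $v$ on $\Omega$. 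For (c), using the representation \eqref{defiGammaesplicitinaPRE1} together with the Gaussian upper bound \eqref{eq.Gaussest} for $\gamma_\G$ and the change of variable $\eta=\Psi_{y,x}^{-1}(u)$ of Remark \ref{sec.two_2:remChangeF} (exactly as in the proof of Theorem \ref{thm.Gammaint}), one gets
\begin{equation*}
 0\le\Gamma(0,y;t+s,x)\le C\,(t+s)^{-Q/2}\int_{\R^p}\exp\!\bigl(-\alpha N^2(u)/(\mathbf{c}(t+s))\bigr)\,\d u\le C'(t+s)^{-q/2}\le C' s^{-q/2},
\end{equation*}
uniformly in $(t,x)\in\Omega$.

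Once both candidates are shown to be bounded classical solutions of the same Cauchy problem, the uniqueness result of Proposition \ref{prop.uniquesol} forces $u\equiv v$ on $\Omega$, which is exactly \eqref{eq.reproductionGamma}. The main (mildly) technical point is the verification of the global boundedness of $v$, since the properties (i)--(x) do not contain a ready-made $L^\infty$-estimate for $\Gamma$; one has to descend to the lifted Gaussian estimate \eqref{eq.Gaussest} and a saturation-type argument in the $\eta$-variable to produce the required $(t+s)^{-q/2}$ decay. Every other step is a bookkeeping verification that $v$ inherits the regularity and initial trace of $\Gamma$, which follows directly from the properties already established in this section.
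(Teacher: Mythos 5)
Your proposal is correct and follows essentially the same route as the paper: identify both sides of \eqref{eq.reproductionGamma} as bounded classical solutions of the Cauchy problem with datum $\varphi(w)=\Gamma(0,y;s,w)$ and invoke the uniqueness from Theorem \ref{thm.solCauchyPb}/Proposition \ref{prop.uniquesol}. The only (harmless) divergence is in checking boundedness of $v(t,x)=\Gamma(0,y;t+s,x)$: the paper gets it directly from the continuity of $\Gamma(0,y;\cdot)$ off the pole together with its vanishing at infinity, whereas you descend to the lifted Gaussian estimate \eqref{eq.Gaussest} and a scaling argument, which is more work but also yields the quantitative bound $C's^{-q/2}$.
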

 \begin{proof}
  We fix a point $(s,y)\in(0,\infty)\times\R^n$ and we define
  $\varphi_{s,y}(w) := \Gamma(0,y;s,w)$.
  Since $\Gamma(0,y;\cdot)$ is smooth out of $(0,y)$ and since $s > 0$,
  it is immediate to check that  $\varphi_{s,y}\in C^\infty(\RN,\R)$; moreover,
  since $\Gamma(0,y;\cdot)$ vanishes at infinity, we see that $\varphi_{s,y}$ is also
  bounded on $\R^N$. Thus, Theorem \ref{thm.solCauchyPb} implies that
  $$u(t,x) :=  \int_{\R^n}
  \Gamma(0,w;t,x)\,\varphi_{s,y}(w)\,\d w=
  \int_{\R^n}\Gamma(0,w;t,x)\,\Gamma(0,y;s,w)\,\d w $$
  is the unique bounded solution of the Cauchy problem
  $$
     \Ht u = 0 \quad \text{in $\Omega = (0,\infty)\times\R^n$,}\qquad
     u(0,x) = \Gamma(0,y;s,x) \quad \text{for $x\in\R^n$.} $$
 We now claim that the function $\overline{\Omega}\ni(t,x)\mapsto v(t,x) := \Gamma(0,y;t+s,x)$ is also a bounded
 solution of the same Cauchy problem. Indeed, since $s > 0$ is fixed, Corollary \ref{cor.GammaHtSmooth} shows
 that $v\in C^\infty(\Omega,\R)$ and that $\Ht v = 0$ on $\Omega$; moreover,
 since $\Gamma(0,y;\cdot)$ vanishes at infinity, we deduce that $v$ is bounded on $\Omega$.
 Since, obviously, $v(0,x) = \Gamma(0,y;s,x)$, we then conclude that $v \equiv u$ on
 the whole of $\Omega$, and the Reproduction
 Identity \eqref{eq.reproductionGamma} follows.
 \end{proof}	
 \bigskip

\textbf{Acknowledgements:}
 We wish to thank Marco Bramanti for useful discussions. Some of the results of this paper have been presented
 by the first-named author during the Conference ``New trends in PDEs'' (May 29-30, 2018 - University of Catania, Italy).



\end{document}